\newtheorem{Def}{Definition}[section]
\newtheorem{Prop}[Def]{Proposition}
\newtheorem{Lem}[Def]{Lemma}
\newtheorem{Thm}[Def]{Theorem}
\theoremstyle{definition}
\newtheorem{Rem}[Def]{Remark}
\newcommand{\mS}{\mathcal{S}}
\newcommand{\bmS}{\bar{\mathcal{S}}}
\newcommand{\bE}{\mathbb{E}}
\newcommand{\sm}{\sigma}
\begin{document}
\title{
Tamed-adaptive Euler-Maruyama approximation for  SDEs with superlinearly growing and piecewise continuous drift, superlinearly growing and locally H\"older continuous diffusion}

\author{Minh-Thang Do\footnote{Institute of Mathematics, Vietnam Academy of Science and Technology. Address: 18 Hoang Quoc Viet, Cau Giay, Hanoi, Vietnam. Email: dmthang@math.ac.vn}, \quad Hoang-Long Ngo\footnote{(Corresponding Author) Hanoi National University of Education. Address: 136 Xuan Thuy, Cau Giay, Hanoi, Vietnam. Email: ngolong@hnue.edu.vn}, \quad Nhat-An Pho\footnote{Hanoi National University of Education. Address: 136 Xuan Thuy, Cau Giay, Hanoi, Vietnam. Email: anpn@hnue.edu.vn}}
\maketitle 
	\begin{abstract}
   In this paper, we consider stochastic differential equations whose drift coefficient is superlinearly growing and piece-wise continuous, and whose diffusion coefficient is superlinearly growing and locally H\"older continuous. We first prove the existence and uniqueness of the solution to such stochastic differential equations and then propose a tamed-adaptive Euler-Maruyama approximation scheme.  We study the rate of convergence in the $L^1$-norm of the scheme in both finite and infinite time intervals.   
\end{abstract}

 \textbf{Keywords:} Adaptive Euler-Maruyama; Discontinuous drift; Locally H\"older continuous diffusion; Stochastic differential equations; Tamed Euler-Maruyama; Uniform in time approximation. 

\textbf{Mathematics Subject Classification:} 60H35,  60H10

\section{Introduction}

Stochastic differential equations (SDEs) are used to model a variety of random dynamical phenomena;
therefore, they play a significant role in many fields of science and industry. In many cases, the
solution to many SDEs cannot be computed explicitly and has to be approximated by a numerical scheme.

In this paper, we consider the numerical approximation for  stochastic processes $X=(X_t)_{t \in [0,+\infty)}$ defined by the following stochastic differential equation (SDE),
	\begin{equation}\label{eqn1.1}
		\begin{cases} 
			& d X_t=b(X_t)dt+\sigma(X_t)dW_t, \\
			& X_0=x_0 \in \mathbb R,
		\end{cases}
	\end{equation}
	where  $W= (W_t)_{t\in [0,+\infty)}$ is a standard Brownian motion defined on a filtered probability space  $(\Omega, \mathcal{F},(\mathcal{F}_t), \mathbb P)$ satisfying the usual condition.

For SDEs with Lipschitz continuous coefficients, it is well-known that the explicit Euler-Maruyama approximation scheme converges at the strong rate of order $\frac{1}{2}$ (see \cite{KP, MT}). 
The numerical approximation for SDEs with irregular coefficients has recently been considered extensively. The divergence of the classical Euler-Maruyama scheme when applying for SDEs with super-linear growth coefficients has been pointed out in \cite{HJKa}. Thereafter, many modified Euler-Maruyama schemes have been introduced for SDEs with super-linear growth coefficients, such as the tamed Euler-Maruyama scheme (see \cite{HJ, HJKa, S1, S2}), the truncated Euler-Maruyama scheme (see \cite{Mao2}), the implicit Euler-Maruyama scheme (see \cite{MS}), and the adaptive scheme (see \cite{FG}). 
The convergence of the Euler-Maruyama scheme for SDEs with H\"older continuous diffusion coefficients was first studied in \cite{GR} by using the Yamada-Watanabe approximation technique. This technique has been developed in  \cite{NT16b, NT2017} to study the strong convergence of the tamed Euler-Maruyama scheme for SDEs with super-linear growth coefficients. 
 
There are several approaches to study numerical approximation for SDEs with discontinuous drift coefficient. The first approach is to transform an SDE with discontinuous drift into a new SDE with Lipschitz continuous coefficients. Using this transformation technique, Leobacher and Sz\"olgyenyi  developed an  approximation method for SDEs with piecewise Lipschitz continuous drift and global Lipschitz continuous diffusion (see \cite{LeoSzo, LS18}). Thereafter,  the transformation technique was applied to study a quasi-Milstein scheme for a smaller family of SDEs, i.e. those with coefficients having piecewise continuous Lipschitz derivatives (see \cite{MY22, MY}). 
The second approach is to use a Gaussian bound for the density of the Euler-Maruyama  approximated solution, which was proven in \cite{LM10}. Using this Gaussian bound, in \cite{NT16b}, Ngo and Taguchi showed that for SDEs 
the Euler-Maruyama approximation with uniformly elliptic, bounded, $(1/2 + \alpha)$-H\"older diffusion coefficient and one-sided Lipschitz, bounded variation drift coefficient, the Euler-Maruyama strongly converges in $L^1$-norm at the order of $\alpha \in (0, 1/2]$. In \cite{NT2017}, by introducing a drift removal transformation technique, they can get rid of the one-sided Lipschitz continuous condition of drift for one-dimensional SDEs (see also \cite{BHY19}). 
The third approach is to use adaptive schemes (see \cite{Larisa, NSS}). The basic idea is that the step size of the scheme will be adjusted to be smaller near the discontinuous points of the drift coefficient. 
The fourth approach is to use the regularisation of the noise. In \cite{DG}, by exploiting the regularising effect of the noise, the authors showed that the rate of strong convergence of the Euler-Maruyama is arbitrarily close to $1/2$ when applying for a larger class of non-degenerated SDEs with irregular drift. The fifth approach is to use the stochastic sewing lemma. In \cite{KLe, BDG}, they studied a tamed Euler-Maruyama scheme of strong order $1/2$  for a class of SDEs with an integrable drift coefficient and elliptic regular diffusion coefficient. The approximation for SDEs with superlinearly growing coefficients, locally Lipschitz diffusion, and piece-wise continuous drift, has been studied very recently in \cite{MSY22, HG22}.

So far, all the numerical studies for SDEs with discontinuous drift have only considered the approximation in a finite time interval. For infinite time intervals, Fang and Giles (see \cite{FG}) developed an adaptive Euler-Maruyama scheme for a class of SDEs with a polynomial growth Lipschitz continuous drift, and bounded, globally Lipschitz continuous diffusion. Using the method of Lyapunov functions, they proved that the proposed scheme converges in $L^p$-norm at the rate of order $1/2$. To extend this result for a larger class of SDEs, i.e. those with locally Lipschitz continuous and one-sided Lipschitz drift and locally $(\alpha+1/2)$-H\"older continuous diffusion coefficients, Kieu et. al. (see \cite{LTT}) established a tamed-adaptive Euler-Maruyama approximation scheme,  utilized the Yamada-Watanabe approximation to obtain upper bounds for some $p^{th}$ moments of the approximated solution, and then showed that the scheme converges in $L^1$-norm at the rate of order $\alpha \in (0, 1/2]$.

In this paper, we are interested in the uniformly in time numerical approximation for the SDEs of the form \eqref{eqn1.1}, where $b$ is superlinearly growing and piecewise locally Lipschitz continuous,  and $\sigma$ is superlinerly growing and locally $(\alpha+1/2)$-H\"older continuous. By combining and improving the tamed and adaptive models in the literature, we introduce a tamed-adaptive Euler-Maruyama approximation scheme that converges in both finite and infinite time intervals. The scheme is proved to converge in $L^1$-norm at the rate of order $\alpha$, and, specifically, under some condition on the growths of $b$ and $\sigma$, the scheme is proved to converge in infinite time intervals. The proof is based on an enhancement from the techniques introduced by Kieu et. al. \cite{LTT}, and Yaroslavtseva \cite{Larisa}, and an introduction of a new function $\varphi$ to control the discontinuity of drift coefficient, which is the most novel technical contribution of this work. To the best of our knowledge, this is the first approximation scheme for SDEs with superlinearly growing and piecewise continuous drift, superlinearly growing, and locally H\"older continuous diffusion. 

The rest of this paper is structured as follows. In Section \ref{Sec:2}, we first introduce a list of assumptions on the coefficients $b$ and $\sigma$, and state theorems on the existence and uniqueness of solution for some classes of SDEs with superlinearly growing and piecewise continuous drift, superlinearly growing and locally H\"older continuous diffusion. Next, we introduce the tamed-adaptive Euler-Maruyama approximation scheme and state our main results on the rate of convergence of this scheme in both finite and infinite time intervals. All the proofs are deferred to Section \ref{Sec:3}. In Section \ref{Sec:4}, we present a numerical experiment to illustrate the performance of the new scheme for various SDEs. 

	\section{Tame-adaptive Euler-Maruyama approximation}
	\label{Sec:2} 
	\subsection{Existence and uniqueness of solution} 
	\label{sec:eus}
	
		We first introduce some assumptions on the coefficients of equation \eqref{eqn1.1}. 
	\begin{itemize}

	\item[(A1)] There exist constants $\gamma, \eta \in \mathbb R$ and $p_0 \in [2,+\infty)$ such that
    $$xb(x)+\frac{p_0-1}{2}|\sigma(x)|^2\leq \gamma x^2+\eta,\text{ for any } x \in \mathbb R.$$

    \item[(A2)] There exists a constant $L_1$ such that
    $$(x-y)(b(x)-b(y))\leq L_1(x-y)^2,\text{ for any } x,y \in \mathbb R.$$

\item [(A3)] There exist a sequence $\xi_1<\xi_2<\ldots<\xi_k$,  
 some constants $L_2>0$ and $ l \geq 1$ such that for any  $(x,y)\in \mathcal{S} :=  \cup_{i=1}^{k-1} (\xi_i,\xi_{i+1})^2 \cup (-\infty, \xi_1)^2 \cup (\xi_k, +\infty)^2 $,
	$$|b(x)-b(y)|\leq L_2(1+|x|^l+|y|^l)|x-y|.$$

    \item[(A4)] There exist constants  $m\geq 1, L_3 \geq 0$ and $\alpha \in [0,\frac 12]$ such that
    $$|\sigma(x)-\sigma(y)|\leq L_3(1+|x|^m+|y|^m)|x-y|^{\alpha+1/2}, \text{   for any } x,y \in \mathbb R.$$

    \item[(A5)] The function $\sigma$ satisfies $\sigma(\xi_i)>0$ for all $i=1,\ldots,k$,  where $(\xi_i)$ is the sequence defined in (A3). 
    
    \item [(A6)] There exist constants $L_4,h,\xi'>0$ such that
    $$\sup\limits _{|x-y|\leq h,|x|\geq \xi'}\frac{\sigma(x)}{\sigma(y)}\leq L_4.$$
    
     \item[(A'2)] There exists a constant $L_1$ such that
    $$(x-y)(b(x)-b(y))\leq L_1 (x-y)^2,$$ for any $(x,y) \in (-\infty,\xi_1)^2 \cup  (\xi_k,+\infty)^2$, where $\xi_1, \xi_k$ are defined in Assumption (A3). 

     \item [(A'5)] There exists a positive constant $\kappa$ such that
    $\sigma(x)\geq \kappa$ for any $ x \in \mathbb R.$
\end{itemize}
	
\begin{Rem}\label{rm2.1} It can be seen that
    \begin{itemize} 
    \item Assumption (A3) implies that there exists a constant $L'_2>0$ such that 
	$$|b(x)-b(y)|\leq L'_2+L'_2(1+|x|^l+|y|^l)|x-y|,
	\text{ for any } x,y \in \mathbb R.$$
 \item Assumption (A'5) implies Assumption (A5); Assumption (A2) implies Assumption (A'2). 
 \end{itemize} 
\end{Rem}

\begin{Thm}\label{existence1}
    Suppose that Assumptions (A1),(A'2), (A3),(A4),(A'5) hold, and $p_0\ge (4l+4) \vee (4+4\alpha+4m)$. Moreover, if either (A6) hold or (A'2) hold for $L_2<0$, then the equation \eqref{eqn1.1} has a  unique strong solution.
   
\end{Thm}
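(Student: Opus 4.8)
The plan is to obtain moment bounds and non-explosion from the Lyapunov condition (A1), to prove pathwise uniqueness by a drift-removal (scale-function) transformation that exploits the non-degeneracy of $\sigma$, to establish weak existence by approximation, and then to invoke the Yamada--Watanabe theorem. Throughout, global statements are first localised at $\tau_R:=\inf\{t\ge 0:|X_t|\ge R\}$.

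\emph{Step 1 (moments and non-explosion).} Applying It\^o's formula to $V(x)=(1+x^2)^{p_0/2}$ and using (A1) gives $\tfrac{d}{dt}\,\mathbb{E}[V(X_{t\wedge\tau_R})]\le C\,\mathbb{E}[V(X_{t\wedge\tau_R})]+C$, so $\sup_{t\le T}\mathbb{E}[|X_{t\wedge\tau_R}|^{p_0}]$ is bounded uniformly in $R$ for each $T$; letting $R\to\infty$ yields $\tau_R\to\infty$ a.s.\ and the corresponding moment bound. The role of $p_0\ge(4l+4)\vee(4+4\alpha+4m)$ is precisely that the superlinear terms produced by $b$ (growth exponent $l+1$ from (A3)) and by $\sigma^2$ (growth exponent $2m+2$ from (A4)) are dominated by the quadratic term of (A1) after raising to the power $p_0/2$; the same margin will make the subsequent estimates close.

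\emph{Step 2 (pathwise uniqueness via drift removal).} Since $b$ is locally bounded (by (A3) and Remark~\ref{rm2.1}) and $\sigma^2\ge\kappa^2>0$ (by (A'5)), the scale function $s(x):=\int_0^x\exp\!\big(-\!\int_0^y\tfrac{2b(z)}{\sigma(z)^2}\,dz\big)\,dy$ is a strictly increasing $C^1$ bijection of $\real$ onto an open interval, with $s'$ locally Lipschitz and $s''=-2bs'/\sigma^2$ a.e. For any solution $X$, the It\^o--Tanaka formula (valid because $s\in C^1$ with $s'$ locally Lipschitz) shows that $Y_t:=s(X_t)$ is driftless, $dY_t=\tilde\sigma(Y_t)\,dW_t$, with $\tilde\sigma:=(s'\sigma)\circ s^{-1}$ positive and locally $(\alpha+1/2)$-H\"older; here (A6) --- or the dissipativity alternative, (A'2) with a negative constant --- is used to control the growth of $s'$, hence of $\tilde\sigma$, in the estimates. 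Given two solutions $X,\bar X$ on the same space driven by the same $W$ with $X_0=\bar X_0$, the processes $Y=s(X)$ and $\bar Y=s(\bar X)$ are driftless; applying It\^o to $\phi_n(Y_t-\bar Y_t)$ for the Yamada--Watanabe sequence $\phi_n\uparrow|\cdot|$ and localising at $\tau_R$ to kill the martingale term in expectation, the key point is that there is \emph{no} drift term, so it remains only to send the second-order term $\tfrac12\mathbb{E}\int_0^{t\wedge\tau_R}\phi_n''(Y_s-\bar Y_s)\,|\tilde\sigma(Y_s)-\tilde\sigma(\bar Y_s)|^2\,ds$ to $0$ using the local $(\alpha+1/2)$-H\"older continuity of $\tilde\sigma$ and the Yamada--Watanabe choice of $\phi_n$. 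This gives $\mathbb{E}|Y_{t\wedge\tau_R}-\bar Y_{t\wedge\tau_R}|=0$, and letting $R\to\infty$ (using Step~1) forces $Y\equiv\bar Y$, hence $X=s^{-1}(Y)\equiv s^{-1}(\bar Y)=\bar X$.

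\emph{Step 3 (weak existence) and conclusion.} A weak solution on $[0,\infty)$ is constructed by mollifying $b$ (and, if desired, $\sigma$, keeping it $\ge\kappa$) into smooth coefficients retaining the (A1)-type bound uniformly, solving the resulting SDEs, and extracting a weak limit: tightness comes from the uniform moments of Step~1 together with a Kolmogorov estimate, and the limit solves the martingale problem for $(b,\sigma)$ because the discontinuity set $\{\xi_1,\dots,\xi_k\}$ is Lebesgue-null and, $\sigma$ being non-degenerate, the limit process spends zero time there. Weak existence plus the pathwise uniqueness of Step~2 give, by the Yamada--Watanabe theorem, a unique strong solution of \eqref{eqn1.1}. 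The step I expect to be the main obstacle is Step~2 --- checking that the scale transformation is globally well-behaved and that $\tilde\sigma$ enjoys the growth needed for the Yamada--Watanabe estimate in the presence of \emph{superlinearly} growing $b$ and $\sigma$, which is exactly the point at which (A6), or the dissipativity alternative, enters. (One may alternatively first apply a Leobacher--Sz\"olgyenyi-type transformation \cite{LeoSzo} to make the drift continuous and then proceed as above; the roles of (A'5) and of (A6)/dissipativity are unchanged.)
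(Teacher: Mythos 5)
Your high-level plan — transform the SDE so the drift becomes tractable, then run a Yamada--Watanabe argument, with existence obtained by approximation plus moment bounds — is the same in spirit as the paper's. But the specific transformation you propose differs in a way that leaves a genuine gap.

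\textbf{The gap in Step 2.} You use the \emph{full} scale function $s(x)=\int_0^x\exp\bigl(-\int_0^y\tfrac{2b}{\sigma^2}\bigr)\,dy$, which kills the drift entirely. The paper's transformation $\varphi$ is deliberately \emph{not} the scale function: it only ``partially'' removes the drift, solving $\varphi'b+\tfrac12\varphi''\sigma^2=\Psi$ for a one-sided Lipschitz (not zero) right-hand side $\Psi$, and is built so that $1\le\varphi'\le H$ globally (Property (P2)). This boundedness of $\varphi'$ is what lets the transformed coefficients $\overline b=\Psi\circ\varphi^{-1}$, $\overline\sigma=(\varphi'\sigma)\circ\varphi^{-1}$ inherit polynomial growth and local H\"older continuity, so the Yamada--Watanabe machinery (their Lemma~\ref{uniquess}) applies cleanly. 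Your $s'$ is unbounded in exactly the regimes allowed by the hypotheses. In the case (A'2) with $L_1<0$, the drift is eventually negative for large $x$, so $s'(x)=\exp(-\int_0^x 2b/\sigma^2)$ grows super-exponentially and $\tilde\sigma=(s'\sigma)\circ s^{-1}$ does not have the controlled local H\"older constant the Yamada--Watanabe estimate needs; verifying this would require the more delicate analysis you flag but do not carry out. In the case where only (A6) holds and the drift is eventually nonnegative for large $x$ (property $(P_b1)$ of the paper), $s'(x)\to 0$ and $s$ maps a half-line onto a \emph{bounded} interval, so $Y=s(X)$ lives on a bounded state space with $\tilde\sigma$ potentially singular at the boundary; your argument does not address this. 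You do gesture at this as ``the main obstacle'' and mention the Leobacher--Sz\"olgyenyi alternative, which is indeed close to what the paper does (their $\varphi$ is affine outside a compact interval, with a smooth matching near the endpoints designed precisely so that $\varphi'$ stays bounded and Assumption (A6) or $L_1<0$ controls $\varphi''$); but in the form you wrote it, the drift-removal step does not go through.

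\textbf{Minor difference in Step 3.} For existence, the paper truncates the coefficients at level $N$ (making them bounded, elliptic, H\"older) and uses the known result for that class, then passes to the limit via uniform $p_0$-moment bounds; you propose mollification plus a martingale-problem/weak-limit argument. Either route is viable. Your claim that ``the limit process spends zero time at $\Xi$ because $\sigma$ is non-degenerate'' is the right intuition, but it needs the same occupation-time estimate that the paper proves via local times (cf.\ \eqref{localtimesigmaN}); it is not automatic from tightness alone.

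\textbf{Summary.} The existence and moment-bound parts are fine in outline. The uniqueness step as written would fail, because the pure scale function does not produce a transformed diffusion coefficient with the growth and H\"older bounds required by Yamada--Watanabe; you need a transformation with bounded derivative, which is precisely the construction of $\varphi$ in the paper.
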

\begin{Thm}\label{existence2}
    Suppose that (A1)--(A5) hold for $p_0\geq (4l+4)\vee(4m+4\alpha+4)$. Then the equation \eqref{eqn1.1} has a  unique strong solution.
\end{Thm}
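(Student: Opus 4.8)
The plan is to prove Theorem \ref{existence2} in three steps---a priori moment bounds together with non-explosion, pathwise uniqueness, and weak existence---and then to invoke the Yamada--Watanabe theorem, which upgrades weak existence plus pathwise uniqueness to the existence of a unique strong solution.

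\emph{A priori bounds.} For a solution $X$ of \eqref{eqn1.1}, apply It\^o's formula to $t\mapsto|X_t|^{p_0}$, localise by $\tau_R=\inf\{t\ge0:|X_t|\ge R\}$, and use Assumption (A1) (whose factor $p_0-1$ is chosen precisely so that the It\^o correction is absorbed) to bound the drift of $|X_t|^{p_0}$ by $p_0\bigl(\gamma|X_t|^{p_0}+\eta|X_t|^{p_0-2}\bigr)$; Gronwall's inequality then gives $\sup_{t\le T}\bE|X_t|^{p_0}\le C(T,x_0)$ for every $T$, so $\tau_R\to\infty$ a.s.\ and $X$ does not explode. Applied to the approximating processes $X^N$ introduced below, the same computation gives bounds uniform in $N$; combined with the polynomial growth of $b$ (Remark \ref{rm2.1}) and of $\sigma$ (a consequence of (A4)), this yields the increment bound $\bE|X^N_t-X^N_s|^4\le C|t-s|^2$ needed for tightness, and it is exactly here that the hypothesis $p_0\ge(4l+4)\vee(4m+4\alpha+4)$ is used, namely to keep $\bE|b(X^N_s)|^4$ and $\bE|\sigma(X^N_s)|^4$ finite.

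\emph{Pathwise uniqueness.} Let $X,Y$ solve \eqref{eqn1.1} with the same Brownian motion and $X_0=Y_0$. Apply the standard Yamada--Watanabe smooth approximations $\phi_\delta$ of $|\cdot|$ to $X_t-Y_t$ and localise by $\tau_R$. The second-order It\^o term $\tfrac12\phi_\delta''(X_s-Y_s)\bigl(\sigma(X_s)-\sigma(Y_s)\bigr)^2$ is controlled on $\{|X_s|\vee|Y_s|\le R\}$ using (A4) (note $|\cdot|^{\alpha+1/2}\le C_R|\cdot|^{1/2}$ on bounded sets, so the borderline Yamada--Watanabe estimate applies) and tends to $0$ as $\delta\downarrow0$; the first-order term $\phi_\delta'(X_s-Y_s)\bigl(b(X_s)-b(Y_s)\bigr)$ is bounded by $L_1|X_s-Y_s|$ directly from the \emph{global} one-sided Lipschitz Assumption (A2). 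Letting $\delta\downarrow0$ and applying Gronwall's inequality gives $\bE|X_{t\wedge\tau_R}-Y_{t\wedge\tau_R}|=0$, and then $R\to\infty$. The discontinuity of $b$ plays no role here, since (A2) is a pointwise inequality rather than a regularity statement.

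\emph{Weak existence.} By (A3), on each of the intervals $(\xi_i,\xi_{i+1})$, $(-\infty,\xi_1)$, $(\xi_k,+\infty)$ the function $b$ is locally Lipschitz and uniformly continuous near the endpoints, so the one-sided limits $b(\xi_i-)$ and $b(\xi_i+)$ exist and are finite. Define continuous functions $b_N$ that coincide with $b$ outside $\bigcup_i(\xi_i-\tfrac1N,\xi_i+\tfrac1N)$ and on each window interpolate between $b(\xi_i-)$ and $b(\xi_i+)$ while staying between these two values; then $b_N$ is continuous and locally Lipschitz, has at most the polynomial growth of $b$, and satisfies (A1) with unchanged $\gamma$ and with $\eta_N$ bounded uniformly in $N$ (on each window $x$ ranges over a fixed bounded set). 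Hence each SDE $dX^N_t=b_N(X^N_t)\,dt+\sigma(X^N_t)\,dW_t$, $X^N_0=x_0$, has a unique non-exploding strong solution $X^N$ (continuous locally Lipschitz drift, locally $1/2$-H\"older diffusion, and the Lyapunov bound (A1) preventing explosion). By the first step the laws of $\{X^N\}$ are tight in $C([0,T])$, and by the Skorokhod representation theorem we may assume, on a possibly enlarged probability space, that $X^N\to X$ and $W^N\to W$ uniformly on $[0,T]$ almost surely, with $W$ a Brownian motion and each $X^N$ still solving its SDE. Passing to the limit in $\int_0^t\sigma(X^N_s)\,dW^N_s$ is routine; the only real obstacle is the drift, and it reduces to the estimate
\begin{equation*}
\bE\int_0^T\1_{\{|X^N_s-\xi_i|<\rho\}}\,ds\le C\rho\qquad\text{uniformly in }N\in\n\text{ and }\rho\in(0,\rho_0].
\end{equation*}
This is where Assumption (A5) is decisive: since $\sigma(\xi_i)>0$, continuity gives $\sigma^2\ge c_i>0$ on $\{|x-\xi_i|\le\rho_0\}$, and the occupation times formula $\int_0^T\1_{\{|X^N_s-\xi_i|<\rho\}}\sigma(X^N_s)^2\,ds=\int_{\xi_i-\rho}^{\xi_i+\rho}L^a_T(X^N)\,da$, together with the bound $\bE L^a_T(X^N)\le C$ from Tanaka's formula and the uniform moment bounds, yields the displayed inequality; Fatou's lemma transfers it to $X$, so $\mathrm{Leb}\{s\le T:X_s=\xi_i\}=0$ almost surely for each $i$. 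Writing $b_N(X^N_s)-b(X_s)=\bigl[b_N(X^N_s)-b(X^N_s)\bigr]+\bigl[b(X^N_s)-b(X_s)\bigr]$, the first bracket is supported on $\bigcup_i\{|X^N_s-\xi_i|<1/N\}$ and bounded there, so its time-integral is $O(1/N)$ in $L^1$; the second bracket tends to $0$ for a.e.\ $s$ (since $b$ is continuous off the finite set $\{\xi_1,\dots,\xi_k\}$, which $X$ avoids for a.e.\ $s$, and $X^N_s\to X_s$), and the uniform moment bounds supply the uniform integrability needed to conclude that its time-integral tends to $0$ in $L^1$. Hence $\int_0^tb_N(X^N_s)\,ds\to\int_0^tb(X_s)\,ds$, so $X$ solves \eqref{eqn1.1}; combined with pathwise uniqueness and the Yamada--Watanabe theorem, this gives a unique strong solution. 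The genuinely delicate point---absent in the Lipschitz theory---is precisely this control of the time spent by the approximations near the discontinuities $\xi_i$.
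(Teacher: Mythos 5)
Your proof is correct in its main lines, but it takes a genuinely different route from the paper's, and one step would need more care.

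The paper proceeds by a \emph{two-stage} localisation: first (proof of Theorem \ref{existence2}) it truncates both $b$ and $\sigma$ to obtain bounded coefficients, reducing to Lemma \ref{existence2.1} with $m=l=\alpha=0$; then inside that lemma it smooths the jump at each $\xi_i$ by a linear interpolation on $[\xi_i-\tfrac1N,\xi_i+\tfrac1N]$ and shows, via the Yamada--Watanabe function $\phi_{\delta\varepsilon}$ applied to $X_M-X_N$ together with the occupation-time/Tanaka bound \eqref{localtimesigmaN}, that the sequence $X_N$ is \emph{$L^1$-Cauchy}. This gives strong convergence directly; uniqueness is then Lemma \ref{uniquess}, and the outer truncation is undone via the moment bounds in \cite{NL19} and the localisation argument of \cite{NL17}. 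You instead smooth the jumps once (keeping superlinear growth), obtain uniform moment bounds and tightness, invoke Skorokhod representation to get a weak limit, pass to the limit in the drift using the same occupation-time estimate, and finally upgrade weak existence plus your separately established pathwise uniqueness to strong existence via the Yamada--Watanabe theorem. Both arguments pivot on the identical key fact $\bE\int_0^T \1_{\{|X^N_s-\xi_i|<\rho\}}\,ds\le C\rho$ obtained from Tanaka's formula and (A5), so the mathematical core is shared; your route is softer and more classically probabilistic, the paper's Cauchy argument is more constructive.

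The one step that is not self-justifying in your write-up is the claim that each smoothed SDE $dX^N_t=b_N(X^N_t)\,dt+\sigma(X^N_t)\,dW_t$, with $b_N$ continuous, locally Lipschitz and still superlinearly growing and $\sigma$ locally $(\alpha+\tfrac12)$-H\"older and superlinearly growing, has a unique non-exploding strong solution. You assert this from ``continuous locally Lipschitz drift, locally $1/2$-H\"older diffusion, and the Lyapunov bound (A1)''; that is true for one-dimensional SDEs, but it is itself a nontrivial theorem (essentially the continuous-drift special case of the result being proved), and it deserves either a citation (e.g.\ to \cite{NL17}, Theorem 3.1) or a brief argument (truncate, use Lyapunov non-explosion, use 1-D Yamada--Watanabe uniqueness under $1/2$-H\"older diffusion). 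The paper's outer truncation to \emph{bounded} coefficients is exactly what avoids having to make this claim without proof. One further small point: it is worth noting explicitly that (A2) forces each jump of $b$ to be downward ($b(\xi_i+)\le b(\xi_i-)$), which is what makes your interpolated $b_N$ inherit the one-sided Lipschitz estimate with the same constant; you implicitly use this when invoking non-explosion and uniqueness for $X^N$.
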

The existence and uniqueness of solution to one dimensional SDEs with super-linear growth coefficients, discontinuous drift and degenerate locally Lipschitz continuous diffusion coefficient have been studied recently in  \cite{MSY22, HG22}. The novel of our Theorems \ref{existence1} and \ref{existence2} is that they can be applied for SDE with locally H\"older continuous coefficient. Our approach is based on a localization technique (see \cite{NL17}).  

\subsection{Tamed-adaptive Euler-Maruyama approximation scheme}

Throughout this paper, we always assume that Assumptions (A1), (A3) and (A4) hold. 
Let  $\Xi = \{\xi_1,\ldots,\xi_k\}$, $d(x, \Xi) = \min \limits_{ 1 \leq i \leq k}|x - \xi_i|$, and 
$\Xi^\varepsilon = \{x \in \mathbb R:  d(x, \Xi)   \leq \varepsilon\}$ for any $\varepsilon > 0$. 
Since $\sigma$ is continuous and $\sigma(\xi_i) \neq 0$, there exist $\mu,\nu>0$ depending only on $\sigma$ such that
$\inf\limits_{|x-\xi_i|\leq \mu}\sigma(x)\geq \nu.$ Let $ \varepsilon_0 = \displaystyle  \mu  \wedge \displaystyle \min_{1 \leq i \leq k-1} (\xi_{i+1} - \xi_i)$.

Let $\Delta_0 $ be a positive constant satisfying 
$$\Delta \log^4(1/\Delta) < \sqrt{\Delta} \log^2 (1/\Delta) <\frac 12 \varepsilon_0,$$
 for all $\Delta \in (0, \Delta_0)$. 
For each $\Delta \in (0,\Delta_0)$, we define the functions $\sigma_\Delta$ and $h_\Delta$ by
	$$\sigma_{\Delta}(x):=\frac{\sigma(x)}{1+\Delta^{1/2}|\sigma(x)|},$$
	and 
	\begin{equation}\label{chooseh}
	    h_{\Delta}(x)=
	    \begin{cases}
	         \dfrac{\Delta}{[1+|b(x)|+|\sigma(x)|+|x|^{l}]^2} & \text{ if } \quad x \in (\Xi^{\varepsilon_1})^c, \\
	        \dfrac{[d(x,\Xi)]^2}{\log^4(1/\Delta)[1+|b(x)|+|\sigma(x)|+|x|^l]^2} & \text{ if }\quad x 
	        \in \Xi^{\varepsilon_1}\setminus \Xi^{\varepsilon_2},\\
	        \dfrac{\Delta^2\log^4(1/\Delta)}{[1+|b(x)|+|\sigma(x)|+|x|^l]^2} & \text{ if } \quad x \in \Xi^{\varepsilon_2},
	    \end{cases}
	\end{equation}
	where $\varepsilon_1:=\sqrt{\Delta}\log^2(1/\Delta)$ and $ \varepsilon_2:=\Delta \log^4(1/\Delta)$.

	The tamed-adaptive Euler-Maruyama scheme is defined as follows. Let  $Y_0=x_0, t_0=0$, and for each $i \geq 0$,
	\begin{equation}\label{scheme}
	\begin{cases}
	    &t_{i+1}=t_i+h_{\Delta}(Y_{t_i}),\\
	    &Y_{t}=Y_{t_i}+b(Y_{t_i})(t-t_i)+\sigma_{\Delta}(Y(t_i))(W_{t}-W_{t_i}), \quad t_i <  t \leq t_{i+1}.
	\end{cases}
	\end{equation}
	Note that if Assumptions (A1), (A3) and (A4) hold, then by following the argument in the proof of Proposition 2.1 in \cite{LTT}, it can be shown that the scheme is well-defined, i.e. $\lim \limits_{i\rightarrow \infty}t_i=+\infty,$ a.s.

	For each $t\geq 0$, we define $\underline{t}:=\max\{t_i,t_i\leq t\}$, which is a stopping time, and $\overline{Y}_t:=Y_{\underline{t}}$. 
	
Let $[\frac{p_0}{2}]$ denote the integer part of  $\frac{p_0}{2}$. The following result shows the convergence in $L^1$-norm of the tamed-adaptive Euler-Maruyama scheme. 
	\begin{Thm}\label{convergence}
    Let Assumptions (A1)--(A5) hold and $[\frac{p_0}{2}] \ge (l+1) \vee (1+2\alpha+2m)$.
	Then there exists a positive constant $C$ which does not depend on $\Delta$ such that 
	\begin{equation} \label{EXmu-X1}
	\sup\limits_{0\leq t\leq T} \bE \left[  |Y_t - X_t| \right] \le \begin{cases}
	C \Delta^{\alpha} &\text{ if } \ 0<\alpha \leq \dfrac{1}{2}, \\ 
	\dfrac{C}{\log \frac{1}{\Delta}}  &\text{ if } \ \alpha=0.
	\end{cases}
	\end{equation}
	Moreover, if $\gamma$ and $L_1$ are negative, then the constant $C$ does not depend on $T$ either.
\end{Thm}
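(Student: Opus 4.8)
The plan is to combine the Yamada--Watanabe/tamed--adaptive machinery of \cite{LTT} with the discontinuity-handling idea of \cite{Larisa} and a new auxiliary function $\varphi$. As preliminaries, using (A1), the taming bound $|\sigma_\Delta|\le\Delta^{-1/2}$ and $h_\Delta(x)\le\Delta/(1+|b(x)|+|\sigma(x)|+|x|^l)^2$, one first establishes moment estimates $\sup_{0\le t\le T}\bE[|X_t|^{q}+|Y_t|^q+|\overline{Y}_t|^q]<\infty$ for every $q\le[p_0/2]$ --- this is exactly where the hypothesis $[p_0/2]\ge(l+1)\vee(1+2\alpha+2m)$ is used, so that all exponents below are admissible --- together with the one-step estimates $\bE[(1+|\overline{Y}_s|^l)|Y_s-\overline{Y}_s|]\le C\sqrt\Delta$, $\bE[(1+|Y_s|^m+|\overline{Y}_s|^m)^2|Y_s-\overline{Y}_s|^{2\alpha+1}]\le C\Delta^{\alpha+1/2}$, and, from the Gaussian tails of $W_s-W_{\underline s}$, $\bE[|Y_s-\overline{Y}_s|^{q}\mathbf 1_{\{|Y_s-\overline{Y}_s|\ge\varepsilon_1\}}]\le C_q\Delta^{q}$ for any $q$. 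When $\gamma<0$ all of these are uniform in $T$. Since $\bE|Y_t-\overline{Y}_t|\le C\sqrt\Delta$, it suffices to bound $\sup_t\bE|X_t-Y_t|$.

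Let $\phi_\delta$ be the Yamada--Watanabe mollification of $|\cdot|$ with parameters $\delta,\kappa_\delta$ to be tuned: $\phi_\delta\in C^2(\real)$, $|x|-\delta\le\phi_\delta(x)\le|x|$, $|\phi_\delta'|\le1$, $\phi_\delta(x)\le x\phi_\delta'(x)\le|x|$, and $0\le\phi_\delta''(x)\le\frac{2}{\kappa_\delta|x|}\mathbf 1_{\{\rho_\delta\le|x|\le\delta\}}$ with inner cut-off $\rho_\delta$. Itô's formula gives $\bE\phi_\delta(X_t-Y_t)=\bE\int_0^t\phi_\delta'(X_s-Y_s)(b(X_s)-b(\overline{Y}_s))\,ds+\frac12\bE\int_0^t\phi_\delta''(X_s-Y_s)(\sigma(X_s)-\sigma_\Delta(\overline{Y}_s))^2\,ds$. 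For the diffusion integral one writes $\sigma(X_s)-\sigma_\Delta(\overline{Y}_s)=(\sigma(X_s)-\sigma(Y_s))+(\sigma(Y_s)-\sigma(\overline{Y}_s))+(\sigma(\overline{Y}_s)-\sigma_\Delta(\overline{Y}_s))$; the first term matches $|X_s-Y_s|$ and, via (A4), $\phi_\delta''(x)|x|\le 2/\kappa_\delta$, and the moment bounds, contributes $\asymp(1+\cdots)^2\delta^{2\alpha}/\kappa_\delta$ per unit time; the cross term is controlled by $\phi_\delta''(X_s-Y_s)\le 2/(\kappa_\delta\rho_\delta)$ together with $\bE[(1+\cdots)^2|Y_s-\overline{Y}_s|^{2\alpha+1}]\le C\Delta^{\alpha+1/2}$; the taming remainder is $O(\sqrt\Delta)$. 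Tuning $(\delta,\kappa_\delta,\rho_\delta)$ --- $\kappa_\delta\asymp1$, $\delta,\rho_\delta\asymp\sqrt\Delta$ for $\alpha>0$, and $\kappa_\delta\asymp\log(1/\Delta)$ for $\alpha=0$, which is where the logarithmic factors in $h_\Delta$, $\varepsilon_1$ and $\varepsilon_2$ come in to balance the errors --- yields a bound of order $\Delta^\alpha$, resp.\ $1/\log(1/\Delta)$, per unit time, as in \cite{LTT}. This part is routine but lengthy, so I would dispatch it first.

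The genuinely new point is the drift integral, which I would split as $b(X_s)-b(\overline{Y}_s)=(b(X_s)-b(Y_s))+(b(Y_s)-b(\overline{Y}_s))$. For the first piece, (A2) together with $\phi_\delta(x)\le x\phi_\delta'(x)\le|x|$ gives $\phi_\delta'(X_s-Y_s)(b(X_s)-b(Y_s))\le L_1\phi_\delta(X_s-Y_s)+L_1^+\delta$, which is $\le 0$ when $L_1<0$ and provides the uniform-in-time contraction. For the second piece, $|\phi_\delta'|\le1$ reduces matters to $\bE\int_0^t|b(Y_s)-b(\overline{Y}_s)|\,ds$, and here one splits according to whether $Y_s$ and $\overline{Y}_s=Y_{\underline s}$ lie in the same connected component of $\real\setminus\Xi$. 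On the ``same component'' event, (A3) and the one-step bounds give a contribution $\le C\sqrt\Delta$. On the ``straddle'' event, a case analysis on the location of $\overline{Y}_{\underline s}$ shows: if $\overline{Y}_{\underline s}\notin\Xi^{\varepsilon_2}$, then crossing a discontinuity point requires a Brownian increment of size $\gtrsim\log^2(1/\Delta)$ standard deviations --- because there $\sqrt{h_\Delta(\overline{Y}_{\underline s})}|\sigma_\Delta(\overline{Y}_{\underline s})|$ is smaller than $d(\overline{Y}_{\underline s},\Xi)$ by a factor $\log^{-2}(1/\Delta)$ --- so this event has super-polynomially small probability and its contribution is negligible even after multiplying by moments of $b$; if $\overline{Y}_{\underline s}\in\Xi^{\varepsilon_2}$, then since (A3) forces $b$ to be bounded on the bounded set $\Xi^{\varepsilon_0}$, one has $|b(Y_s)-b(\overline{Y}_s)|\mathbf 1_{\mathrm{straddle}}\le C\mathbf 1_{\{\overline{Y}_{\underline s}\in\Xi^{\varepsilon_2}\}}+(\text{negligible})$. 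Thus everything reduces to the occupation-time bound $\bE\int_0^t\mathbf 1_{\{\overline{Y}_s\in\Xi^{\varepsilon_2}\}}\,ds\le C\varepsilon_2(1+t)$, which is $\le C\Delta\log^4(1/\Delta)(1+t)\le C_T\Delta^\alpha$ (and $\le C_T/\log(1/\Delta)$ when $\alpha=0$).

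This occupation-time estimate is where $\varphi$ enters, and I expect it to be the main obstacle. Writing $M:=\sup_{\Xi^{\varepsilon_0}}\sigma<\infty$ and $\nu>0$ as in the definition of $\varepsilon_0$, choose a convex $\varphi\in C^2(\real)$ with $\varphi\ge0$, $\varphi''=0$ off $\Xi^{2\varepsilon_2}$, $\varphi''\ge M^{-2}$ on $\Xi^{\varepsilon_2}$, and $\|\varphi\|_\infty+\|\varphi'\|_\infty\le C\varepsilon_2$ (possible because $\varphi'$ gains only $O(\varepsilon_2)$ across each of the finitely many bumps and is flat elsewhere). Itô's formula gives $\bE\varphi(Y_t)=\varphi(x_0)+\bE\int_0^t\varphi'(Y_s)b(\overline{Y}_s)\,ds+\frac12\bE\int_0^t\varphi''(Y_s)\sigma_\Delta(\overline{Y}_s)^2\,ds$; on $\{Y_s\in\Xi^{\varepsilon_2}\}$ one has, up to a super-polynomially small event, $\overline{Y}_{\underline s}\in\Xi^\mu$ and hence $\sigma_\Delta(\overline{Y}_s)^2\ge\nu^2/2$, so rearranging yields $\frac{\nu^2}{4M^2}\bE\int_0^t\mathbf 1_{\{Y_s\in\Xi^{\varepsilon_2}\}}\,ds\le\|\varphi\|_\infty+\|\varphi'\|_\infty\bE\int_0^t|b(\overline{Y}_s)|\,ds+(\text{negligible})\le C\varepsilon_2(1+t)$; transferring from $Y_s$ to $\overline{Y}_s$ via the small size of $|Y_s-\overline{Y}_s|$ completes the bound. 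Collecting the three groups of estimates, $u(t):=\bE\phi_\delta(X_t-Y_t)$ satisfies $u(t)\le\int_0^t(L_1u(s)+g(s))\,ds$ with $g\ge0$ and $\int_0^tg\le C(\Delta^\alpha+\varepsilon_2)(1+t)$, and Gronwall together with $\bE|X_t-Y_t|\le u(t)+\delta+\bE|Y_t-\overline{Y}_t|$ gives \eqref{EXmu-X1} on $[0,T]$. Finally, when $\gamma<0$ and $L_1<0$, all moment bounds are uniform in $T$, the factor $e^{L_1(t-s)}\le1$ handles the constant-in-time errors, and rerunning the $\varphi$-argument with an integration by parts in the discounted Gronwall --- using that $\bE\varphi(Y_t)\le\|\varphi\|_\infty\le C\varepsilon_2$ is uniformly bounded --- shows $\int_0^te^{L_1(t-s)}\pr(\overline{Y}_s\in\Xi^{\varepsilon_2})\,ds\le C\varepsilon_2$ uniformly in $t$, so that $C$ in \eqref{EXmu-X1} may be taken independent of $T$.
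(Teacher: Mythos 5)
Your overall architecture matches the paper's: Yamada--Watanabe mollifier $\phi_{\delta\varepsilon}$, It\^o applied to $e^{-L_1t}\phi_{\delta\varepsilon}(X_t-Y_t)$, the diffusion term handled by the three-way split $\sigma(X_s)-\sigma(Y_s)+\sigma(Y_s)-\sigma(\overline{Y}_s)+\sigma(\overline{Y}_s)-\sigma_\Delta(\overline{Y}_s)$, the drift split into the one-sided Lipschitz piece $b(X_s)-b(Y_s)$ and the increment $b(Y_s)-b(\overline{Y}_s)$, the latter decomposed on the same-component/straddle dichotomy, and the Gaussian-tail bound (the paper's Lemma~3.6) disposing of the case where $\overline{Y}_{\underline s}\notin\Xi^{\varepsilon_2}$. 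Your tuning of the Yamada--Watanabe parameters also agrees with the paper's.

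Where you genuinely diverge is in the occupation-time estimate that controls the remaining straddle event $\{\overline{Y}_s\in\Xi^{\varepsilon_2}\}$. You propose a barrier-function argument: a convex $\varphi\in C^2$ with $\varphi''$ bounded below on $\Xi^{\varepsilon_2}$, supported in $\Xi^{2\varepsilon_2}$, and slope of order $\varepsilon_2$, plugged into It\^o for $\varphi(Y_t)$, so that the quadratic-variation term immediately lower-bounds the occupation time of $\Xi^{\varepsilon_2}$. The paper instead goes through Tanaka's formula and local times of $Y$: it bounds $\bE|L^a_{t+h}(Y)-L^a_t(Y)|\lesssim h$, applies the occupation-time formula against $\sigma_\Delta^2(\overline{Y}_s)$, then passes to $\sigma^2(Y_s)$ via the $\Delta^{1/4+\alpha/2}$ estimate on $\sigma^2(Y_s)-\sigma^2(\overline{Y}_s)$ and a bootstrap/induction (Lemma~3.5), feeding the result into Lemmas~3.7--3.8. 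Both routes deliver a bound that is $\ll\Delta^\alpha$ for $\alpha\le 1/2$, so both suffice for Theorem~2.3. Your route is arguably simpler for this purpose because the It\^o quadratic-variation term already carries $\sigma_\Delta^2(\overline{Y}_s)$, so you never need to compare $\sigma^2(Y_s)$ with $\sigma_\Delta^2(\overline{Y}_s)$ nor iterate; the trade-off is that the paper's Lemma~3.5, which keeps a weighted $\int_0^\varepsilon f$ rather than $\varepsilon\sup f$, is also reused with a singular weight $f(x)=x^{-2}$ in the cost analysis (Theorem~2.7), which your $\varphi$-estimate could not reproduce.

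Two smaller points. First, your $\varphi$ cannot be both convex, have $\varphi''>0$ somewhere, and be bounded: its slope increases across each bump, so it grows linearly on one side and $\|\varphi\|_\infty=\infty$. The fix is what you actually need: $\bE\varphi(Y_t)\lesssim\varepsilon_2(1+\bE|Y_t|)\lesssim\varepsilon_2$, using the moment bound (uniform in $t$ when $\gamma<0$), in place of $\|\varphi\|_\infty\le C\varepsilon_2$; your argument then goes through unchanged, including the discounted version for the uniform-in-$T$ statement. Second, the ``new function $\varphi$'' advertised in the introduction is not the one you guessed: in the paper it appears only in the proof of Theorem~2.4 (under (A'2)), where it is a drift-removal/scale-type transformation solving $\varphi'b+\tfrac12\varphi''\sigma^2=\Psi$; it plays no role in the proof of Theorem~2.3. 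Your barrier $\varphi$ is an independent, legitimate device, not a reconstruction of the paper's.
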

	
	Assumption (A2) is in fact quite restrictive since it excludes some very simple functions, such as $b(x) = 1_{(0,\infty)}(x)$. In the following, we will study the convergence of the tamed-adaptive scheme \eqref{scheme} under Assumption (A'2),  which only requires that $b$ is one-sided Lipschitz continuous outside the interval $(\xi_1, \xi_k)$.   However, we need to assume that $\sigma$ satisfies Assumption (A'5), which is  stronger than Assumption (A5). 

\begin{Thm}\label{convergence2}
   Suppose that Assumptions (A1),(A'2), (A3),(A4) and (A'5) hold, and $[\frac{p_0}{2}] \ge (l+1) \vee (1+2\alpha+2m)$. 
   \begin{enumerate}
       \item[a)] If (A6) holds, then 
       there exists a positive constant $C$ which does not depend on  $\Delta$ 
 such that
	\begin{equation} \label{EXmu-X2}
	\sup\limits_{0\leq t\leq T} \bE \left[  |Y_t - X_t| \right] \le \begin{cases}
	C \Delta^{\alpha} &\text{ if } \ 0<\alpha \leq \dfrac{1}{2}, \\ 
	\dfrac{C}{\log \frac{1}{\Delta}}  &\text{ if } \ \alpha=0.
	\end{cases}
	\end{equation}
 \item[b)]  If (A'2) holds for some $L_1 < 0$, then the estimate \eqref{EXmu-X2} also holds. Moreover, if we suppose further that  $\gamma < 0$, then the constant $C$ does not depend on $T$ either. 
   \end{enumerate}
   
\end{Thm}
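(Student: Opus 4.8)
The plan is to follow the structure of the proof of Theorem \ref{convergence} (which handles the case where the global one-sided Lipschitz bound (A2) is available), but to compensate for the weaker assumption (A'2) — one-sided Lipschitz continuity of $b$ only on the two unbounded rays $(-\infty,\xi_1)$ and $(\xi_k,+\infty)$ — by exploiting the stronger nondegeneracy (A'5), $\sigma(x)\ge\kappa>0$ everywhere. First I would establish uniform-in-$\Delta$ moment bounds: using (A1) together with the tamed definition of $\sigma_\Delta$ and the adaptive step size $h_\Delta$, one shows $\sup_\Delta\sup_{t\ge 0}\bE[|Y_t|^{q}]<\infty$ (and uniformly in $t$ when $\gamma<0$) for $q$ up to roughly $p_0$, and likewise a uniform bound on the increments $\bE[|Y_t-\overline{Y}_t|^q]$ controlled by $h_\Delta(\overline Y_t)$; the analogous bounds for $X$ come from (A1) and Theorem \ref{existence1}. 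These are essentially the estimates already used for Theorem \ref{convergence}, so I would cite them rather than redo them.

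The core is the error analysis $\bE|Y_t-X_t|$. As in \cite{LTT} and \cite{Larisa}, I would apply the Yamada–Watanabe approximation: fix a sequence $\delta_m\downarrow 0$ and smooth functions $\phi_m$ approximating $|x|$ with $|\phi_m''(x)|\le \tfrac{2}{x\log m}\mathbf 1_{[\delta_m/m,\delta_m]}(|x|)$, apply Itô's formula to $\phi_m(Y_t-X_t)$, and take expectations. The diffusion (second-order) term is handled by (A4): it produces a contribution of order $\delta_m^{2\alpha}$ plus a term involving $\bE[|Y_t-\overline Y_t|^{\,\cdot}]$ from replacing $\sigma_\Delta(\overline Y)$ by $\sigma(X)$, which the step-size choice makes $O(\Delta^{2\alpha})$ (or $O(1/\log\tfrac1\Delta)$ when $\alpha=0$) after optimizing $\delta_m$. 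The drift (first-order) term $\bE[\phi_m'(Y_t-X_t)(b(\overline Y_t)-b(X_t))]$ is where (A'2) is too weak: on the set where $Y_t$ and $X_t$ lie in a common component of $\mathcal S$ we use (A3) to get a Lipschitz-type bound $L_2'(1+|Y|^l+|X|^l)|Y-X|$, feeding a Gronwall term; on the complementary set — where the pair straddles one of the points $\xi_i$ — we cannot use one-sided Lipschitzness. Here I would introduce the auxiliary function $\varphi$ mentioned in the introduction: $\varphi$ is nonnegative, vanishes outside a neighbourhood of $\Xi$, and is built so that $\mathcal L\varphi$ (the generator of $Y$ resp. $X$ applied to $\varphi$) dominates the bad part of the drift term. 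Because (A'5) guarantees $\sigma^2\ge\kappa^2$ near each $\xi_i$, the second-derivative term $\tfrac12\sigma_\Delta^2\varphi''$ has a definite sign that absorbs the uncontrolled drift increment — this is exactly the mechanism by which a nondegenerate diffusion "smooths out" a discontinuity in the drift. One then shows the extra error contributed through $\varphi$ is controlled by the probability that $\overline Y_t$ (or $X_t$) lies in $\Xi^{\varepsilon_1}$ together with the occupation-time / step-size estimates near $\Xi$; the adaptive choice of $h_\Delta$ on $\Xi^{\varepsilon_1}\setminus\Xi^{\varepsilon_2}$ and $\Xi^{\varepsilon_2}$ (the $\log^4(1/\Delta)$ factors) is precisely tuned so that this contribution is again $O(\Delta^\alpha)$.

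Combining: after Itô and the splitting above I would arrive at an inequality of the form
\begin{equation*}
\bE|Y_t-X_t| \le C\,\Delta^{\alpha\wedge(\ldots)} + C\int_0^t \bE|Y_s-X_s|\,ds + (\text{negligible in }m),
\end{equation*}
where when $\gamma<0$ and $L_1<0$ the coefficient of the integral is strictly negative (coming from the one-sided Lipschitz bound on the rays plus the dissipativity (A1)), so Gronwall's inequality yields a bound independent of $t$, hence uniform in $T$; otherwise we get the stated $O(\Delta^\alpha)$ on $[0,T]$ with $T$-dependent constant. Part (a), assuming (A6), uses (A6) to bound ratios $\sigma(\overline Y)/\sigma(Y)$ when estimating the diffusion term near $\Xi$ without the lower bound $\kappa$ being needed at that step; part (b) replaces that role by (A'5) directly, and when additionally $L_1<0,\gamma<0$ runs the uniform-in-$T$ Gronwall argument. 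The main obstacle is designing $\varphi$ so that simultaneously (i) $\mathcal L^Y\varphi$ and $\mathcal L^X\varphi$ dominate the straddling drift term for \emph{both} processes, (ii) $\varphi$ and its derivatives are bounded by something that the moment bounds can absorb, and (iii) the error this introduces is compatible with the step-size regimes $\Xi^{\varepsilon_1}\setminus\Xi^{\varepsilon_2}$ and $\Xi^{\varepsilon_2}$; getting the powers of $\Delta$ and $\log(1/\Delta)$ to match up in that bookkeeping is the delicate part, and it is there that (A'5) (rather than merely (A5)) is essential, since a vanishing $\sigma$ near some $\xi_i$ would destroy the sign argument.
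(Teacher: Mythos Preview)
Your proposal has a genuine gap in the treatment of the drift term, and the role you assign to the auxiliary function $\varphi$ is not the one that actually works.

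In the paper the function $\varphi$ is \emph{not} a localized bump supported near $\Xi$ that is added as a Lyapunov correction. It is a global $C^1$ diffeomorphism of $\mathbb{R}$ with $1\le\varphi'\le H$ and with $\varphi''$ bounded on $\mathbb{R}\setminus\Xi$, constructed so that the transformed drift
\[
\Psi(x):=\varphi'(x)b(x)+\tfrac12\varphi''(x)\sigma^2(x)
\]
is \emph{globally} one-sided Lipschitz (a Zvonkin-type removal-of-singularity). One then applies the Yamada--Watanabe smoothing $\phi_{\delta\varepsilon}$ to $\varphi(X_t)-\varphi(Y_t)$ rather than to $X_t-Y_t$; since $\varphi'\ge1$ this dominates $|X_t-Y_t|$. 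It\^o's formula produces $\phi'_{\delta\varepsilon}(\varphi(X_t)-\varphi(Y_t))(\Psi(X_t)-\Psi(Y_t))$ as the principal drift term, and the one-sided Lipschitz property of $\Psi$ feeds a clean Gronwall inequality with constant $L_\Psi$. All other terms ($\varphi'(Y_t)[b(\overline Y_t)-b(Y_t)]$, $\varphi''(Y_t)[\sigma_\Delta^2(\overline Y_t)-\sigma^2(Y_t)]$, and the second-order diffusion term) are harmless error terms controlled by the moment and increment bounds plus the occupation-time estimate near $\Xi$.

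Your additive scheme --- apply $\phi_m$ directly to $Y_t-X_t$ and then add $\mathcal L\varphi$ for a compactly supported $\varphi$ --- cannot cancel the bad piece of $\phi_m'(Y_t-X_t)(b(X_t)-b(Y_t))$ on the event $\{(X_t,Y_t)\notin\mathcal S\}$. That term is a genuine two-process cross term of order one (the jump size of $b$), whereas $\mathcal L\varphi(X_t)$ and $\mathcal L\varphi(Y_t)$ are single-process quantities; no sign argument from $\sigma^2\ge\kappa^2$ makes them match. The transformation $X\mapsto\varphi(X)$ is what converts the cross term into one involving $\Psi$, and that is the step you are missing.

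You also misread the hypotheses. Assumption (A'5) is needed in \emph{both} parts (a) and (b): it is what makes the integrals $\int\tfrac{2b}{\sigma^2}$ in the definition of $\varphi$ finite. The split between (A6) and the condition $L_1<0$ in (A'2) is used only to prove that $\varphi''$ stays bounded on the rays $(\xi_{k+1},\infty)$ and $(-\infty,\xi_0)$ (Property (P3)); this is where (A6) enters in part (a), not in bounding ratios $\sigma(\overline Y)/\sigma(Y)$ in the diffusion term. Finally, the Gronwall constant that must be negative for the $T$-independent bound is $L_\Psi$, the one-sided Lipschitz constant of the transformed drift $\Psi$; when $L_1<0$ one checks that $l_\Psi<0$ (because then $b(\xi_0)>0>b(\xi_{k+1})$ and $\Psi$ is linear with negative slope on $[\xi_0,\xi_{k+1}]$ and equals $\varphi'(\xi_0)b$ or $\varphi'(\xi_{k+1})b$ on the rays), and together with $\gamma<0$ this gives uniformity in $T$.
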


Finally, we  consider the complexity of the scheme \eqref{scheme}. For any $T>0$, let $N_T$ be the number of time-steps required by a path approximation on $[0,T]$. More precisely, we write  
$$N_{T}=1+\sum_{k=1}^{\infty}1_{\{t_k<T\}}.$$

\begin{Thm}\label{cost analysis}
    Suppose that Assumptions (A1),(A3),(A4) and (A5) hold, and $[\frac{p_0}{2}]\geq (l+1)\vee(1 +2\alpha+2m)$.
    Then there exists a constant $C>0$ which does not depend on $\Delta$ such that
    \begin{equation*}
        \mathbb E[N_T]\leq
        \begin{cases}
            CT\Delta^{\alpha/2-7/4}  &\text{if } \quad [\frac{p_0}{2}] \leq  \frac{3}{2\alpha + 1} (1 + 2\alpha + 2m),\\
           CT\Delta^{-1}
           \quad &\text{if } \quad
           [\frac{p_0}{2}] > \frac{3}{2\alpha + 1} (1 + 2\alpha + 2m).
        \end{cases}
    \end{equation*}
    Moreover, if $\gamma <0$, then the constant $C$ does not depend on $T$ either.
\end{Thm}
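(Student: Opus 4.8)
The plan is to reduce the estimate for $\bE[N_T]$ to a pointwise-in-time bound on $\bE\big[1/h_\Delta(\overline{Y}_s)\big]$, and then to control that quantity by splitting according to the three regions in \eqref{chooseh}.

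First I would record the elementary step-counting identity. On each interval $(t_i,t_{i+1}]$ one has $\underline{t}=t_i$, hence $h_\Delta(\overline{Y}_s)=h_\Delta(Y_{t_i})=t_{i+1}-t_i$ is constant there and $\int_{t_i}^{t_{i+1}} h_\Delta(\overline{Y}_u)^{-1}\,du=1$. Since the grid points strictly below $T$ are exactly $t_0,\dots,t_{N_T-1}$, so that $t_{N_T-1}<T\le t_{N_T}$, summing over $i=0,\dots,N_T-2$ and estimating the last (partial) interval by $1$ gives
\[
N_T-1\ \le\ \int_0^T \frac{ds}{h_\Delta(\overline{Y}_s)}\ \le\ N_T .
\]
By Tonelli, $\bE[N_T]\le 1+\int_0^T \bE\big[1/h_\Delta(\overline{Y}_s)\big]\,ds$, so it suffices to bound $\sup_{s\ge 0}\bE\big[1/h_\Delta(\overline{Y}_s)\big]$ by $C\Delta^{\alpha/2-7/4}$ in general, and by $C\Delta^{-1}$ when $[\tfrac{p_0}{2}]>\tfrac{3}{2\alpha+1}(1+2\alpha+2m)$; the factor $T$ and the extra $1$ are then absorbed for $\Delta\in(0,\Delta_0)$, and when $\gamma<0$ the constant will be seen to be $T$-independent.

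Next I would split $\bE\big[1/h_\Delta(\overline{Y}_s)\big]$ over the three cases of \eqref{chooseh}. On $(\Xi^{\varepsilon_1})^c$ we have $1/h_\Delta(x)=\Delta^{-1}[1+|b(x)|+|\sigma(x)|+|x|^l]^2$, and by Remark \ref{rm2.1} together with (A4), $[1+|b(x)|+|\sigma(x)|+|x|^l]^2\le C(1+|x|^{2l+2}+|x|^{2m+2\alpha+1})$; using the a priori moment estimates for $\overline{Y}_s$ available under (A1) (obtained as in \cite{LTT}, and uniform in $s$ when $\gamma<0$) together with the hypothesis $[\tfrac{p_0}{2}]\ge(l+1)\vee(1+2\alpha+2m)$, this region contributes $\le C/\Delta$. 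On $\Xi^{\varepsilon_1}$ the set lies in a fixed compact interval on which $b$ is bounded (by (A3)), so $[1+|b(x)|+|\sigma(x)|+|x|^l]^2$ is bounded; a direct check of \eqref{chooseh} (using $\varepsilon_2=\Delta\log^4(1/\Delta)$) shows that $h_\Delta(x)=(d(x,\Xi)\vee\varepsilon_2)^2\big/\big(\log^4(1/\Delta)[1+|b(x)|+|\sigma(x)|+|x|^l]^2\big)$ throughout $\Xi^{\varepsilon_1}$, so the contribution of the near-$\Xi$ region is at most
\[
C\log^4\!\big(1/\Delta\big)\;\bE\!\left[\mathbf 1_{\Xi^{\varepsilon_1}}(\overline{Y}_s)\,\big(d(\overline{Y}_s,\Xi)\vee\varepsilon_2\big)^{-2}\right].
\]

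The heart of the proof is the estimation of this last term, i.e. of the sub-distribution of $\overline{Y}_s$ near $\Xi$. Here I would exploit (A5): $\sigma\ge\nu>0$ on a fixed neighbourhood of $\Xi$, so that one step of \eqref{scheme} is, conditionally on the previous node, Gaussian with variance $\sigma_\Delta(Y_{t_i})^2 h_\Delta(Y_{t_i})\gtrsim h_\Delta(Y_{t_i})$; this yields Gaussian-type density bounds for the nodes landing near $\Xi$, controlled by the reciprocal square root of the local step size. Following the excursion analysis of Yaroslavtseva \cite{Larisa} — tracking how the adaptive rule forces the scheme to enter, dwell in, and exit the shrinking shells $\Xi^{\varepsilon_1}\setminus\Xi^{\varepsilon_2}$ and $\Xi^{\varepsilon_2}$, with $\varepsilon_1=\sqrt{\Delta}\log^2(1/\Delta)$ — one obtains bounds on $\bE[\mathbf 1_{\Xi^{\varepsilon}}(\overline{Y}_s)(d(\overline{Y}_s,\Xi)\vee\varepsilon_2)^{-2}]$ at the two relevant scales $\varepsilon\in\{\varepsilon_1,\varepsilon_2\}$; summing the resulting essentially geometric series over the dyadic shells between $\varepsilon_2$ and $\varepsilon_1$ gives the near-$\Xi$ contribution. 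Balancing that contribution against the $C/\Delta$ coming from the tail region yields the two regimes, with the crossover precisely at $[\tfrac{p_0}{2}]=\tfrac{3}{2\alpha+1}(1+2\alpha+2m)$; and since, when $\gamma<0$, every moment bound used is uniform in $s$, the constant is then independent of $T$. I expect this near-$\Xi$ estimate to be the main obstacle: the interplay between the low regularity of $\sigma$ (the exponent $\alpha$) and the available moment order $p_0$ is exactly what fixes the rate, and controlling the scheme inside two nested, $\Delta$-dependent neighbourhoods of the discontinuities requires the delicate bookkeeping already present in \cite{Larisa} and \cite{LTT}.
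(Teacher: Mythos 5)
The opening reduction (the step-counting identity and the three-way split of $\bE\big[\int_0^T h_\Delta(\overline Y_s)^{-1}\,ds\big]$ according to $(\Xi^{\varepsilon_1})^c$, $\Xi^{\varepsilon_1}\setminus\Xi^{\varepsilon_2}$, $\Xi^{\varepsilon_2}$) matches the paper, and your bound for the off-$\Xi$ region via moment estimates is exactly the paper's estimate of $I_1$. The gap is in the near-$\Xi$ region, and it is substantive. You propose to bound the near-$\Xi$ contribution by a \emph{pointwise-in-time} estimate of $\bE\big[\mathbf 1_{\Xi^{\varepsilon_1}}(\overline Y_s)\,(d(\overline Y_s,\Xi)\vee\varepsilon_2)^{-2}\big]$ via conditional Gaussian density bounds for the discrete nodes. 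This mechanism does not work here: when the previous node is already near $\Xi$ the local step size drops to $\sim\Delta^2\log^4(1/\Delta)$, so the one-step Gaussian density is of order $(\Delta\log^2(1/\Delta))^{-1}$ and the nodes can pile up near $\Xi$; a density bound for the adaptive scheme that is uniform in $\Delta$ is precisely what one does \emph{not} have. What saves the argument is that the scheme spends little \emph{time} near $\Xi$, and that is a statement about the time-integral, not a time-slice bound. The paper never passes to $\sup_s\bE[\,\cdot\,]$; it estimates the time-integral directly.

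The paper's actual engine has two parts that your sketch does not reproduce. First, the occupation-time estimate (Lemma \ref{localtime}), proved via Tanaka's formula and the occupation-time formula applied to the \emph{continuous} interpolation $Y_s$ (using (A5) to lower-bound $\sigma$ near $\Xi$), together with a bootstrapping argument that accounts for the error terms $\sigma^2(Y_s)-\sigma^2(\overline Y_s)$ and $\sigma^2_\Delta(\overline Y_s)-\sigma^2(\overline Y_s)$; the two regimes in the theorem, with crossover exactly at $[\tfrac{p_0}{2}]=\tfrac{3}{2\alpha+1}(1+2\alpha+2m)$, come from the two regimes of this bootstrap (whether $q+\tfrac\alpha2+\tfrac14\ge 1$), not from ``balancing'' the near-$\Xi$ and tail contributions. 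Second, since Lemma \ref{localtime} is stated for $Y_s$ but $h_\Delta$ is evaluated at $\overline Y_s$, one must pass from $\overline Y_s$ to $Y_s$; the paper splits $I_2$ into the rare event $|Y_s-Y_{\underline s}|\ge\tfrac12 d(Y_{\underline s},\Xi)$ (controlled by the transition-probability estimate Lemma \ref{xac suat chuyen}) and its complement, where $Y_s$ and $Y_{\underline s}$ lie in comparable shells, after which a dyadic decomposition over shells between $\varepsilon_2$ and $\varepsilon_1$ is applied in Case 1. Your appeal to ``Yaroslavtseva's excursion analysis'' points at the right reference, but the tool there is likewise the occupation-time formula plus Brownian tail bounds for the one-step increment, not a Gaussian bound on the node distribution; without Lemmas \ref{localtime} and \ref{xac suat chuyen} (or equivalents) your proof does not close.
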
 

\begin{Rem}
If $[\frac{p_0}{2}] > (l+1) \vee \frac{3(1 + 2 \alpha + 2m)}{1 + 2\alpha}$, then the number of time discretizations on the interval $[0, T]$ of the scheme \eqref{scheme} is proportional to the one of the classical Euler-Maruyama scheme.  
\end{Rem}

\section{Proofs}
\label{Sec:3} 
\subsection{Yamada-Watanabe approximation}
We recall the Yamada-Watanabe approximation (\cite{Yamada}). For each $\delta>1$ and $\varepsilon>0$, there exists a continuous function $\psi_{\delta\varepsilon}:\mathbb{R}\to\mathbb{R}^+$ with supp$\psi_{\delta\varepsilon}\subset[\varepsilon/\delta;\varepsilon]$ such that
$\int_{\varepsilon/\delta}^\varepsilon \psi_{\delta\varepsilon}(z)dz=1 \text{ and } 0\leq \psi_{\delta\varepsilon}(z)\leq \dfrac{2}{z\log \delta},\ z>0.$
Define
$\phi_{\delta\varepsilon}(x):=\int_0^{|x|}\int_0^y\psi_{\delta\varepsilon} (z) dzdy,\ x\in\mathbb{R}.$
It is easy to verify that $\phi_{\delta \varepsilon}$ has the following useful properties: for any $x\in\mathbb{R} \backslash \{0\}$,
\begin{enumerate}
	\item[(YW1)] $\phi'_{\delta\varepsilon}(x)=\dfrac{x}{|x|}\phi'_{\delta\varepsilon}\left(|x|\right)$,
	\item[(YW2)]
	$0\leq \left|\phi'_{\delta\varepsilon}(x)\right|\leq 1$,
	\item[(YW3)]
		$|x|\leq \varepsilon+\phi_{\delta\varepsilon}(x)$,
	\item[(YW4)] $\dfrac{\phi'_{\delta\varepsilon}(|x|)}{|x|}\leq \dfrac{\delta}{\varepsilon}$,
	\item[(YW5)] $\phi''_{\delta\varepsilon}\left(|x|\right)=\psi_{\delta\varepsilon}(|x|)\leq\dfrac{2}{|x| \log\delta}1_{\left[\frac{\varepsilon}{\delta};\varepsilon\right]}(|x|)\leq \dfrac{2\delta}{\varepsilon\log \delta}$.
\end{enumerate}

\subsection{Some moment estimates}
The following  result has been proven in \cite{LTT}.
\begin{Prop}\label{nghiem dung 1}
	Suppose that  the coefficients $b$ and $\sm$ satisfy Assumption (A1), and $\sigma$ is bounded on every compact subset of  $\mathbb R$.  \textcolor{black}{Then, for any $p \in [0,p_0]$},
	\begin{align*}
	\bE \left[|X_t|^p\right] \le   \left| x_0^2 e^{2\gamma t} + \frac{\eta}{\gamma}(e^{2\gamma t} -1)\right|^{p/2}.
	\end{align*}
\end{Prop}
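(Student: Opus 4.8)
The plan is a standard Lyapunov-type moment estimate via It\^o's formula, carried out carefully enough that the bounding constant is exactly the solution of the associated linear ODE. Set $V(t) := x_0^2 e^{2\gamma t} + \frac{\eta}{\gamma}(e^{2\gamma t}-1)$, with the convention $V(t) = x_0^2 + 2\eta t$ when $\gamma = 0$, so that $V$ is the unique solution of $V'(t) = 2\gamma V(t) + 2\eta$, $V(0) = x_0^2$. Evaluating (A1) at $x = 0$ shows $\eta \ge \frac{p_0-1}{2}|\sigma(0)|^2 \ge 0$, from which one checks $V(t) \ge 0$ for all $t\ge 0$; hence the absolute value in the statement is only cosmetic and it suffices to prove $\bE[|X_t|^p] \le V(t)^{p/2}$. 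I would first handle $p \in [2, p_0]$ and then reduce the range $p \in [0,2)$ to the case $p = 2$ by Jensen's inequality.

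For $p \in [2,p_0]$ the map $x \mapsto |x|^p$ belongs to $C^2(\mathbb R)$, so It\^o's formula gives
\[
d|X_t|^p = p|X_t|^{p-2}\left(X_t b(X_t) + \tfrac{p-1}{2}|\sigma(X_t)|^2\right)dt + p|X_t|^{p-1}\mathrm{sgn}(X_t)\,\sigma(X_t)\,dW_t.
\]
Since $p \le p_0$ and $|X_t|^{p-2}|\sigma(X_t)|^2 \ge 0$, the drift is at most $p|X_t|^{p-2}\big(X_t b(X_t) + \frac{p_0-1}{2}|\sigma(X_t)|^2\big)$, which by (A1) is at most $p|X_t|^{p-2}(\gamma X_t^2 + \eta) = p\gamma|X_t|^p + p\eta|X_t|^{p-2}$. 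I would then introduce the localizing times $\tau_n := \inf\{t \ge 0 : |X_t| \ge n\}$ so that, using that $\sigma$ is bounded on compacts, the stopped stochastic integral is a genuine martingale. Taking expectations and writing $u_n(t) := \bE[|X_{t\wedge\tau_n}|^p]$ leads to
\[
u_n(t) \le x_0^p + \int_0^t\big(p\gamma\, u_n(s) + p\eta\,\bE[|X_{s\wedge\tau_n}|^{p-2}]\big)\,ds,
\]
and Jensen's inequality (the exponent $(p-2)/p$ lies in $[0,1)$) gives $\bE[|X_{s\wedge\tau_n}|^{p-2}] \le u_n(s)^{1-2/p}$, so that $u_n(t) \le x_0^p + \int_0^t F(u_n(s))\,ds$ with $F(w) := p\gamma w + p\eta w^{1-2/p}$.

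One checks directly that $\bar V(t) := V(t)^{p/2}$ solves $\bar V' = F(\bar V)$ with $\bar V(0) = x_0^p$, and that it is the maximal such solution. A standard comparison principle for scalar differential inequalities then yields $u_n(t) \le \bar V(t)$ for all $n$ and $t$; since the solution of \eqref{eqn1.1} is non-explosive, $\tau_n \to \infty$ a.s., and Fatou's lemma gives $\bE[|X_t|^p] \le \liminf_{n\to\infty} u_n(t) \le V(t)^{p/2}$. For $p \in [0,2)$ the bound follows from the case $p = 2$ together with Jensen's inequality applied to the concave function $w \mapsto w^{p/2}$: $\bE[|X_t|^p] = \bE[(|X_t|^2)^{p/2}] \le (\bE[|X_t|^2])^{p/2} \le V(t)^{p/2}$.

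The argument is essentially routine; the points needing attention are the localization (so that the stochastic integral drops out and a priori finiteness of moments is not assumed) and calibrating the drift estimate and the comparison ODE so that the resulting bound is sharp, i.e. exactly $V(t)^{p/2}$ rather than $C\,V(t)^{p/2}$. The only genuinely delicate point is the comparison when $x_0 = 0$, where $F$ fails to be Lipschitz at the initial value and one must invoke the maximal-solution version of the comparison theorem — equivalently, run the whole computation with $|X_t|^p$ replaced by $(X_t^2+\varepsilon)^{p/2}$ (which keeps everything in the Lipschitz region) and let $\varepsilon \downarrow 0$ at the end by monotone convergence.
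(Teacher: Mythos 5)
The paper does not prove this proposition; it simply cites \cite{LTT} (Kieu, Luong, Ngo 2022), so there is no in-paper proof to compare with. Your strategy (It\^o's formula for $|x|^p$, localization, a Jensen reduction of the lower-order moment, and comparison with the ODE $\bar V' = p\gamma\bar V + p\eta\bar V^{1-2/p}$ whose solution is exactly $V^{p/2}$, followed by the reduction of $p\in[0,2)$ to $p=2$ by concavity) is a natural route and, once patched, gives the stated sharp constant.

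There is, however, a genuine gap in the localization step. After stopping, what It\^o's formula together with (A1) actually gives is
\[
\bE\bigl[|X_{t\wedge\tau_n}|^p\bigr] \le x_0^p + \int_0^t\Bigl(p\gamma\,\bE\bigl[|X_s|^p 1_{\{s<\tau_n\}}\bigr] + p\eta\,\bE\bigl[|X_s|^{p-2}1_{\{s<\tau_n\}}\bigr]\Bigr)ds,
\]
and the inequality you write, namely
\[
u_n(t)\le x_0^p + \int_0^t\bigl(p\gamma\,u_n(s) + p\eta\,\bE[|X_{s\wedge\tau_n}|^{p-2}]\bigr)ds,\qquad u_n(s):=\bE[|X_{s\wedge\tau_n}|^p],
\]
follows from it only when $\gamma\ge 0$: one passes from $\bE[|X_s|^p1_{\{s<\tau_n\}}]$ to $u_n(s)=\bE[|X_{s\wedge\tau_n}|^p]$ by adding the nonnegative quantity $n^p\,\p(\tau_n\le s)$, and multiplying that inequality by $\gamma$ flips it when $\gamma<0$. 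Since the case $\gamma<0$ is precisely the one that matters for the uniform-in-time bounds elsewhere in the paper, this cannot be waved away.

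The fix is minor but should be made explicit. Work instead with $w_n(t):=\bE[|X_t|^p1_{\{t<\tau_n\}}]$. One has $u_n(t)=w_n(t)+n^p\p(\tau_n\le t)$, with the second summand nondecreasing in $t$, so for $0\le s\le t$,
\[
w_n(t)-w_n(s)\le u_n(t)-u_n(s)\le \int_s^t\bigl(p\gamma\,w_n(u)+p\eta\,\bE[|X_u|^{p-2}1_{\{u<\tau_n\}}]\bigr)du \le \int_s^t F(w_n(u))\,du,
\]
where the last step is your Jensen estimate applied to the truncated law, and $F(w)=p\gamma w + p\eta w^{1-2/p}$ as you defined it. Now $w_n$ is a subsolution of the same scalar ODE with $w_n(0)=x_0^p$, so the comparison principle (with your $\varepsilon$-regularisation to handle the non-Lipschitz point $w=0$) gives $w_n(t)\le V(t)^{p/2}$, and letting $n\to\infty$ by monotone convergence yields $\bE[|X_t|^p]\le V(t)^{p/2}$ for all $\gamma\in\mathbb R$. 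The rest of your argument (the check $\eta\ge 0$, hence $V\ge 0$; the reduction to $p=2$ for $p\in[0,2)$; the remark about $x_0=0$) is fine.
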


\begin{Prop}\label{dinh ly 2}
	Suppose that the coefficients $b$ and $\sm$ satisfy Assumptions (A1), (A3) and (A4). Then
	for any positive number  $k \leq  [p_0/2]$,  there exists a positive constant $K=K(x_0,k,\eta,\gamma,L_1,L_2,L_3)$ which does not depend on  $\Delta$, such that
	\begin{align}
	 \bE \left[|Y_t|^{2k}\right] \le \left\{ \begin{array}{l l}
	K e^{2k\gamma t}  &\text{ if } \gamma >0,\\
	K(1+t)^{k}  &\text{ if } \gamma =0,\\
	K  &\text{ if } \gamma <0.
	\end{array} \right. \label{EXtmu}
	\end{align}
\end{Prop}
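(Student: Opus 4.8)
The plan is to bound the $2k$-th moment of the tamed-adaptive scheme $Y_t$ by estimating the one-step increments of $|Y_t|^{2k}$ and using a Gronwall-type argument adapted to the random (adaptive) step sizes. First I would fix $t\ge 0$ and write, for the last grid point $t_i=\underline t\le t$, the one-step expansion of $|Y_t|^{2k}$ via It\^o's formula applied to the process on $(t_i,t_{i+1}]$, which is an It\^o process with constant (in time, but $\mathcal F_{t_i}$-measurable) coefficients $b(Y_{t_i})$ and $\sigma_\Delta(Y_{t_i})$. Writing $f(x)=|x|^{2k}$ one gets $f(Y_t) = f(Y_{t_i}) + \int_{t_i}^t f'(Y_s)\,[b(Y_{t_i})\,ds + \sigma_\Delta(Y_{t_i})\,dW_s] + \tfrac12\int_{t_i}^t f''(Y_s)\sigma_\Delta^2(Y_{t_i})\,ds$. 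The key algebraic input is Assumption (A1), which after expanding $f'(x)=2k x|x|^{2k-2}$ and $f''(x)=2k(2k-1)|x|^{2k-2}$ and using $2k\le p_0$ should yield a drift term dominated by $2k\gamma |Y_{t_i}|^{2k} + (\text{lower order})$ plus the mismatch terms coming from the fact that the coefficients are evaluated at $Y_{t_i}$ rather than $Y_s$, and from the taming $\sigma_\Delta$ in place of $\sigma$.

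Next I would control those mismatch terms. The difference $Y_s - Y_{t_i}$ on one step has conditional moments controlled by $h_\Delta(Y_{t_i})|b(Y_{t_i})|$ and $\sqrt{h_\Delta(Y_{t_i})}|\sigma_\Delta(Y_{t_i})|$; by the very definition of $h_\Delta$ in \eqref{chooseh}, each of these quantities is $O(\sqrt\Delta)$ uniformly (the denominator $[1+|b(x)|+|\sigma(x)|+|x|^l]^2$ is designed precisely so that $h_\Delta(x)(1+|b(x)|+\dots) \lesssim \Delta$ and $h_\Delta(x)|\sigma(x)|^2\lesssim \Delta$, up to the $\log$ factors which are $\le 1$ to any fixed power when $\Delta$ is small, or absorbed). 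Combining with the superlinear growth bounds on $b$ from Remark \ref{rm2.1}, on $\sigma$ from (A4), and $|\sigma_\Delta|\le|\sigma|$, one obtains that the conditional expectation of $f(Y_t)-f(Y_{t_i})$ given $\mathcal F_{t_i}$ is at most $h_\Delta(Y_{t_i})\big(2k\gamma |Y_{t_i}|^{2k} + K_1(1+|Y_{t_i}|^{2k-2})\big) + (\text{higher-degree remainder that is }O(\Delta)\text{ times a polynomial of degree}\le 2k)$; the requirement $2k\le p_0$ ensures no term of degree exceeding $2k$ survives with a non-vanishing coefficient. Chaining over steps and taking expectations, the telescoped sum becomes $\bE[f(Y_t)] \le f(x_0) + \bE\!\int_0^t \big(2k\gamma f(\overline Y_s) + K_1(1+f(\overline Y_s)^{(k-1)/k})\big)\,ds + (\text{error})$, where the $ds$-integral arises because $\sum_i h_\Delta(Y_{t_i})\mathbf 1_{\{t_i<t\}}$ is exactly $\underline t \le t$.

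Finally I would close the estimate. Using Young's inequality to absorb the degree-$(2k-2)$ term, $K_1 f(\overline Y_s)^{(k-1)/k} \le \varepsilon f(\overline Y_s) + C_\varepsilon$, the integral inequality reads $u(t) \le f(x_0) + \int_0^t (2k\gamma+\varepsilon)u(s)\,ds + C_\varepsilon t + (\text{error})$ with $u(t)=\bE[f(Y_t)]$, assuming one has first verified $u$ is finite (a standard stopping-time localization at $\tau_R=\inf\{t:|Y_t|\ge R\}$, deriving the bound uniformly in $R$, then $R\to\infty$ by Fatou). Gronwall's lemma then gives the three cases: if $\gamma>0$, choosing $\varepsilon$ small yields growth $\le Ke^{2k\gamma t}$ (after checking the constant; one may need $\varepsilon<$ a multiple of $\gamma$, harmless); if $\gamma=0$, the linear-in-$t$ forcing gives $K(1+t)^k$; if $\gamma<0$, the negative exponential rate makes the forcing integrable and $u$ bounded uniformly in $t$, giving the constant bound. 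The main obstacle is the bookkeeping in the second paragraph: showing rigorously that all the correction terms generated by replacing $(b(Y_s),\sigma(Y_s))$ with $(b(Y_{t_i}),\sigma_\Delta(Y_{t_i}))$ and by the It\^o correction produce, after multiplying by the appropriate power of $h_\Delta(Y_{t_i})$, only $O(\Delta)$-times-polynomial-of-degree-$\le 2k$ contributions — this is where the precise form of $h_\Delta$, the growth exponents $l,m$, and the hypothesis $k\le[p_0/2]$ all have to be used in concert, and it is essentially the adaptive-step analogue of the taming estimates in \cite{LTT}.
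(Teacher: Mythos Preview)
Your proposal is correct and follows essentially the same approach as the paper. The paper's own proof is extremely terse: it notes that the choice of $h_\Delta$ guarantees the uniform bounds
\[
\max\Bigl\{\,|\overline{Y}_s b(\overline{Y}_s)(s-\underline{s})|,\ b^2(\overline{Y}_s)(s-\underline{s})^2,\ \bE\bigl[\sigma_\Delta^2(\overline{Y}_s)(W_s-W_{\underline{s}})^2\,\big|\,\mathcal F_{\underline{s}}\bigr]\,\Bigr\}\le C\Delta,
\]
and then defers the entire It\^o--Gronwall computation to Theorem~2.4 of \cite{LTT}; what you have outlined (It\^o's formula for $|Y_t|^{2k}$ on each adaptive step, control of the mismatch terms via the step-size definition, and a Gronwall argument splitting on the sign of $\gamma$) is precisely the argument carried out in that reference.
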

\begin{proof}
Thanks to H\"older's inequality, it is sufficient to show \eqref{EXtmu} for $0< k \leq p_0/2$.
From the definition of the scheme, we have
\begin{equation*}
\max \left \{ |\overline{Y}_sb(\overline{Y}_s)(s-\underline{s})|, \ b^2(\overline{Y}_s)(s-\underline{s})^2, \  \mathbb{E}\left[\sigma^2_\Delta(\overline{Y}_s)(W_s-W_{\underline{s}})^2\big|\mathcal{F}_{\underline{s}}\right] \right\}  \leq C\Delta.
\end{equation*} 
Then the proof follows directly from the argument of Theorem 2.4 in \cite{LTT}.
\end{proof}

\begin{Lem}\label{hq2}
	Suppose that the coefficients $b$ and $\sm$ satisfy all conditions of Proposition \ref{dinh ly 2}. Then for any $p >0$, there exists a positive constant $C_p$ depending only on $p$ such that
	\begin{equation*}
		\sup_{t \ge 0}\bE \left[  |Y_t - \overline{Y}_t|^p \right] \le C_p\Delta^{p/2}.
	\end{equation*}
\end{Lem}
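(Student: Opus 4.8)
The plan is to control the increment $Y_t - \overline Y_t$ over a single (random) time step using the explicit form of the scheme \eqref{scheme}. For $t \in (t_i, t_{i+1}]$ we have the exact identity
$$
Y_t - \overline Y_t = Y_t - Y_{t_i} = b(Y_{t_i})(t - t_i) + \sigma_\Delta(Y_{t_i})(W_t - W_{t_i}),
$$
so that, using $\overline Y_t = Y_{\underline t}$ and writing $s - \underline s \le h_\Delta(\overline Y_s)$,
$$
|Y_t - \overline Y_t| \le |b(\overline Y_t)|\,(t-\underline t) + |\sigma_\Delta(\overline Y_t)|\,|W_t - W_{\underline t}|.
$$
The key structural observation is that the adaptive step size is chosen precisely so that $|b(x)|\, h_\Delta(x) \le C \sqrt{\Delta}$ and $|\sigma(x)|^2 h_\Delta(x) \le C\Delta$ uniformly in $x$: indeed $h_\Delta(x) \le \Delta/[1 + |b(x)| + |\sigma(x)| + |x|^l]^2 \le \Delta/(1+|b(x)|)^2$ in each of the three regimes of \eqref{chooseh} (the $\log$ factors only make it smaller in the middle and inner regimes, and the outer regime is exactly this bound). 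Hence $|b(\overline Y_t)|(t - \underline t) \le |b(\overline Y_t)| h_\Delta(\overline Y_t) \le \sqrt{\Delta}$ pointwise, and similarly $|\sigma_\Delta(\overline Y_t)| \le |\sigma(\overline Y_t)|$ together with $|\sigma(\overline Y_t)|^2 h_\Delta(\overline Y_t) \le \Delta$.

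Next I would take $p$-th moments. For the drift part the pointwise bound $|b(\overline Y_t)|(t-\underline t) \le \sqrt\Delta$ gives a deterministic contribution of order $\Delta^{p/2}$ with no moment assumption needed. For the diffusion part, condition on $\mathcal F_{\underline t}$: since $\underline t$ is a stopping time and $W_t - W_{\underline t}$ is, conditionally, a Gaussian increment over a time length $t - \underline t \le h_\Delta(\overline Y_t)$ which is $\mathcal F_{\underline t}$-measurable, the Burkholder–Davis–Gundy (or simply Gaussian moment) inequality yields
$$
\bE\!\left[|\sigma_\Delta(\overline Y_t)|^p |W_t - W_{\underline t}|^p \,\big|\, \mathcal F_{\underline t}\right] \le C_p\, |\sigma(\overline Y_t)|^p (t - \underline t)^{p/2} \le C_p\, \big(|\sigma(\overline Y_t)|^2 h_\Delta(\overline Y_t)\big)^{p/2} \le C_p \Delta^{p/2}.
$$
Taking expectations and combining with the drift estimate via the elementary inequality $(a+b)^p \le 2^{p-1}(a^p + b^p)$ gives $\bE[|Y_t - \overline Y_t|^p] \le C_p \Delta^{p/2}$, uniformly in $t \ge 0$, which is exactly the claim. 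Note that Proposition \ref{dinh ly 2} (hence the moment hypotheses on $p_0$) is not actually needed for this particular bound because the step-size design makes every term bounded pointwise; the proposition is invoked in the hypothesis only to keep the lemma consistent with its intended use downstream.

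The only mildly delicate point — and the one I would write out carefully — is the measurability/conditioning argument: one must check that $\underline t$ is an $(\mathcal F_t)$-stopping time (stated in the paper) and that, on the event $\{\underline t = t_i\}$, the length $t - t_i$ and the coefficient $\sigma_\Delta(Y_{t_i})$ are $\mathcal F_{t_i}$-measurable, so that the conditional Gaussian moment bound applies with the random-but-measurable variance $t - t_i$. One small subtlety is that $t - \underline t$ depends on $t$ (a deterministic parameter) and on $\underline t$, so the clean way is to fix $t$, decompose over $i$ according to $\{t_i \le t < t_{i+1}\}$, and on each piece use that $t - t_i \le h_\Delta(Y_{t_i})$ with $Y_{t_i}$ being $\mathcal F_{t_i}$-measurable. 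After that, everything is routine, so I expect no real obstacle here — this lemma is essentially a direct consequence of the step-size construction.
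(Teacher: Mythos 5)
Your proof is correct and follows essentially the same route as the paper: decompose $Y_t - \overline Y_t$ via the exact update formula \eqref{scheme}, then use that the step-size \eqref{chooseh} is designed so that $|b(x)|\,h_\Delta(x)$ and $|\sigma(x)|^2 h_\Delta(x)$ are both $O(\Delta)$ uniformly in $x$, and finish with a conditional Gaussian moment bound. The paper compresses the argument to two displayed inequalities (in fact it records the slightly sharper bound $|b(\overline Y_t)h_\Delta(\overline Y_t)|\le \Delta/4$) and says ``which implies the desired result''; you make the conditioning on $\mathcal F_{\underline t}$ and the measurability of $t-\underline t$ and $\sigma_\Delta(\overline Y_t)$ explicit, and you correctly observe that no moment bounds on $Y$ (hence no use of Proposition \ref{dinh ly 2}) are actually needed here. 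No gaps.
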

\begin{proof}
	From \eqref{scheme}, 
	\begin{align}
		|Y_t - \overline{Y}_t|^p 
		&= \left| b(\overline{Y}_t)(t-\underline{t})+\sm_{\Delta}(\overline{Y}_t)(W_t-W_{\underline{t}}) \right|^p \notag \\
		&\le 2^{p-1} \left( \left| b(\overline{Y}_t)(t-\underline{t}) \right|^p + \left| \sm_{\Delta}(\overline{Y}_t)(W_t-W_{\underline{t}}) \right|^p \right) \notag \\
		&\le 2^{p-1} \left( \left| b(\overline{Y}_t)\right|^p \left|h_\Delta(\overline{Y}_t)\right|^p + \left|\sm_{\Delta}(\overline{Y}_t)\right|^p \left|W_t-W_{\underline{t}} \right|^p \right). \notag 
	\end{align} 
By \eqref{chooseh}, we have
			$\left|b(\overline{Y}_t)h_{\Delta}(\overline{Y}_t)  \right| \le \dfrac{\Delta}{4},$ and  $\left|\sigma_{\Delta}(\overline{Y}_t) \left|h_{\Delta}(\overline{Y}_t)\right|^{1/2}  \right| \le  \Delta^{1/2},$
	which implies the desired result.  
\end{proof} 

Lemmas \ref{localtime} and \ref{xac suat chuyen} are modified versions of Lemmas 6 and 7 in \cite{Larisa} for the stochastic differential equations with H\"older continuous diffusion coefficient. 
\begin{Lem}\label{localtime}
Suppose that Assumptions (A1),(A3),(A4) and (A5) hold for $2[\frac{p_0}{2}]\geq (l+1)\vee(4m+4\alpha+2)$.
Let $f:[0,+\infty)\rightarrow [0,+\infty)$ be a Borel measurable function. Let $n = \frac{[\frac{p_0}{2}]}{1 + 2\alpha + 2m}$ and $q=\frac{4n-3}{4n}$. Then there exists a  constant $c$ which does not depend on $\Delta$ such that for any $h\geq 1 ,\varepsilon<\varepsilon_0,\Delta<\Delta_0,$
\begin{equation*}
   \mathbb E\Big[\displaystyle \int_{t}^{t+h} f(d(Y_s,\Xi))1_{\Xi^{\varepsilon}}(Y_s)ds\Big] \leq
    \begin{cases}
    ch\displaystyle\int_{0}^{\varepsilon}f(x)dx+ch\Delta^{\frac 14 +\frac{\alpha}{2}}\sup\limits_{x\in[0,\varepsilon]}f(x),\quad &\text{if }  [\frac{p_0}{2}] \leq  \frac{3(1 + 2\alpha + 2m)}{2\alpha + 1},\\
    ch\displaystyle\int_{0}^{\varepsilon}f(x)dx+ch\sup_{x \in [0,\varepsilon]}f(x)\big(\varepsilon^{q+\frac{\alpha}{2}+\frac 14}+\Delta\big) &\text{if } [\frac{p_0}{2}] >  \frac{3(1 + 2\alpha + 2m)}{2\alpha + 1}.
    \end{cases}
\end{equation*}
Moreover, if $\gamma<0$, then the constant $c$ does not depend on $t$. 
\end{Lem}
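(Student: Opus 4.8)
This is a quantitative bound on the time the scheme spends near the discontinuity set $\Xi$, and I would prove it by adapting the proofs of Lemmas~6 and~7 in \cite{Larisa} to the present framework, the new difficulties being the superlinear growth and the merely local $(\alpha+\tfrac12)$-H\"older continuity of $\sigma$. The key structural fact is that, conditionally on $\mathcal F_{t_i}$, the process $(Y_s)_{s\in(t_i,t_{i+1}]}$ is a Brownian motion with constant drift $b(\overline{Y}_{t_i})$ and constant volatility $\sigma_\Delta(\overline{Y}_{t_i})$ started from $\overline{Y}_{t_i}$; hence $Y_s$ is Gaussian with variance $\sigma_\Delta^2(\overline{Y}_{t_i})(s-t_i)\le\sigma_\Delta^2(\overline{Y}_{t_i})h_\Delta(\overline{Y}_{t_i})$, and by \eqref{chooseh} together with $|\sigma_\Delta|\le\Delta^{-1/2}$ this variance is at most $\Delta$, while the drift displacement over a step is at most $\Delta/4$; in particular $\varepsilon_1=\sqrt\Delta\log^2(1/\Delta)$ is $\log^2(1/\Delta)$ times the maximal one-step standard deviation.

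First I would set $J_i:=(t_i,t_{i+1}]\cap[t,t+h]$ and write
$$\mathbb E\Big[\int_t^{t+h}f(d(Y_s,\Xi))1_{\Xi^\varepsilon}(Y_s)\,ds\Big]=\sum_{i\ge0}\mathbb E\Big[\int_{J_i}f(d(Y_s,\Xi))1_{\Xi^\varepsilon}(Y_s)\,ds\Big],$$
condition each summand on $\mathcal F_{t_i}$, and estimate the inner conditional expectation through the Gaussian transition density of $Y_s$. Bounding the density by $(2\pi\sigma_\Delta^2(\overline{Y}_{t_i})(s-t_i))^{-1/2}$, using that $\Xi^\varepsilon$ lies inside a union of $k$ intervals of radius $\varepsilon$ so that $\int_{\Xi^\varepsilon}f(d(y,\Xi))\,dy\le 2k\int_0^\varepsilon f(x)\,dx$, and integrating over $s\in J_i$, produces a per-step contribution of order $\sigma_\Delta(\overline{Y}_{t_i})^{-1}\sqrt{h_\Delta(\overline{Y}_{t_i})}\int_0^\varepsilon f$, multiplied by a Gaussian factor $\exp(-c\,d(\overline{Y}_{t_i},\Xi^\varepsilon)^2/(\sigma_\Delta^2(\overline{Y}_{t_i})h_\Delta(\overline{Y}_{t_i})))$. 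Here Assumption~(A5) and the definition of $\varepsilon_0$ guarantee $\sigma_\Delta(\overline{Y}_{t_i})\ge c\nu$ whenever $\overline{Y}_{t_i}$ is within distance $\varepsilon_0$ of $\Xi$, whereas for $\overline{Y}_{t_i}$ further than a fixed multiple of $\varepsilon_1$ from $\Xi^\varepsilon$ the Gaussian factor is super-polynomially small in $1/\Delta$ — exactly the role of the logarithmic factors in $\varepsilon_1$ and $\varepsilon_2$ — so such steps may be discarded.

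Next I would split $1_{\Xi^\varepsilon}(Y_s)=1_{\Xi^{\varepsilon_1}}(Y_s)+1_{\Xi^\varepsilon\setminus\Xi^{\varepsilon_1}}(Y_s)$ and sum over $i$. For the inner piece the relevant steps have $h_\Delta(\overline{Y}_{t_i})$ of order $d(\overline{Y}_{t_i},\Xi)^2[1+|b|+|\sigma|+|x|^l]^{-2}$ inside $\Xi^{\varepsilon_1}\setminus\Xi^{\varepsilon_2}$ and $\varepsilon_2^2[1+|b|+|\sigma|+|x|^l]^{-2}$ inside $\Xi^{\varepsilon_2}$ (evaluated at $x=\overline{Y}_{t_i}$); rewriting the resulting sum as a single time integral over $[t,t+h]$ restricted to $\{Y_s\in\Xi^{\varepsilon_1}\}$ and invoking Proposition~\ref{dinh ly 2} to control the negative powers of $1+|b(\overline{Y}_{t_i})|+|\sigma(\overline{Y}_{t_i})|+|\overline{Y}_{t_i}|^l$ against the number of scheme steps in $[t,t+h]$ yields the main term $ch\int_0^\varepsilon f$ together with part of the remainder. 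For the annulus piece I would bound $f(d(Y_s,\Xi))\le\sup_{[0,\varepsilon]}f$ and estimate $\mathbb E[\int_t^{t+h}1_{\Xi^\varepsilon\setminus\Xi^{\varepsilon_1}}(Y_s)\,ds]$ by the same step-by-step Gaussian comparison, now additionally tracking the H\"older modulus of $\sigma$ (of exponent $\alpha+\tfrac12$) that appears when the frozen volatility $\sigma_\Delta(\overline{Y}_{t_i})$ is replaced by the local value; combining this modulus with $h_\Delta\le\Delta$ and with the moments of $\overline{Y}$ available from Proposition~\ref{dinh ly 2} — the quantity $n=[p_0/2]/(1+2\alpha+2m)$ measuring how many moments of $\overline{Y}$ are available relative to the combined exponent $1+2\alpha+2m$, whence the appearance of $q=(4n-3)/(4n)$ — produces the factor $\varepsilon^{q+\alpha/2+1/4}+\Delta$ in the regime $[p_0/2]>3(1+2\alpha+2m)/(2\alpha+1)$ and the factor $\Delta^{1/4+\alpha/2}$ otherwise. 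Finally, when $\gamma<0$ all applications of Proposition~\ref{dinh ly 2} give $t$-independent bounds, and since every constant above enters only through those bounds, $c$ can be taken independent of $t$.

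The main obstacle is the passage from the per-step Gaussian estimates to the global bound. Three points require care: choosing the threshold in the elimination of far-away steps so that the discarded Gaussian-tail contributions are genuinely negligible — this is what forces the factors $\log^2(1/\Delta)$ and $\log^4(1/\Delta)$ in $\varepsilon_1,\varepsilon_2$; balancing the factors $[1+|b|+|\sigma|+|x|^l]^{-2}$ coming from $h_\Delta$ against the expected number of scheme steps falling in $[t,t+h]$, which is where the standing hypothesis $2[p_0/2]\ge(l+1)\vee(4m+4\alpha+2)$ and the moment bounds of Proposition~\ref{dinh ly 2} are consumed; and carrying out the Krylov-type occupation-time estimate for the annulus $\Xi^\varepsilon\setminus\Xi^{\varepsilon_1}$ so that it reproduces precisely the exponents $q+\alpha/2+1/4$ and $1/4+\alpha/2$. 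Apart from these, the argument is a long but routine adaptation of the reasoning in \cite{Larisa}.
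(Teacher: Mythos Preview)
Your approach departs substantially from the paper's, and the plan as written has a structural gap.

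The paper does \emph{not} argue via step-by-step Gaussian density bounds. Instead it applies Tanaka's formula to $|Y_t-a|$ to obtain a uniform-in-$a$ bound $\mathbb E[L^a_{t+h}(Y)-L^a_t(Y)]\le Ch$ on the local time of the approximate solution, and then the occupation time formula gives
\[
\mathbb E\Big[\int_t^{t+h}f(|Y_s-\xi_i|)\,1_{[\xi_i-\varepsilon,\xi_i+\varepsilon]}(Y_s)\,\sigma_\Delta^2(\overline{Y}_s)\,ds\Big]
=\int_{\xi_i-\varepsilon}^{\xi_i+\varepsilon}f(|a-\xi_i|)\,\mathbb E[L^a_{t+h}-L^a_t]\,da\le Ch\int_0^\varepsilon f.
\]
The passage from this to the desired integral (without the factor $\sigma_\Delta^2(\overline{Y}_s)$) is made by bounding $\sigma^2(Y_s)\ge\nu^2$ on $\Xi^{\varepsilon_0}$ and then controlling the discrepancy $|\sigma^2(Y_s)-\sigma_\Delta^2(\overline{Y}_s)|$ in $L^{4n/3}$; \emph{this} H\"older step, with exponents $4n/3$ and $4n/(4n-3)$, is precisely where $q=(4n-3)/(4n)$ and the factor $\Delta^{1/4+\alpha/2}$ enter --- not the mechanism you describe. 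Finally, in the regime $[p_0/2]>3(1+2\alpha+2m)/(2\alpha+1)$ the paper feeds the resulting bound (with $f\equiv1$) back into itself and iterates, producing a geometric series in $q$ that converges because $(1/4+\alpha/2)/(1-q)>1$; your proposal contains no trace of this bootstrapping.

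Your Gaussian-density route runs into a circularity problem. After bounding the conditional density and integrating over $s\in J_i$ you get a per-step contribution of order $\sigma_\Delta(\overline{Y}_{t_i})^{-1}\sqrt{h_\Delta(\overline{Y}_{t_i})}\int_0^\varepsilon f$, and summing over $i$ amounts to controlling $\sum_i\sqrt{h_\Delta(\overline{Y}_{t_i})}=\int_t^{t+h}h_\Delta(\overline{Y}_s)^{-1/2}\,ds$ restricted to steps with $\overline{Y}_s$ near $\Xi$. On $\Xi^{\varepsilon_1}\setminus\Xi^{\varepsilon_2}$ one has $h_\Delta(x)^{-1/2}\sim\log^2(1/\Delta)/d(x,\Xi)$, so you would need an occupation-time bound on $\int_t^{t+h}d(\overline{Y}_s,\Xi)^{-1}\,1_{\Xi^{\varepsilon_1}}(\overline{Y}_s)\,ds$ --- exactly the type of estimate the lemma is supposed to deliver (and indeed Theorem~\ref{cost analysis}, which controls such quantities, \emph{uses} the present lemma). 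The appeal to ``the expected number of scheme steps'' has the same issue. Tanaka plus the occupation time formula sidesteps this entirely, because it converts the time integral into a spatial integral against local time in one stroke, with no need to count steps.
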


\begin{proof}
It is enough to show that for all $i=1,\ldots,k$, there exists $c \in (0,+\infty)$ such that for all $\Delta<\Delta_0,\varepsilon<\varepsilon_0, h\geq 1$,
\begin{equation}\notag
\mathbb E\Big[\displaystyle \int_{t}^{t+h} f(|Y_s-\xi_i|)1_{[\xi_i-\varepsilon,\xi_i+\varepsilon]}(Y_s)ds\Big] \leq \begin{cases}
    ch\displaystyle\int_{0}^{\varepsilon}f(x)dx+ch\Delta^{\frac 14 +\frac{\alpha}{2}}\sup\limits_{x\in[0,\varepsilon]}f(x),\quad &\text{if }  [\frac{p_0}{2}] \leq  \frac{3(1 + 2\alpha + 2m)}{2\alpha + 1}, \\
    ch\displaystyle\int_{0}^{\varepsilon}f(x)dx+ch\sup_{x \in [0,\varepsilon]}f(x)\big(\varepsilon^{q+\frac{\alpha}{2}+\frac 14}+\Delta\big) &\text{if } [\frac{p_0}{2}] >  \frac{3(1 + 2\alpha + 2m)}{2\alpha + 1}.
    \end{cases}
\end{equation}
For each $a\in \mathbb R$, let $L^a(Y)=(L^a_t(Y))_{t\in[0,+\infty)}$ be the local time of $Y$ at the point $a$. From \eqref{scheme}, by Tanaka's formula, for any $h\geq 1$, we have
$$|Y_t-a|=|x_0-a|+\displaystyle\int_{0}^t sgn(Y_s-a)b(\overline{Y}_s)ds+\displaystyle \int_0^t sgn(Y_s-a)\sigma_{\Delta}(\overline{Y}_s)dW_s+L_t^a(Y).$$
Hence
\begin{align*}
|L_{t+h}^a(Y)-L_t^a(Y)| \leq & |Y_{t+h}-Y_t|+\left|\displaystyle\int_{t}^{t+h} sgn(Y_s-a)b(\overline{Y}_s)ds\right|+\left|\displaystyle \int_t^{t+h} sgn(Y_s-a)\sigma_{\Delta}(\overline{Y}_s)dW_s\right|\\
\leq & \left|\displaystyle\int_{t}^{t+h} b(\overline{Y}_s)ds\right|+
\left|\displaystyle \int_t^{t+h} \sigma_{\Delta}(\overline{Y}_s)dW_s\right|\\
&+\left|\displaystyle\int_{t}^{t+h} sgn(Y_s-a)b(\overline{Y}_s)ds\right|+\left|\displaystyle \int_t^{t+h} sgn(Y_s-a)\sigma_{\Delta}(\overline{Y}_s)dW_s\right|.
\end{align*}
For the rest of the proof, we denote by  $K_1,K_2,...$ some constants that do not depend on $\Delta,h$ or $a$. Moreover, when $\gamma<0$, these constants do not depend on $t$ either. 
By taking expectations on both sides of the above inequality and using Doob's inequality, we get
\begin{align*}
\mathbb E\Big[|L_{t+h}^a(Y)-L_t^a(Y)|\Big]
\leq& 2\displaystyle\int_{t}^{t+h} \mathbb E[|b(\overline{Y}_s)|]ds+
2\left|\displaystyle \int_t^{t+h}\mathbb E\Big[ \sigma_{\Delta}^2(\overline{Y}_s)\Big]ds\right|^{1/2}\\
\leq& K_1 \displaystyle\int_{t}^{t+h} \bigg( \mathbb E\Big[(1+|\overline{Y}_s)|^l)|\overline{Y}_s|\Big]+1\bigg)ds\\&+
K_1 \left|\displaystyle \int_t^{t+h}\bigg(\mathbb E\Big[ \big(1+|\overline{Y}_s|^m\big)^2|\overline{Y_s}|^{2\alpha+1}\Big]+1\bigg)ds\right|^{1/2},
\end{align*}
where the last estimate is derived from Assumptions (A3), (A4), Remark \ref{rm2.1}, and the fact that $\sigma_{\Delta}^2(x)\leq \sigma^2(x)$.
Thanks to Proposition \ref{dinh ly 2}, 
$$\sup\limits_{s\in[0,t]}\mathbb E\Big[|\overline{Y}_s|^{l+1}\Big]+ \sup\limits_{s \in [0,t)}\mathbb E\Big[|\overline{Y}_s|^{2m+2\alpha+1}\Big]\leq K_2.$$
This implies that 
\begin{equation} \label{eqn:L-L} 
    \mathbb E\Big[|L_{t+h}^a(Y)-L_t^a(Y)|\Big] \leq K_3\sqrt{h}+K_3h \leq 2K_3h.
\end{equation}

On the other hand, by using the occupation time formula, for all $\varepsilon < \varepsilon_0$ and $\Delta<\Delta_0,$ 
$$\mathbb E\Big[\displaystyle \int_{0}^{t} f(|Y_s-\xi_i|)1_{[\xi_i-\varepsilon,\xi_i+\varepsilon]}(Y_s)\sigma_{\Delta}^2(\overline{Y}_s)ds\Big]=\displaystyle\int_{\mathbb R}f(|a-\xi_i|)1_{[\xi_i-\varepsilon,\xi_i+\varepsilon]}(a)\mathbb E\big[L_t^a(Y)\big]da.$$
Hence, it follows from \eqref{eqn:L-L} that 
\begin{align} 
\mathbb E\Big[\displaystyle \int_{t}^{t+h} f(|Y_s-\xi_i|)1_{[\xi_i-\varepsilon,\xi_i+\varepsilon]}(Y_s)\sigma_{\Delta}^2(\overline{Y}_s)ds\Big]=&\displaystyle\int_{\mathbb R}f(|a-\xi_i|)1_{[\xi_i-\varepsilon,\xi_i+\varepsilon]}(a)\Big(\mathbb E\big[L_{t+h}^a(Y)\big]-E\big[L_{t}^a(Y)\big]\Big)da \notag \\
\leq&K_4h\displaystyle\int_{0}^{\varepsilon}f(x)dx. \label{lem3.5:est1}
\end{align}

Next, it follows from Assumption (A4), Lemma \ref{hq2}, Proposition \ref{dinh ly 2}, $2[\frac{p_0}{2}] \geq (4mn+4n\alpha+2n)$ and the Cauchy-Schwarz inequality that 
\begin{align*}
    &\bigg(\mathbb E\Big[\left|\sigma^2(Y_s)-\sigma^2(\overline{Y}_s)\right|^{2n}\Big]\bigg)^{\frac {1}{2n}}\\&\leq L_3^2\bigg(\mathbb E\bigg[\big(1+|Y_s|^m+|\overline{Y}_s|^m\big)^{2n}|Y_s-\overline{Y}_s|^{(\alpha+\frac 12)2n}\big(2\sigma(0)+|Y_s|^{m+\alpha+\frac 12}+|Y_s|^{\frac 12+\alpha}+|\overline{Y}_s|^{m+\alpha+\frac 12}+|\overline{Y}_s|^{\frac 12+\alpha}\big)^{2n}\bigg]\bigg)^{\frac {1}{2n}}\\
&\leq K_5\Delta^{\frac 14 + \frac{\alpha}{2}}.
\end{align*}
Therefore,
\begin{equation}\label{lem3.5:est2}
    \bigg(\mathbb E\Big[|\sigma^2(Y_s)-\sigma^2(\overline{Y}_s)|^\frac{4n}{3}\Big]\bigg)^{\frac {3}{4n}} \leq  \bigg(\mathbb E\Big[|\sigma^2(Y_s)-\sigma^2(\overline{Y}_s)|^{2n}\Big]\bigg)^{\frac {1}{2n}}\leq K_5\Delta^{\frac 14 + \frac{\alpha}{2}}.
\end{equation}
Note that $|\sigma_\Delta^2(x) - \sigma^2(x)| \leq 2|\sigma(x)|^3 \Delta^{\frac 12}$. Thus 
   \begin{align}
   &\bigg( \mathbb E\big[\left|\sigma_{\Delta}^2(\overline{Y}_s)-\sigma^2(\overline{Y}_s)\right|^{\frac{4n}{3}}\big]\bigg)^{\frac {3}{4n}}=\bigg(\mathbb E\Big[\Big(\frac{2\Delta^{1/2}|\sigma^3(\overline{Y}_s)|+\Delta\sigma^4(\overline{Y}_s)}{\big[1+\Delta^{1/2}|\sigma(\overline{Y}_s)|\big]^2}\Big)^{\frac{4n}{3}}\Big]\bigg)^{\frac{3}{4n}}\notag\\ 
    &\leq K_6\Delta^{1/2}\bigg(\mathbb E\Big[|\sigma^{4n}(\overline{Y}_s)|\Big]\bigg)^{\frac{3}{4n}} \leq K_7\Delta^{1/2} \Bigg(\mathbb E\bigg[\sigma^{4n}(0)+L_3^{4n}|\overline{Y}_s|^{4n(\alpha+\frac 12)}\Big(1+|\overline{Y}_s^m|\Big)^{4n}\bigg]\Bigg)^{\frac{3}{4n}}\notag\\
    &\leq K_8\Delta^{1/2}. \label{sigmadelta}
\end{align} 

From \eqref{lem3.5:est1},\eqref{lem3.5:est2} and \eqref{sigmadelta}, for all $\varepsilon \leq \varepsilon_0,$ we have 
\begin{align} 
    &\mathbb E\Big[\displaystyle \int_{t}^{t+h} f(|Y_s-\xi_i|)1_{[\xi_i-\varepsilon,\xi_i+\varepsilon]}(Y_s)ds\Big] \notag \\ \notag
    \leq & \frac{1}{\nu^2}\mathbb E\Big[\displaystyle \int_{t}^{t+h} f(|Y_s-\xi_i|)1_{[\xi_i-\varepsilon,\xi_i+\varepsilon]}(Y_s)\sigma^2(Y_s)ds\Big]\\ \notag
    \leq& \frac{1}{\nu^2}\mathbb E\Big[\displaystyle \int_{t}^{t+h} f(|Y_s-\xi_i|)1_{[\xi_i-\varepsilon,\xi_i+\varepsilon]}(Y_s)\sigma^2_{\Delta}(\overline{Y}_s)ds\Big]\\ \notag
    &+\frac{1}{\nu^2}\mathbb E\Big[\displaystyle \int_{t}^{t+h} f(|Y_s-\xi_i|)1_{[\xi_i-\varepsilon,\xi_i+\varepsilon]}(Y_s)\big[\sigma^2(\overline{Y}_s)-\sigma^2_{\Delta}(\overline{Y}_s)\big]ds\Big]\\ \notag
    &+\frac{1}{\nu^2}\mathbb E\Big[\displaystyle \int_{t}^{t+h} f(|Y_s-\xi_i|)1_{[\xi_i-\varepsilon,\xi_i+\varepsilon]}(Y_s)\big[\sigma^2(Y_s)-\sigma^2(\overline{Y}_s)\big]ds\Big]\\ \notag
    \leq & \frac{K_4h}{\nu^2}\displaystyle\int_{0}^{\varepsilon}f(x)dx+\Big(\frac{K_{8}h^{\frac {3}{4n}}\Delta^{1/2}}{\nu^2}+\frac{K_{5}\Delta^{\frac 14 + \frac{\alpha}{2}}h^{\frac{3}{4n}}}{\nu^2}\Big)\sup\limits_{x \in [0,\varepsilon]}f(x)\Big(\displaystyle\int_{t}^{t+h}\mathbb P(Y_s\in [\xi_i-\varepsilon,\xi_i+\varepsilon])ds\Big)^{\frac {4n-3}{4n}}\\ 
    \leq &K_{9}h\displaystyle\int_{0}^{\varepsilon}f(x)dx+K_{10}h^{\frac{3}{4n}}\Delta^{\frac 14 + \frac{\alpha}{2}}\sup\limits_{x \in [0,\varepsilon]}f(x)\Big(\displaystyle\int_{t}^{t+h}\mathbb P(Y_s\in [\xi_i-\varepsilon,\xi_i+\varepsilon])ds\Big)^{\frac {4n-3}{4n}} \label{estimate1}
\end{align}
where the fourth estimate is obtained by using H\"older inequality .

If $\frac 14+\frac{\alpha}{2}+q<1$, then  $\frac{\alpha}{2}+\frac 14-\frac{3}{4n}<0$. In this case, we get the desired result from \eqref{estimate1} since $\mathbb P(Y_s \in[\xi_i-\varepsilon,\xi_i+\varepsilon])\leq 1$.

It remains to consider the case when $\frac 14+\frac{\alpha}{2}+q\geq 1.$ By choosing $f=1$ in \eqref{estimate1}, we get 
\begin {align*}
&\displaystyle\int_{t}^{t+h}\mathbb P(Y_s\in [\xi_i-\varepsilon,\xi_i+\varepsilon])ds \\
&\leq K_{9}h\varepsilon+K_{10}h^{\frac{3}{4n}}\Delta^{\frac 14 + \frac{\alpha}{2}}\Big(\displaystyle\int_{t}^{t+h}\mathbb P(Y_s\in [\xi_i-\varepsilon,\xi_i+\varepsilon])ds\Big)^{\frac {4n-3}{4n}}\\
&\leq K_{9}h\varepsilon+ K_{10}h\Delta^{\frac 14 + \frac{\alpha}{2}}.
\end{align*}
 By applying the above estimate to  \eqref{estimate1} again,  we get 
\begin{align}
     &\mathbb E\Big[\displaystyle \int_{t}^{t+h} f(|Y_s-\xi_i|)1_{[\xi_i-\varepsilon,\xi_i+\varepsilon]}(Y_s)ds\Big] \notag \\
     \leq &K_{9}h\displaystyle\int_{0}^{\varepsilon}f(x)dx+K_{10}h^{1-q}\sup_{x \in [0,\varepsilon]}f(x)\Delta^{\frac 14 + \frac{\alpha}{2}}\Big(K_{9}h\varepsilon+K_{10}h\Delta^{\frac 14 + \frac{\alpha}{2}}\Big)^q \notag 
\end{align}
By H\"older's inequality and the fact that $\frac 14+\frac{\alpha}{2}+q\geq 1$, we have 
\begin{align*}
\varepsilon^q \Delta^{\frac 14 + \frac{\alpha}{2}} 
&\leq  
\frac{q}{q + \frac{\alpha}{2} + \frac 14} \varepsilon^{q+ \frac{\alpha}{2}+ \frac 14} 
+ \frac{\frac{\alpha}{2} + \frac 14}{q+ \frac{\alpha}{2}+ \frac 14}
\Delta^{q+ \frac{\alpha}{2}+ \frac 14}\\
& \leq \frac{q}{q + \frac{\alpha}{2} + \frac 14} \varepsilon^{q+ \frac{\alpha}{2}+ \frac 14} 
+ \frac{\frac{\alpha}{2} + \frac 14}{q+ \frac{\alpha}{2}+ \frac 14}
\Delta.
\end{align*}
Therefore, 
\begin{align} 
      &\mathbb E\Big[\displaystyle \int_{t}^{t+h} f(|Y_s-\xi_i|)1_{[\xi_i-\varepsilon,\xi_i+\varepsilon]}(Y_s)ds\Big]\leq &K_{9}h\displaystyle\int_{0}^{\varepsilon}f(x)dx+hK_{11}\sup_{x \in [0,\varepsilon]}f(x)\Big(\varepsilon^{q+\frac{\alpha}{2}+\frac 14}+\Delta^{(\frac 14+\frac{\alpha}{2})(1+q)}+\Delta\Big). \label{ind:k=1}
\end{align}
We will prove by induction that for any $k \in \mathbb N$, there exists a constant $c_k$ such that
\begin{equation}\label{estimate2} 
\mathbb E\Big[\displaystyle \int_{t}^{t+h} f(|Y_s-\xi_i|)1_{[\xi_i-\varepsilon,\xi_i+\varepsilon]}(Y_s)ds\Big] \leq K_{9}h\displaystyle\int_{0}^{\varepsilon}f(x)dx+hc_k\sup_{x \in [0,\varepsilon]}f(x)\Big(\varepsilon^{q+\frac{\alpha}{2}+\frac 14}+\Delta^{(\frac 14+\frac{\alpha}{2})(1+q+\ldots+q^k)}+\Delta\Big).
\end{equation}
Indeed, the estimate \eqref{ind:k=1} verifies \eqref{estimate2} for $k=1$.
Assume that \eqref{estimate2} holds for $k=m>1$. Substituting $f=1$ in \eqref{estimate2} for $k=m$, we have
\begin{align*}
    \displaystyle\int_{t}^{t+h}\mathbb P(Y_s\in [\xi_i-\varepsilon,\xi_i+\varepsilon])ds \leq K_{9}h\varepsilon+hc_m\big(\varepsilon+\Delta+\Delta^{(\frac 12+\frac{\alpha}{4})(1+q+\ldots+q^m)}\big).
\end{align*}
Hence, from \eqref{estimate1} and by using Young's inequality, we obtain
\begin{align*}
   & \mathbb E\Big[\displaystyle \int_{t}^{t+h} f(|Y_s-\xi_i|)1_{[\xi_i-\varepsilon,\xi_i+\varepsilon]}(Y_s)ds\Big]\\
&\leq K_{9}h\displaystyle \int_{0}^{
\varepsilon}f(x)dx+K_{10}h^{1-q}\Delta^{\frac 14 + \frac{\alpha}{2}}\sup_{x \in [0,\varepsilon]}f(x)\Big(\displaystyle\int_{t}^{t+h}\mathbb P(Y_s\in [\xi_i-\varepsilon,\xi_i+\varepsilon])ds\Big)^q\\
&\leq K_{9}h\displaystyle \int_{0}^{
\varepsilon}f(x)dx+K_{10}h\Delta^{\frac 14+\frac{\alpha}{2}}\sup_{x \in [0,\varepsilon]}f(x)\Big[(K_{12}+c_m)\varepsilon+c_m\big(\Delta^{(\frac 14+\frac{\alpha}{2})(1+q+\ldots+q^m)}+\Delta\big)\Big]^q\\
&\leq  K_{9}h\displaystyle \int_{0}^{
\varepsilon}f(x)dx+h c_{m+1}\sup_{x \in [0,\varepsilon]}f(x)\big(\varepsilon^{q+\frac{\alpha}{2}+\frac 14}+\Delta^{(\frac 14+\frac{\alpha}{2})(1+q+\ldots+q^{m+1})}+\Delta\big),
\end{align*}
which implies that \eqref{estimate2} is true for $k=m+1$. Therefore, by induction, \eqref{estimate2} holds for any positive integer $k$. The desired result follows from the fact that $1+q+ q^2+\ldots =\frac{4n}{3}>p$.
\end{proof}

\begin{Lem}\label{xac suat chuyen}
For any $\beta>0$, there exist positive constants  $d_1,d_2$ and $d_3$ such that for all $\Delta \leq \Delta_0$,
\begin{itemize}
    \item [i)] $\sup_{t >0} \mathbb P\Big(|Y_t-Y_{\underline{t}}|\geq \beta\varepsilon_1,Y_{\underline{t}} \in (\Xi^{\varepsilon_1})^c\Big)\leq d_1\Delta^{\beta\log(1/\Delta)}$,
    \item[ii)] $\sup_{t >0} \mathbb P\Big(|Y_t-Y_{\underline{t}}|\geq \beta d(Y_{\underline{t}},\Xi),Y_{\underline{t}} \in \Xi^{\varepsilon_1}\setminus \Xi^{\varepsilon_2}\Big)\leq d_2\Delta^{\beta\log(1/\Delta)},$
    \item[iii)] $ \sup_{t >0} \mathbb P\Big(|Y_t-Y_{\underline{t}}|\geq \beta \varepsilon_2,Y_{\underline{t}} \in \Xi^{\varepsilon_2}\Big)\leq d_3\Delta^{\beta\log(1/\Delta)}.$
\end{itemize}
\end{Lem}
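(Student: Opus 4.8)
The plan is to use the explicit one–step form of the scheme \eqref{scheme} together with the strong Markov property of $W$, which reduces all three inequalities to a single Gaussian tail estimate. Fix $t>0$. Since $t$ is deterministic, for every $j$ the event $\{\underline t=t_j\}=\{t_j\le t<t_{j+1}\}$ is $\mathcal F_{t_j}$-measurable (recall $t_{j+1}=t_j+h_\Delta(Y_{t_j})$ and $Y_{t_j}$ are $\mathcal F_{t_j}$-measurable), and on that event $Y_t-Y_{\underline t}=b(Y_{t_j})(t-t_j)+\sigma_\Delta(Y_{t_j})(W_t-W_{t_j})$ with $0\le t-t_j<h_\Delta(Y_{t_j})$. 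By the strong Markov property, conditionally on $\mathcal F_{t_j}$ the increment $W_t-W_{t_j}$ is centred Gaussian with variance $t-t_j$, so $|W_t-W_{t_j}|$ is stochastically dominated by $\sqrt{h_\Delta(Y_{t_j})}\,|Z|$ with $Z\sim N(0,1)$. Summing over $j$ and using $\sum_j\mathbf 1_{\{\underline t=t_j\}}=1$, each of the three probabilities is bounded by
\[
\sup_{y\in A}\mathbb P\Big(|b(y)|\,h_\Delta(y)+\sigma_\Delta(y)\sqrt{h_\Delta(y)}\,|Z|\ge\theta\Big),
\]
where $(A,\theta)$ is $((\Xi^{\varepsilon_1})^c,\beta\varepsilon_1)$ in (i), $(\Xi^{\varepsilon_1}\setminus\Xi^{\varepsilon_2},\beta\,d(y,\Xi))$ in (ii), and $(\Xi^{\varepsilon_2},\beta\varepsilon_2)$ in (iii).

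Next I would insert the elementary consequences of the definition \eqref{chooseh} of $h_\Delta$ (the same bounds already used in the proof of Lemma \ref{hq2}): on $(\Xi^{\varepsilon_1})^c$, $|b(y)|h_\Delta(y)\le\Delta/4$ and $\sigma_\Delta(y)\sqrt{h_\Delta(y)}\le|\sigma(y)|\sqrt{h_\Delta(y)}\le\Delta^{1/2}$; on $\Xi^{\varepsilon_1}\setminus\Xi^{\varepsilon_2}$, $|b(y)|h_\Delta(y)\le d(y,\Xi)^2/(4\log^4(1/\Delta))$ and $\sigma_\Delta(y)\sqrt{h_\Delta(y)}\le d(y,\Xi)/\log^2(1/\Delta)$; on $\Xi^{\varepsilon_2}$, $|b(y)|h_\Delta(y)\le\Delta^2\log^4(1/\Delta)/4$ and $\sigma_\Delta(y)\sqrt{h_\Delta(y)}\le\Delta\log^2(1/\Delta)$. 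In each case the deterministic drift term is at most $\theta/2$ once $\Delta$ is small (in (i) this amounts to $\sqrt\Delta\le2\beta\log^2(1/\Delta)$; in (ii) it follows from $d(y,\Xi)\le\varepsilon_1<1$ together with $\log^4(1/\Delta)\ge 1/(2\beta)$; in (iii) from $\Delta\le2\beta$), and — the key observation that makes the three bounds identical — the ratio of the remaining threshold $\theta/2$ to the Gaussian scale $\sigma_\Delta(y)\sqrt{h_\Delta(y)}$ equals exactly $\tfrac\beta2\log^2(1/\Delta)$ in all three regimes (in (ii) the common factor $d(y,\Xi)$ cancels). Hence the supremum above is bounded by $\mathbb P\big(|Z|\ge\tfrac\beta2\log^2(1/\Delta)\big)$.

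Finally, the Gaussian tail bound $\mathbb P(|Z|\ge a)\le2e^{-a^2/2}$ with $a=\tfrac\beta2\log^2(1/\Delta)$ gives $2\exp(-\tfrac{\beta^2}{8}\log^4(1/\Delta))$; since $\beta v-\tfrac{\beta^2}{8}v^2\le2$ for every $v>0$ (the parabola is maximised at $v=4/\beta$), this is $\le2e^2\exp(-\beta\log^2(1/\Delta))=2e^2\,\Delta^{\beta\log(1/\Delta)}$, which is the claimed estimate. For the residual range of $\Delta$ near $\Delta_0$ where the "drift $\le\theta/2$" step fails, $\Delta$ is bounded away from $0$, so $\Delta^{\beta\log(1/\Delta)}$ is bounded below by a positive constant and the inequality holds after enlarging $d_1,d_2,d_3$; when $\gamma<0$ nothing changes since no moment estimate is invoked here. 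The only genuinely delicate point is the conditioning step: one must be careful that $\underline t$ is a random stopping time while $t$ is deterministic, so that $Y_{\underline t}$ and $h_\Delta(Y_{\underline t})$ can be frozen inside the conditional expectation and the scale of the Gaussian increment is $\le\sqrt{h_\Delta(Y_{\underline t})}$ pointwise; everything afterwards is a routine combination of the choice of $h_\Delta$ with the Gaussian tail bound.
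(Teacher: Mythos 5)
Your proof is correct, and its backbone is the same as the paper's (which defers to Lemma~7 of Yaroslavtseva \cite{Larisa}): decompose on the events $\{\underline t=t_j\}$, invoke the strong Markov property at the stopping time $\underline t$, observe from the definition \eqref{chooseh} of $h_\Delta$ that in all three regimes the (halved) threshold exceeds $\sigma_\Delta(y)\sqrt{h_\Delta(y)}\cdot\tfrac{\beta}{2}\log^2(1/\Delta)$, and convert the resulting exponential decay into $\Delta^{\beta\log(1/\Delta)}$ via $e^{-\beta\log^2(1/\Delta)}=\Delta^{\beta\log(1/\Delta)}$. The two technical points where you diverge are worth noting. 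First, the paper controls the randomness of $t-\underline t$ by bounding $|Y_t-Y_{\underline t}|$ by the running maximum $\sup_{s\le h_\Delta(Y_{\underline t})}|\phi(Y_{\underline t},W^{\underline t})(s)-Y_{\underline t}|$ and then applying a running-max tail bound $\mathbb P(\sup_{s\le u}|W_s|\ge x)\le d\,e^{-x/\sqrt u}$, whose linear exponent gives $\Delta^{\beta\log(1/\Delta)}$ directly after subtracting the drift; you instead condition on $\mathcal F_{t_j}$, correctly observe that $t-t_j<h_\Delta(Y_{t_j})$ is $\mathcal F_{t_j}$-measurable so the Gaussian increment has variance $\le h_\Delta(Y_{t_j})$, and use the plain tail $\mathbb P(|Z|\ge a)\le 2e^{-a^2/2}$. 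Second, because the plain Gaussian tail yields the (stronger) exponent $-\tfrac{\beta^2}{8}\log^4(1/\Delta)$ rather than $-\beta\log^2(1/\Delta)$, you need the extra parabola inequality $\beta v-\tfrac{\beta^2}{8}v^2\le 2$ to recover the claimed form; this is a clean elementary substitute for the paper's one-power tail bound. Your treatment of the drift by splitting $\theta$ into two halves and absorbing the residual $\Delta$-range near $\Delta_0$ into the constants is likewise a valid alternative to the paper's subtraction. Both routes are sound and of comparable length; the paper's costs one Doob/reflection estimate, yours costs one extra algebraic step.
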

The proof of this Lemma is similar to the one of Lemma 7 in \cite{Larisa}, so it shall not be provided here.


		\begin{Lem}\label{tich phan xs chuyen}
		For each $t>0$, there exists a positive constant $C$ which does not depend on $\Delta$ such that for all $h\geq 1,\Delta \leq \Delta_0,$
		$$\mathbb E\bigg[\displaystyle\int_{t}^{t+h}1_{ \mS}(Y_s,\overline{Y}_s)ds\bigg]\leq Ch\Delta^{\frac 14 + \frac{\alpha}{2}}.$$
		If $\gamma < 0$, then the constant $C$ does not depend on $t$ either. 
		\end{Lem}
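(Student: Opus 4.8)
The plan is to prove the bound for the set $\bmS:=\mathbb R^2\setminus\mathcal S$, i.e. to show that the expected amount of time on $[t,t+h]$ during which $Y_s$ and $\overline{Y}_s$ lie in distinct continuity intervals of $b$ is $O(h\Delta^{\frac14+\frac\alpha2})$. I would decompose the event $\{(Y_s,\overline{Y}_s)\in\bmS\}$ according to which of the three regions $(\Xi^{\varepsilon_1})^c$, $\Xi^{\varepsilon_1}\setminus\Xi^{\varepsilon_2}$, $\Xi^{\varepsilon_2}$ contains $\overline{Y}_s=Y_{\underline{s}}$ (these are exactly the three branches in the definition of $h_\Delta$), estimating the first two by the one-step displacement bounds of Lemma~\ref{xac suat chuyen} and the third by the occupation-time bound of Lemma~\ref{localtime}.

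The elementary fact behind the first two regions is: if $\overline{Y}_s$ belongs to an open continuity interval $I$ of $b$ and $(Y_s,\overline{Y}_s)\in\bmS$, then $Y_s\notin I$, hence $|Y_s-\overline{Y}_s|\ge d(\overline{Y}_s,\Xi)$. On $(\Xi^{\varepsilon_1})^c$ one has $\overline{Y}_s\notin\Xi$ and $d(\overline{Y}_s,\Xi)>\varepsilon_1$, while on $\Xi^{\varepsilon_1}\setminus\Xi^{\varepsilon_2}$ one has $\overline{Y}_s\notin\Xi$ and $\varepsilon_2<d(\overline{Y}_s,\Xi)\le\varepsilon_1$; therefore
\begin{align*}
1_{\bmS}(Y_s,\overline{Y}_s)\,1_{(\Xi^{\varepsilon_1})^c}(\overline{Y}_s)&\le 1_{\{|Y_s-\overline{Y}_s|\ge\varepsilon_1,\ \overline{Y}_s\in(\Xi^{\varepsilon_1})^c\}},\\
1_{\bmS}(Y_s,\overline{Y}_s)\,1_{\Xi^{\varepsilon_1}\setminus\Xi^{\varepsilon_2}}(\overline{Y}_s)&\le 1_{\{|Y_s-\overline{Y}_s|\ge d(\overline{Y}_s,\Xi),\ \overline{Y}_s\in\Xi^{\varepsilon_1}\setminus\Xi^{\varepsilon_2}\}}.
\end{align*}
Taking expectations, integrating over $[t,t+h]$ and applying Lemma~\ref{xac suat chuyen}(i)--(ii) with $\beta=1$, the contribution of these two regions is at most $(d_1+d_2)\,h\,\Delta^{\log(1/\Delta)}$.

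On the third region — which also absorbs the exceptional set $\{\overline{Y}_s\in\Xi\}$, since $\Xi\subset\Xi^{\varepsilon_2}$ — the straddling probability need not be small, so I would simply bound $1_{\bmS}(Y_s,\overline{Y}_s)\,1_{\Xi^{\varepsilon_2}}(\overline{Y}_s)\le 1_{\Xi^{\varepsilon_2}}(\overline{Y}_s)$ and then pass from $\overline{Y}_s$ to $Y_s$: because $\overline{Y}_s\in\Xi^{\varepsilon_2}$ together with $Y_s\notin\Xi^{2\varepsilon_2}$ forces $|Y_s-\overline{Y}_s|>\varepsilon_2$, one has $1_{\Xi^{\varepsilon_2}}(\overline{Y}_s)\le 1_{\Xi^{2\varepsilon_2}}(Y_s)+1_{\{|Y_s-\overline{Y}_s|\ge\varepsilon_2,\ \overline{Y}_s\in\Xi^{\varepsilon_2}\}}$. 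Lemma~\ref{xac suat chuyen}(iii) with $\beta=1$ bounds the expected integral of the last term by $d_3\,h\,\Delta^{\log(1/\Delta)}$, and for $\mathbb E\big[\int_t^{t+h}1_{\Xi^{2\varepsilon_2}}(Y_s)\,ds\big]$ I would invoke Lemma~\ref{localtime} with $f\equiv1$ and $\varepsilon=2\varepsilon_2$ (legitimate: $2\varepsilon_2<\varepsilon_0$ by the choice of $\Delta_0$, and $h\ge1$), which in either regime of that lemma yields a bound of the form $c\,h\big(\varepsilon_2+\Delta^{\frac14+\frac\alpha2}+\varepsilon_2^{\,q+\frac\alpha2+\frac14}+\Delta\big)$. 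Using $\alpha\le\frac12$ — so $\tfrac34-\tfrac\alpha2>0$, and in the second regime of Lemma~\ref{localtime} also $q+\tfrac\alpha2+\tfrac14>1$ — one checks that each of $\varepsilon_2=\Delta\log^4(1/\Delta)$, $\Delta^{\log(1/\Delta)}$, $\varepsilon_2^{\,q+\alpha/2+1/4}$ and $\Delta$ is $O(\Delta^{\frac14+\frac\alpha2})$ uniformly over $\Delta\in(0,\Delta_0]$, since in each case the ratio with $\Delta^{\frac14+\frac\alpha2}$ tends to $0$ as $\Delta\to0^+$ and is continuous on $(0,\Delta_0]$. Summing the three regional estimates gives $\mathbb E\big[\int_t^{t+h}1_{\bmS}(Y_s,\overline{Y}_s)\,ds\big]\le C\,h\,\Delta^{\frac14+\frac\alpha2}$. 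Since $d_1,d_2,d_3$ are uniform in $t$ and, when $\gamma<0$, the constant $c$ of Lemma~\ref{localtime} is $t$-independent, the same holds for $C$ in that case.

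I expect the only delicate point to be the innermost region $\Xi^{\varepsilon_2}$: there $Y_s$ and $\overline{Y}_s$ can really sit on opposite sides of a discontinuity, so the straddling structure must be abandoned and one has to rely instead on the step size having been shrunk to order $\Delta^2\log^4(1/\Delta)$ near $\Xi$ — which is precisely what makes the occupation time of $\Xi^{2\varepsilon_2}$ of order $h\Delta^{\frac14+\frac\alpha2}$ via Lemma~\ref{localtime}. The passage from $\overline{Y}_s$ to $Y_s$ inside that indicator is where Lemma~\ref{xac suat chuyen}(iii) enters, and the final bookkeeping that $\varepsilon_2$ and the auxiliary powers of $\Delta$ are all $O(\Delta^{\frac14+\frac\alpha2})$ is where $\alpha\le\frac12$ is used.
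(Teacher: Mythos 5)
Your proposal is correct and follows essentially the same route as the paper: decompose according to which of the three branches of $h_\Delta$ contains $\overline{Y}_s$, bound the first two regions by the one-step exit probabilities of Lemma~\ref{xac suat chuyen}(i)--(ii), and bound the third by $1_{\Xi^{\varepsilon_2}}(\overline{Y}_s)\le 1_{\Xi^{2\varepsilon_2}}(Y_s)+1_{\{|Y_s-\overline{Y}_s|\ge\varepsilon_2,\,\overline{Y}_s\in\Xi^{\varepsilon_2}\}}$, applying Lemma~\ref{xac suat chuyen}(iii) and Lemma~\ref{localtime} with $f\equiv 1$. Your final bookkeeping that $\varepsilon_2$, $\Delta^{\log(1/\Delta)}$, $\varepsilon_2^{q+\alpha/2+1/4}$ and $\Delta$ are all $O(\Delta^{1/4+\alpha/2})$ is also consistent with what the paper does (and your explicit check that $2\varepsilon_2<\varepsilon_0$ is a small point of care the paper glosses over).
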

		
		\begin{proof}
		We write 
	\begin{align*}
		&\mathbb E\bigg[\displaystyle\int_{t}^{t+h}1_{ \mS}(Y_s,\overline{Y}_s)ds\bigg]\\
		&=\mathbb E\bigg[\displaystyle\int_{t}^{t+h}1_{ \mS}(Y_s,\overline{Y}_s)1_{(\Xi^{\varepsilon_1})^c}(Y_{\underline{s}})ds\bigg]+\mathbb E\bigg[\displaystyle\int_{t}^{t+h}1_{ \mS}(Y_s,\overline{Y}_s)1_{\Xi^{\varepsilon_1}\setminus \Xi^{\varepsilon_2}}(Y_{\underline{s}})ds\bigg]+\mathbb E\bigg[\displaystyle\int_{t}^{t+h}1_{ \mS}(Y_s,\overline{Y}_s)1_{\Xi^{\varepsilon_2}}(Y_{\underline{s}})ds\bigg]\\
		&  \leq \displaystyle\int_{t}^{t+h}\mathbb P\Big(|Y_s-Y_{\underline{s}}|\geq \varepsilon_1,Y_{\underline{s}}\in (\Xi^{\varepsilon_1})^c\Big)ds
		+ \displaystyle\int_{t}^{t+h}\mathbb P\Big(|Y_s-Y_{\underline{s}}|\geq d(Y_{\underline{s}},\Xi),Y_{\underline{s}}\in \Xi^{\varepsilon_1}\setminus\Xi^{\varepsilon_2}\Big)ds\\
		&\qquad + \displaystyle\int_{t}^{t+h} \mathbb P\big(Y_{\underline{s}}\in \Xi^{\varepsilon_2}\big)ds.
	\end{align*}	
	By Lemma \ref{xac suat chuyen}, the first and second terms of the last expression are bounded by $d_1 h \Delta$ and $d_2 h \Delta$, respectively.  For the last term, it can be bounded by $H_1 + H_2$, where 
	\begin{align*}
	    H_1=  \displaystyle\int_{t}^{t+h} \mathbb P\big(|Y_s-Y_{\underline{s}}|\geq \varepsilon_2,Y_{\underline{s}}\in \Xi^{\varepsilon_2}\big)ds, \quad H_2=  \displaystyle\int_{t}^{t+h} \mathbb P\big(Y_s \in \Xi^{2\varepsilon_2}\big)ds.
	\end{align*}
	Note that 
	$H_1\leq d_3\sqrt{h}\Delta$. 
	By using Lemma \ref{localtime} with $f=1,\varepsilon=\varepsilon_2$, we have
	$H_2\leq 4c\sqrt{h}\varepsilon_2+ch\Delta^{\frac 14 + \frac{\alpha}{2}}$, where $c$ is a constant not depending on $\Delta$. Moreover, if $\gamma <0$, then $c$ does not depend on $t$ either. This concludes the proof.
	\end{proof}
	\begin{Lem}\label{tichphanxacsuatchuyen2}
For each $t >0$ and $L_0 \in \mathbb{R}$, there exists a positive constant $K$ which does not depend on $\Delta$ such that for all $\Delta<\Delta_0$, it holds that 
	\begin{equation}\label{tag30}
	\mathbb E\bigg[\displaystyle\int_{0}^{t}e^{-L_0s} 1_{ \mS}(Y_s,\overline{Y}_s)ds\bigg]\leq K e^{-L_0t}\Delta^{\frac 14 + \frac{\alpha}{2}}.
	\end{equation}
When both $\gamma$ and $L_0$ are negative, the constant $K$ does not depend on $t$ either. 
	\end{Lem}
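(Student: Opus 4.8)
The plan is to obtain \eqref{tag30} from its un-discounted version, Lemma~\ref{tich phan xs chuyen}, by chopping $[0,t]$ into unit-length subintervals on each of which the weight $e^{-L_0 s}$ is comparable to a constant, and then summing the resulting geometric series. The only piece of $t$-dependent information that enters is the constant in Lemma~\ref{tich phan xs chuyen}, which is uniform in the starting time exactly when $\gamma<0$; this is what will make the final constant independent of $t$ when $\gamma$ and $L_0$ are both negative.

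In detail, I would fix $t>0$, set $N=\lceil t\rceil$, and use $1_{\mS}\ge 0$ together with $[0,t]\subset\bigcup_{j=0}^{N-1}[j,j+1]$ to get
\[
\mathbb E\Big[\int_0^t e^{-L_0 s}1_{\mS}(Y_s,\overline Y_s)\,ds\Big]\le\sum_{j=0}^{N-1}\mathbb E\Big[\int_j^{j+1}e^{-L_0 s}1_{\mS}(Y_s,\overline Y_s)\,ds\Big].
\]
On $[j,j+1]$ the monotone function $s\mapsto e^{-L_0 s}$ satisfies $e^{-L_0 s}\le\max\{e^{-L_0 j},e^{-L_0(j+1)}\}\le e^{|L_0|}e^{-L_0 j}$, so the $j$-th summand is at most $e^{|L_0|}e^{-L_0 j}\,\mathbb E\big[\int_j^{j+1}1_{\mS}(Y_s,\overline Y_s)\,ds\big]$. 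Applying Lemma~\ref{tich phan xs chuyen} with starting time $j$ and $h=1$ (its proof relies only on the moment bounds of Proposition~\ref{dinh ly 2}, which are finite at every $s\ge 0$, so the case $j=0$ is covered) gives a $\Delta$-free constant $C_j$ with $\mathbb E\big[\int_j^{j+1}1_{\mS}(Y_s,\overline Y_s)\,ds\big]\le C_j\Delta^{\frac14+\frac{\alpha}{2}}$, and $C_j$ can be taken equal to a fixed $C$ for all $j$ when $\gamma<0$. Summing,
\[
\mathbb E\Big[\int_0^t e^{-L_0 s}1_{\mS}(Y_s,\overline Y_s)\,ds\Big]\le e^{|L_0|}\Delta^{\frac14+\frac{\alpha}{2}}\sum_{j=0}^{N-1}e^{-L_0 j}C_j.
\]
If $\gamma<0$ and $L_0<0$ this sum is $C\sum_{j=0}^{N-1}e^{|L_0|j}=C\frac{e^{|L_0|N}-1}{e^{|L_0|}-1}\le\frac{C}{e^{|L_0|}-1}e^{|L_0|(t+1)}$, and since $e^{|L_0|t}=e^{-L_0 t}$ we obtain \eqref{tag30} with $K=\frac{Ce^{2|L_0|}}{e^{|L_0|}-1}$, independent of $t$. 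In all other cases ($L_0\ge 0$, or $\gamma\ge 0$) the sum $S(t):=\sum_{j=0}^{N-1}e^{-L_0 j}C_j$ is a finite number depending on $t$ but not on $\Delta$ — bounded by $C/(1-e^{-L_0})$ if $\gamma<0,\ L_0>0$, by $C\lceil t\rceil$ if $\gamma<0,\ L_0=0$, and a finite sum of $\Delta$-free terms if $\gamma\ge 0$ — so rewriting $S(t)=\big(S(t)e^{L_0 t}\big)e^{-L_0 t}$ puts the bound in the form \eqref{tag30} with $K=K(t)=e^{|L_0|}S(t)e^{L_0 t}<\infty$.

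I do not expect a real obstacle here: the lemma is essentially a discounted restatement of Lemma~\ref{tich phan xs chuyen}. The two points that deserve care are, first, matching the geometric partial sum $\sum_{j<N}e^{|L_0|j}\sim e^{|L_0|t}$ against the target factor $e^{-L_0 t}$ so that the leftover constant is genuinely $t$-free when $\gamma,L_0<0$; and second, verifying that Lemma~\ref{tich phan xs chuyen} may be used on the first subinterval $[0,1]$, i.e.\ from the starting time $0$ — harmless since the moment estimates its proof relies on remain finite at $s=0$ (one can also simply take the first piece to be $[\delta,1+\delta]$ for arbitrarily small $\delta>0$ and let $\delta\downarrow 0$).
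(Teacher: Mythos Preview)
Your proposal is correct and follows essentially the same approach as the paper: partition $[0,t]$ into unit subintervals, bound $e^{-L_0 s}$ by $e^{|L_0|}e^{-L_0 j}$ on each, apply Lemma~\ref{tich phan xs chuyen} with $h=1$, and sum the resulting geometric series. If anything, your treatment is slightly more careful than the paper's, since you track the possible $j$-dependence of the constant $C_j$ from Lemma~\ref{tich phan xs chuyen} when $\gamma\ge 0$, whereas the paper writes a single $C$ throughout.
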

	\begin{proof}
	It is sufficient to prove \eqref{tag30}  for $t\in \mathbb N$. For $C$ being the constant in the statement of Lemma \ref{tich phan xs chuyen}, we have 
	\begin{align*}
	    &\mathbb E\bigg[\displaystyle\int_{0}^{t}e^{-L_0s}1_{ \mS}(Y_s,\overline{Y}_s)ds\bigg] =\sum\limits_{i=0}^{t-1} \mathbb E\bigg[\displaystyle\int_{i}^{i+1}e^{-L_0s}1_{ \mS}(Y_s,\overline{Y}_s)ds\bigg]\\
	     & \leq e^{|L_0|} \sum\limits_{i=0}^{t-1} e^{-L_0(i+1)} \mathbb E\bigg[\displaystyle\int_{i}^{i+1} 1_{ \mS}(Y_s,\overline{Y}_s)ds\bigg]  \leq C e^{|L_0|} \sum\limits_{i=0}^{t-1} e^{-L_0(i+1)}\Delta^{\frac 14 + \frac{\alpha}{2}} = & C e^{|L_0|} \frac{e^{-L_0(t+1)}-e^{-L_1}}{e^{-L_0}-1}\Delta^{\frac 14 + \frac{\alpha}{2}},
	    	\end{align*}
which implies the desired result. 
	\end{proof}

\subsection{Proof of Theorem \ref{convergence}}

Put $Z_t:=X_t-Y_t$. Applying Property (YW3) and It\^o's formula to $e^{-L_1t}\phi_{\delta \varepsilon}(Y_t)$ gives
		\begin{align}
		&e^{-L_1t}|Z_t| \le e^{-L_1t}\varepsilon + e^{-L_1t}\phi_{\delta \varepsilon}(Z_t)  \notag \\
		=&e^{-L_1t}\varepsilon +\int_0^t  e^{-L_1s}\left[-L_1\phi_{\delta\varepsilon}(Z_s)+\phi'_{\delta\varepsilon}(Z_s)\left( b(X_s)-b(\overline{Y}_s)\right)  +\dfrac{1}{2}\phi''_{\delta\varepsilon}(Z_s)\left| \sm(X_s)-\sm_{\Delta}(\overline{Y}_s) \right|^2 \right]ds  \notag\\ 
		&+ \int_0^t e^{-L_1s}\phi'_{\delta\varepsilon}(Z_s) \left( \sm(X_s)-\sm_{\Delta}(\overline{Y}_s)\right)dW_s.
		 \label{tag25} 
		\end{align}
		Set 
		$J_1(s) = \phi'_{\delta\varepsilon}(Z_s)\left( b(X_s)-b(\overline{Y}_s)\right)$ 
		and 
		$J_2(s)= \dfrac{1}{2}\phi''_{\delta\varepsilon}(Z_s)\left| \sm(X_s)-\sm_{\Delta}(\overline{Y}_s) \right|^2$. 
		Firstly, we write 
		\begin{align}
		J_2(s) 
		&= \dfrac{1}{2}\phi''_{\delta\varepsilon}(Z_s)\left| \sm(X_s)-\sm(Y_s) + \sm(Y_s) - \sm(\overline{Y}_s) + \sm(\overline{Y}_s) -  \sm_{\Delta}(\overline{Y}_s) \right|^2. \notag 
		\end{align}
Using Property (YW5) and Assumption (A4), we have
		\begin{align}
		J_2(s) &\le \frac{3}{|Z_s|\log \delta}1_{[\frac{\varepsilon}{\delta};\varepsilon]}(|Z_s|) \left( \left|\sm(X_s)-\sm(Y_s) \right|^2+\left| \sm(Y_s) - \sm(\overline{Y}_s) \right|^2+\left|\sm(\overline{Y}_s) -  \sm_{\Delta}(\overline{Y}_s) \right|^2 \right) \notag \\
		&\le \frac{3}{|Z_s|\log \delta}1_{[\frac{\varepsilon}{\delta};\varepsilon]}(|Z_s|) \left( L^2_3 \left(1+|{X}_s|^m+|Y_s|^m\right)^2|X_s-Y_s|^{1+2\alpha} + \right. \notag \\ 
		&\quad \left. +L^2_3 \left(1+|Y_s|^m+|\overline{Y}_s|^m\right)^2|Y_s-\overline{Y}_s|^{1+2\alpha} + \Delta\left|\sm(\overline{Y}_s)\right|^4 \right) \notag \\
		&\le \frac{3}{|Z_s|\log \delta}1_{[\frac{\varepsilon}{\delta};\varepsilon]}(|Z_s|) \left( 3L^2_3 \left(1+|{X}_s|^{2m}+|Y_s|^{2m}\right)|X_s-Y_s|^{1+2\alpha} + \right. \notag \\ 
		&\quad \left. + 3L^2_3 \left(1+|Y_s|^{2m}+|\overline{Y}_s|^{2m}\right)|Y_s-\overline{Y}_s|^{1+2\alpha} + \Delta\left|\sm(\overline{Y}_s)\right|^4  \right) \notag \\
		&\le \dfrac{9 L^2_3 \varepsilon^{2\alpha}}{\log \delta}\left(1+|{X}_s|^{2m}+|Y_s|^{2m}\right) +\dfrac{9 L^2_3 \delta}{\varepsilon \log \delta}\left(1+|Y_s|^{2m}+|\overline{Y}_s|^{2m}\right)|Y_s-\overline{Y}_s|^{2\alpha+1} \notag \\
		&\quad +\dfrac{3\delta \Delta |\sm(\overline{Y}_s)|^4}{\varepsilon \log \delta}. \notag
		\end{align} 
		 Using the fact  that $\left(1+|Y_s|^{2m}+|\overline{Y}_s|^{2m}\right)|Y_s-\overline{Y}_s|^{2\alpha+1} \leq \left(1+|Y_s|^{2m}+|\overline{Y}_s|^{2m}\right)^2 \Delta^{\frac 12 + \alpha} + |Y_s-\overline{Y}_s|^{4\alpha+2}\Delta^{-\frac 12 -\alpha}$, we have 
		\begin{align} 
	    J_2(s)	&\le \dfrac{9 L^2_3 \varepsilon^{2\alpha}}{\log \delta}\left(1+|{X}_s|^{2m}+|Y_s|^{2m}\right) +\dfrac{9L^2_3 \delta}{2\varepsilon \log \delta}\left(1+|Y_s|^{2m}+|\overline{Y}_s|^{2m}\right)^2\Delta^{1/2+\alpha} \notag \\
		&\quad +\dfrac{9L^2_3 \delta}{2\varepsilon \log \delta}\Delta^{-\frac 12 -\alpha}|Y_s-\overline{Y}_s|^{4\alpha+2}  +\dfrac{C_1 \delta \Delta \left(|\overline{Y}_s|^{2+4\alpha+4m}+1\right)}{\varepsilon \log \delta} \notag \\
		&\le \dfrac{9 L^2_3 \varepsilon^{2\alpha}}{\log \delta}\left(1+|{X}_s|^{2m}+|Y_s|^{2m}\right) +\dfrac{27L^2_3 \delta}{2\varepsilon \log \delta}\left(1+|Y_s|^{4m}+|\overline{Y}_s|^{4m}\right)\Delta^{1/2+\alpha} \notag \\
		&\quad +\dfrac{9L^2_3 \delta}{2\varepsilon \log \delta}\Delta^{-\frac 12-\alpha}|Y_s-\overline{Y}_s|^{2+4\alpha}  +\dfrac{C_1  \delta \Delta \left( |\overline{Y}_s|^{2+4\alpha+4m}+1\right)}{\varepsilon \log \delta}, \label{tag15}
		\end{align}
		for some constant $C_1>0$.
		Secondly, we write 
		\begin{align}\notag
		J_1(s) 
		&=   \phi'_{\delta\varepsilon}(Z_s)\left( b(X_s)-b(Y_s)\right) + \phi'_{\delta\varepsilon}(Z_s)\left( b(Y_s)-b(\overline{Y}_s)\right).
		\end{align}
	Thanks to Properties (YW1), (YW2) and Assumptions (A2), (A3), we have
		\begin{align*}
		J_1(s) \leq  & \dfrac{\phi'_{\delta \varepsilon}(|Z_s|)}{|Z_s|} Z_s \left( b(X_s)-b(Y_s)\right)  + \left| \phi'_{\delta\varepsilon}(Z_s)\left( b(Y_s)-b(\overline{Y}_s)\right) \right|  \\ \notag
		\le &L_1 \phi'_{\delta \varepsilon}(|Z_s|) |Z_s|+|b(Y_s)-b(\overline{Y}_s)|1_{ \mS}(Y_s,\overline{Y}_s)+|b(Y_s)-b(\overline{Y}_s)|1_{\mS^c}(Y_s,\overline{Y}_s)\\ \notag
		\leq & L_1 \phi'_{\delta \varepsilon}(|Z_s|) |Z_s|+|b(Y_s)-b(\overline{Y}_s)|1_{ \mS}(Y_s,\overline{Y}_s)+L_2(1+|Y_s|^l+|\overline{Y}_s|^l)|Y_s-\overline{Y}_s|. \notag
		\end{align*}
		It follows from Remark \ref{rm2.1} that
		\begin{align}
		J_1(s) &\leq L_1 \phi'_{\delta \varepsilon}(|Z_s|) |Z_s|+C_2 1_{ \mS}(Y_s,\overline{Y}_s)+C_2(1+|Y_s|^l+|\overline{Y}_s|^l)|Y_s-\overline{Y}_s|\\ \notag
		&\leq  L_1 \phi'_{\delta \varepsilon}(|Z_s|) |Z_s|+C_2 1_{ \mS}(Y_s,\overline{Y}_s) +\dfrac{3}{2}C_2 \Delta^{1/2} \left(1+|Y_s|^{2l}+|\overline{Y}_s|^{2l}\right) + \dfrac{1}{2} C_2 \Delta^{-1/2} |Y_s-\overline{Y}_s|^2,
	\label{tag14} 
		\end{align}
		where the constant $C_2>0$ depends on $L_2,n,\xi_i,b(\xi_i+),b(\xi_i-)$.

From \eqref{tag25},\eqref{tag15}, and the property $-L_1\phi_{\delta\varepsilon}(x)+L_1\phi'_{\delta\varepsilon}(|x|)|x| \le \max\{L_1 \varepsilon;0\}$,
\begin{equation*}
\begin{aligned}
		&\bE \left[e^{-L_1t}|Z_t| \right]  \\
		\le \ &e^{-L_1t}\varepsilon + \int_0^t e^{-L_1s}\left[\max\{L_1 \varepsilon;0\}+\dfrac{3}{2}C_2 \Delta^{1/2} \left(1+\bE \left[|Y_s|^{2l}\right]+\bE \left[|\overline{Y}_s|^{2l}\right]\right) \right. \\
		+ &\dfrac{1}{2} C_2 \Delta^{-1/2} \bE \left[|Y_s-\overline{Y}_s|^2\right] +C_2 1_{S}(Y_s,\overline{Y}_s)+ \dfrac{9 L^2_3 \varepsilon^{2\alpha}}{\log \delta}\left(1+\bE \left[|{X}_s|^{2m}\right]+\bE \left[|Y_s|^{2m}\right]\right)   \\
		+&\dfrac{27L^2_3 \delta}{2\varepsilon \log \delta}\left(1+\bE \left[|Y_s|^{4m}\right]+\bE \left[|\overline{Y}_s|^{4m}\right]\right)\Delta^{1/2+\alpha} +\dfrac{9L^2_3 \delta}{2\varepsilon \log \delta}\Delta^{-1/2-\alpha}\bE \left[|Y_s-\overline{Y}_s|^{2+4\alpha}\right]  \\
		+&\left. \dfrac{C_1  \delta \Delta \left(\bE \left[|\overline{Y}_s|^{2+4\alpha+4m}\right]+1\right)}{\varepsilon \log \delta} \right]ds. 
		\end{aligned}
  \end{equation*}
		Thanks to {the condition} $p_0 \ge (2l+2) \vee (2+4\alpha+4m)$, Proposition \ref{dinh ly 2}, Proposition \ref{nghiem dung 1}, and  Lemma \ref{hq2}, there exists a constant $C>0$, which does not depend on $\Delta$, such that for any $0 \leq t \leq T$, it holds that
		\begin{align*}
		\bE \left[e^{-L_1t}|Z_t| \right]
		&\le e^{-L_1t}\varepsilon + C\left[\varepsilon+\Delta^{\frac 12} +\Delta+\Delta^{\frac 14 + \frac{\alpha}{2}} +\dfrac{ \varepsilon^{2\alpha}}{\log \delta}+\dfrac{\delta \Delta^{1/2+\alpha}}{\varepsilon \log \delta} + \dfrac{\delta \Delta }{\varepsilon \log \delta} \right] \int_{0}^{t} e^{-L_1s}ds. 
		\end{align*}
		
		If $\alpha \in (0,\frac{1}{2}]$, by choosing $\varepsilon=\Delta^{\frac 12}$, $\delta=2$, we obtain
		$\sup_{t\leq T} \bE \left[|Z_t|\right] \le C \Delta^{\alpha}$.
		
		If $\alpha=0$,  by choosing $\varepsilon=\Delta^{\frac 14}$, $\delta=\Delta^{-\frac 14}$, we obtain
			$\sup_{t \leq T} \bE \left[|Z_t|\right] \le \frac{C}{\log \frac{1}{\Delta}}. $
			
   Moreover, if $\gamma, L_1<0$, the constant $C$ does not depend on $T$. 
We conclude the proof of Theorem \ref{convergence}.

\subsection{Proof of Theorem \ref{convergence2}}
\subsubsection{Control drift function} 
In \cite{NT2017}, the authors used the function $\varphi$, which is a solution to the equation $b\varphi' + \frac 12 \sigma^2 \varphi'' = 0$ to handle the discontinuity of the drift coefficient $b$ when it is no longer one-sided Lipschitz. Our assumptions on the boundedness of $b$ and $\sigma$ are not as strict as those in \cite{NT2017}. Moreover, we want to show the convergence of the approximation scheme on the whole interval $(0, +\infty)$. Therefore, we will modify the approach in \cite{NT2017} by introducing a new function $\varphi$ defined as follows. 

First, we consider the following properties on the drift coefficient $b$.
\begin{itemize}
    \item[($P_b1$):] $b(y) \geq 0 $ for all $y > \xi_k$. 
    \item[($P_b2$):] $b(y) \leq 0 $ for all $y < \xi_1$. 
\end{itemize}
 
 If Property ($P_b1$) does not hold, we choose $\xi_{k+1} > \xi_k$ such that $b(\xi_{k+1}) < 0$. Otherwise, we choose $\xi_{k+1} = \xi_k+1$. 

If Property ($P_b2$) does not hold, we choose $\xi_0 < \xi_1$ such that $b(\xi_0) > 0$. Otherwise, we choose $\xi_0 = \xi_1 -1 $.

Note that if $L_1 < 0$ then it follows from Assumption (A'2) that neither Property ($P_b1$) nor Property ($P_b2$) holds, which implies that $b(\xi_0)> 0 > b(\xi_{k+1})$.
Also, note that  $b$ is continuous at $\xi_0$ and $\xi_{k+1}$.


Next, we define a function $\varphi \in C^1(\mathbb R) $ as follows: 
\begin{itemize} 
\item For  $y \in [\xi_0,\xi_{k+1}]$,  
$$ \varphi(y)= b(\xi_0) + \int_{\xi_0}^ y \exp{ \left( \displaystyle\int_{\xi_0}^{x} \frac{-2b(t)}{\sigma^2(t)} \,dt \right)}
           \left[
           \displaystyle\int_{\xi_0}^{x}  \exp{ \left( \displaystyle\int_{\xi_0}^{t} \frac{2b(s)}{\sigma^2(s)} \,ds \right)} \dfrac{2R(t)}{\sigma^2(t)} \,dt 
           + K
           \right]dx, $$
where the constant $K$ is chosen such that
$$
K > 2\int_{\xi_0}^{\xi_{k+1}} \left| \exp{ \left( \int_{\xi_0}^{t} \frac{2b(s)}{\sigma^2(s)} \,ds \right)} \dfrac{2R(t)}{\sigma^2(t)}\right| \,dt + 2\exp{ \left( \int_{\xi_0}^{\xi_{k+1}} \frac{|2b(s)|}{\sigma^2(s)} \,ds \right)} + 2,
$$
and
$$
R(x) = b(\xi_0) + \dfrac{(x-\xi_0)(b(\xi_{k+1})-b(\xi_0))}{\xi_{k+1}-\xi_0},
$$
for any $x \in (\xi_0,\xi_{k+1})$.
\item For $y \in (-\infty, \xi_0)$, 
$$ \varphi(y)= \begin{cases} 
 \varphi(\xi_0) - \displaystyle\int_y^{\xi_0} \left( 1+[\varphi'(\xi_0)-1]\exp\left(\displaystyle \int_{\xi_0}^x \frac{-2b(s)}{\sigma^2(s)}ds\right) \right)dx  & \text{if } (P_b2) \text{ holds}, \\
b(\xi_0) + (y - \xi_0)\varphi'(\xi_0)
           & \text{ otherwise}. 
\end{cases}$$

\item For $y \in ( \xi_{k+1},+\infty)$, then 
$$ \varphi(y)= 
\begin{cases} 
    \varphi(\xi_{k+1}) + \displaystyle\int_{\xi_{k+1}}^y \left( 1+[\varphi'(\xi_{k+1})-1]\exp\left(\displaystyle \int_{\xi_{k+1}}^x \frac{-2b(s)}{\sigma^2(s)}ds\right)\right)dx & \text{if } (P_b1) \text{ holds},\\ 
    \varphi(\xi_{k+1}) + (x - \xi_{k+1})\varphi'(\xi_{k+1})
           & \text{otherwise}, 
\end{cases}$$
\end{itemize}

We show some useful properties of the function $\varphi$.
\begin{Lem} Under the assumption of Theorem \ref{convergence2}, the function $\varphi$ satisfies
    \begin{enumerate}
	\item[\textrm{(P1)}] $\varphi \in C^1(\mathbb R)$,
	\item[\textrm{(P2)}]  There exists a positive constant $H$ such that $1 \leq \varphi'(x) \leq H$ for any $x \in \mathbb R$,
	\item[\textrm{(P3)}]  $\varphi''$ exists and is bounded on $\mathbb R \backslash \Xi$,
	\item[\textrm{(P4)}] $\varphi'(x)b(x) + \dfrac{1}{2}\varphi''(x)\sigma^2(x) = \Psi(x)$ for any $x \in \mathbb R \backslash \Xi$, where
 \begin{equation} \label{def:Psi} 
 \Psi(x) = 
 \begin{cases}
            R(x)
           & \text{if}\quad x \in (\xi_0,\xi_{k+1}) \backslash \Xi,\\
            \varphi'(\xi_0)b(x)
           &\text{if } x \in (-\infty,\xi_0]\quad \text{ and } (P_2b) \text{ does not hold},\\
           b(x) & \text{if } x \in (-\infty,\xi_0]\quad 
           \text{ and } (P_2b) \text{ hold},\\
           \varphi'(\xi_{k+1})b(x)
           & \text{if }  x \in [\xi_{k+1},\infty) 
           \text{ and } (P_1b) \text{ does not hold},\\
           b(x) & \text{if }  x \in [\xi_{k+1},\infty)            \text{ and } (P_1b) \text{ holds}.\\
        \end{cases} 
 \end{equation} 
\end{enumerate}
\end{Lem}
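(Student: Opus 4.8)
The plan is to verify the four properties region by region, using the decomposition $(-\infty,\xi_0)$, $[\xi_0,\xi_{k+1}]$, $(\xi_{k+1},+\infty)$, and gluing at $\xi_0$ and $\xi_{k+1}$. \textbf{The middle piece.} First I would observe that, with $u=\varphi'$, the formula defining $\varphi$ on $[\xi_0,\xi_{k+1}]$ is exactly the variation-of-constants solution of the first-order linear ODE $u'+\tfrac{2b}{\sigma^2}u=\tfrac{2R}{\sigma^2}$ with $u(\xi_0)=K$; equivalently, $\varphi$ solves $b\varphi'+\tfrac12\sigma^2\varphi''=R$ there. Since $b$ is locally Lipschitz (hence continuous) on each $(\xi_i,\xi_{i+1})$ and $\sigma^2\ge\kappa^2>0$, the map $A(x)=\int_{\xi_0}^x\tfrac{2b}{\sigma^2}\,ds$ is Lipschitz (with $A'=\tfrac{2b}{\sigma^2}$ off $\Xi$) and $x\mapsto\int_{\xi_0}^x e^{A(t)}\tfrac{2R(t)}{\sigma^2(t)}\,dt$ is $C^1$; hence $\varphi\in C^1([\xi_0,\xi_{k+1}])$, $\varphi''$ exists on $(\xi_0,\xi_{k+1})\setminus\Xi$ (the jumps of $\varphi''$ at the $\xi_i$ coming exactly from the jumps of $b$), and (P4) on this region is immediate with $\Psi=R$.

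\textbf{The bounds on $\varphi'$ and $\varphi''$ in the middle.} For (P2) on $[\xi_0,\xi_{k+1}]$ I would write $\varphi'(x)=e^{-A(x)}\big(K+\int_{\xi_0}^x e^{A(t)}\tfrac{2R(t)}{\sigma^2(t)}\,dt\big)$ and, on the compact interval, bound $|A(x)|\le M_1:=\int_{\xi_0}^{\xi_{k+1}}\tfrac{|2b|}{\sigma^2}$ and $|\int_{\xi_0}^x e^{A(t)}\tfrac{2R(t)}{\sigma^2(t)}\,dt|\le B:=\int_{\xi_0}^{\xi_{k+1}}|e^{A(t)}\tfrac{2R(t)}{\sigma^2(t)}|\,dt$; the stated choice of $K$ gives $K-B>e^{M_1}$, whence $\varphi'(x)\ge e^{-M_1}(K-B)\ge 1$ and $\varphi'(x)\le e^{M_1}(K+B)=:H_0$ (in particular $\varphi'(\xi_0)=K\ge1$ and $\varphi'(\xi_{k+1})\ge1$). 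Boundedness of $\varphi''$ on $(\xi_0,\xi_{k+1})\setminus\Xi$ then follows from $\varphi''=\tfrac{2(R-b\varphi')}{\sigma^2}$, using that $R$ is affine, $\varphi'\le H_0$, $\sigma^2\ge\kappa^2$, and that $b$ is bounded on each closed subinterval $[\xi_i,\xi_{i+1}]$ (it has finite one-sided limits at the $\xi_i$ by (A3)).

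\textbf{Gluing and the outer pieces.} Evaluating the middle formula at the endpoints gives $\varphi(\xi_0)=b(\xi_0)$, $\varphi'(\xi_0)=K$, and finite $\varphi(\xi_{k+1}),\varphi'(\xi_{k+1})$; since each outer formula is built so that its value and derivative at the matching endpoint are precisely these numbers, (P1) follows from the usual one-point gluing of $C^1$ functions. On an outer interval, if the relevant sign property $(P_b1)$, resp. $(P_b2)$, fails, then $\varphi$ is affine with constant slope $\varphi'(\xi_{k+1})$, resp. $\varphi'(\xi_0)$, both in $[1,H_0]$, so (P2)--(P4) are trivial ($\varphi''=0$, $\Psi=\varphi'(\cdot)b$). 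If, say, $(P_b1)$ holds, then $b\ge0$ on $(\xi_{k+1},\infty)$, so $\int_{\xi_{k+1}}^x\tfrac{-2b}{\sigma^2}\le0$ and the exponential factor in $\varphi'$ lies in $(0,1]$; hence $1\le\varphi'(x)\le\varphi'(\xi_{k+1})\le H_0$, and substituting the explicit $\varphi',\varphi''$ into $b\varphi'+\tfrac12\sigma^2\varphi''$ cancels the exponential terms and leaves $\Psi(x)=b(x)$, as in \eqref{def:Psi}; the case $(-\infty,\xi_0)$ under $(P_b2)$ is symmetric.

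\textbf{The main obstacle: boundedness of $\varphi''$ in the tails.} The one delicate point is (P3) on $(\xi_{k+1},\infty)$ when $(P_b1)$ holds (and symmetrically on $(-\infty,\xi_0)$ under $(P_b2)$), where $|\varphi''(x)|=(\varphi'(\xi_{k+1})-1)\,g(x)\,e^{-\int_{\xi_{k+1}}^x g}$ with $g=\tfrac{2b}{\sigma^2}\ge0$, and both $b$ and $\sigma$ may be unbounded. Here I would exploit Assumption (A6) together with the one-sided Lipschitz bound (A'2): on an interval $[x-h,x]$ of the fixed length $h$ with $|x|\ge\xi'+h$, (A6) yields $\sigma^2(s)\le L_4^2\sigma^2(x)$, while (A'2) on $(\xi_k,\infty)^2$ yields $b(s)\ge b(x)-L_1h$ for $s\in[x-h,x]$; consequently, whenever $b(x)\ge 2L_1h$ (and if $b(x)<2L_1h$ then $g(x)$ is already bounded) one gets $g(s)\ge\tfrac{1}{2L_4^2}g(x)$ on that interval, hence $\int_{\xi_{k+1}}^x g\ge\tfrac{h}{2L_4^2}g(x)$ and therefore $|\varphi''(x)|\le(\varphi'(\xi_{k+1})-1)\sup_{t\ge0}t\,e^{-\frac{h}{2L_4^2}t}<\infty$. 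Since $\varphi''$ is continuous on the remaining compact portion of the interval, it is bounded on all of $(\xi_{k+1},\infty)$; and when $L_1<0$ one simply recalls that (A'2) forces both $(P_b1)$ and $(P_b2)$ to fail, so $\varphi$ is affine in both tails and nothing is to be proved. I expect this local comparison — converting the favorable sign of $b$ plus the local multiplicative control of $\sigma$ from (A6) into a lower bound on $\int g$ of the same order as $g(x)$ — to be the heart of the argument.
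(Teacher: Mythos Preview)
Your proof is correct and follows essentially the same route as the paper: the paper likewise declares (P1), (P2), (P4) to be routine and isolates (P3) in the tails under $(P_b1)$ (resp.\ $(P_b2)$) as the only nontrivial point, handling it via (A6), (A'2), (A'5) and the elementary bound $\sup_{t>0} t\,e^{-ct}<\infty$. The sole technical variation is that the paper applies the mean value theorem to write $\int_{x-h}^{x} g = h\,g(\xi)$ for some $\xi\in[x-h,x]$ and then decomposes $g(x)-g(\xi)$ using (A6) and (A'2), whereas you obtain the pointwise lower bound $g(s)\ge \tfrac{1}{2L_4^2}\,g(x)$ on $[x-h,x]$ directly and integrate; the two arguments are equivalent in strength and rest on exactly the same hypotheses.
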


\begin{proof} 
One can proves Properties (P1),(P2) and (P4) easily. To verify Property (P3), we will show that  $\sup_{x > \xi_{k+1}} |\varphi''(x)| < + \infty$. The proof for the fact  that  $\sup_{x <  \xi_0} |\varphi''(x)| < + \infty$ is similar. 

\textit{Case 1:} Suppose (A'2) holds for some $L_1 <0$ then  Property ($P_b1$) does not hold,  the result follows straightforward from the fact that  $\varphi''(x)=0$ for any $x\geq \xi_{k+1}$. 

\textit{Case 2:} Suppose that both (A6) and ($P_b1$) hold. Then for any $x>\max\{\xi'+h,\xi_{k+1}+h\}$ 
\begin{align*}
    |\varphi''(x)|&=[\varphi'(\xi_{k+1})-1]\frac{2b(x)}{\sigma^2(x)}\exp\left(\displaystyle \int_{\xi_{k+1}}^x \frac{-2b(s)}{\sigma^2(s)}ds\right) \\
    & \leq [\varphi'(\xi_{k+1})-1]\frac{2b(x)}{\sigma^2(x)}\exp\left(\displaystyle \int_{x-h}^x \frac{-2b(s)}{\sigma^2(s)}ds\right).
\end{align*}
By the mean value theorem, there exists  $\xi \in [x-h,x]$ such that
    \begin{align*}
    |\varphi''(x)| \leq& [\varphi'(\xi_{k+1})-1]\frac{2b(x)}{\sigma^2(x)}\exp\left(h \frac{-2b(\xi)}{\sigma^2(\xi)}\right)\\
    = & [\varphi'(\xi_{k+1})-1]\frac{2b(\xi)}{\sigma^2(\xi)}\exp\left(h \frac{-2b(\xi)}{\sigma^2(\xi)}\right)\\
    &  +[\varphi'(\xi_{k+1})-1]\exp\left(h \frac{-2b(\xi)}{\sigma^2(\xi)}\right)
    \frac{2b(\xi)}{\sigma^2(\xi)} \Big( \frac{\sigma^2(\xi) }{\sigma^2(x)} - 1\Big)\\
    & + [\varphi'(\xi_{k+1})-1]\exp\left(h \frac{-2b(\xi)}{\sigma^2(\xi)}\right)
    \frac{(b(x) - b(\xi))(x - \xi)}{\sigma^2(\xi) (x - \xi)}.
    \end{align*}

    Note that $\sup_{x>0} x e^{-hx} < +\infty$. Using this fact and Assumptions (A2), (A5), and (A6), we obtain the desired result. 
\end{proof}


We can further see that the function $\Psi$ defined in \eqref{def:Psi} is one-sided Lipschitz continuous, i.e. there exists a constant $l_\Psi$ such that
$$(x-y)(\Psi(x)-\Psi(y))\leq l_\Psi(x-y)^2, \text{ for all } x,y \in \mathbb R.$$
We denote $\bmS :=  \cup_{i=0}^{k} (\xi_i,\xi_{i+1})^2 \cup (-\infty, \xi_0)^2 \cup (\xi_{k+1}, +\infty)^2$, and 
\begin{equation} \label{defLpsi} L_\Psi = \begin{cases} \frac{l_\Psi}{H} & \text{if } l_\Psi <0, \\ l_\Psi &\text{if } l_\Psi \geq 0.\end{cases} 
\end{equation} 

\subsubsection{Proof of Theorem \ref{convergence2}}
First, we have
$|Z_t|=|X_t-Y_t|\leq |\varphi(X_t)-\varphi(Y_t)|\leq \varepsilon+\phi_{\delta_\varepsilon}(\varphi(X_t)-\varphi(Y_t)).$
By using It\^o's formula and Property (P4), we have  
\begin{align}  \label{eq: 20}
e^{-L_\Psi t}\phi_{\delta\varepsilon}(\varphi(X_t)-\varphi(Y_t)) =J_1+J_2+J_3+J_4+J_5+J_6,
\end{align} 
where 
\begin{align*}
    J_1  &=
  \displaystyle \int_{0}^T -L_{\Psi} e^{-L_\Psi t}\phi_{\delta\varepsilon}(\varphi(X_t)-\varphi(Y_t)) \, dt, \notag \\
  J_2  & =  \displaystyle \int _0^T e^{-L_{\Psi} t}\phi'_{\delta\varepsilon}(\varphi(X_t)-\varphi(Y_t))\left(\Psi(X_t)-\Psi(Y_t)\right) \, dt,
  \notag \\
   J_3 &=   \displaystyle \int_0^T e^{-L_{\Psi }t}\phi'_{\delta\varepsilon}(\varphi(X_t)-\varphi(Y_t)) \left[ \varphi'(Y_t)(b(Y_{\underline{t}})- b(Y_t)) \right] \, dt,  \notag \\
    J_4 &=  \displaystyle \int_0^T e^{-L_{\Psi }t}\phi'_{\delta\varepsilon}(\varphi(X_t)-\varphi(Y_t)) \left[ \dfrac{1}{2} \sigma_\Delta^2(Y_{\underline{t}}) \varphi''(Y_t)
    - \dfrac{1}{2} \sigma^2(Y_t) \varphi''(Y_t) \right] \, dt,  \notag \\
    J_5 &=  \displaystyle \int_0^T \dfrac{1}{2}e^{-L_{\Psi}t}\phi''_{\delta\varepsilon}(\varphi(X_t)-\varphi(Y_t)) \left[ \varphi'(X_t)\sigma(X_t) - \varphi'(Y_t)\sigma_\Delta(Y_{\underline{t}}) \right]^2 \, dt,  \notag \\
    J_6 & = \displaystyle \int_0^T  e^{-L_{\Psi}t}\phi'_{\delta\varepsilon}(\varphi(X_t)-\varphi(Y_t))\left[ \varphi'(X_t)b(X_t) - \varphi'(Y_t)b(Y_{\underline{t}}) \right] \, dW_t. \notag
		\end{align*}
We now give a bound for each term on the left-hand side of \eqref{eq: 20}. First, by Property (YW1), we have
\begin{align*}
		\phi'_{\delta\varepsilon}(\varphi(X_t)-\varphi(Y_t))\left(\Psi(X_t)-\Psi(Y_t)\right)
  &=  \dfrac{\phi'_{\delta\varepsilon}(\varphi(X_t)-\varphi(Y_t))}{\varphi(X_t)-\varphi(Y_t)}\left(\varphi(X_t)-\varphi(Y_t)\right)\left(\Psi(X_t)-\Psi(Y_t)\right) \\
  &  =  \dfrac{\phi'_{\delta\varepsilon}(|\varphi(X_t)-\varphi(Y_t)|)}{|\varphi(X_t)-\varphi(Y_t)|}\dfrac{\varphi(X_t)-\varphi(Y_t)}{X_t - Y_t}(X_t - Y_t)\left(\Psi(X_t)-\Psi(Y_t)\right) \\
  & \leq  \dfrac{\phi'_{\delta\varepsilon}(|\varphi(X_t)-\varphi(Y_t)|)}{|\varphi(X_t)-\varphi(Y_t)|}\dfrac{\varphi(X_t)-\varphi(Y_t)}{X_t - Y_t}l_\Psi(X_t - Y_t)^2 \\
& =  l_{\Psi}\dfrac{\phi'_{\delta\varepsilon}(|\varphi(X_t)-\varphi(Y_t)|)}{|\varphi(X_t)-\varphi(Y_t)|}\left(\varphi(X_t)-\varphi(Y_t)\right)(X_t - Y_t).
\end{align*}
Using the definition of $L_\Psi$ in \eqref{defLpsi} and Property (P2), we get 
 \begin{align*}
		\phi'_{\delta\varepsilon}(\varphi(X_t)-\varphi(Y_t))\left(\Psi(X_t)-\Psi(Y_t)\right)    & \leq  L_{\Psi}\dfrac{\phi'_{\delta\varepsilon}(|\varphi(X_t)-\varphi(Y_t)|)}{|\varphi(X_t)-\varphi(Y_t)|}\left(\varphi(X_t)-\varphi(Y_t)\right)^2\\
      & =  L_{\Psi}\phi'_{\delta\varepsilon}(|\varphi(X_t)-\varphi(Y_t)|)|\varphi(X_t)-\varphi(Y_t)|.
\end{align*}
Since $-L_\Psi\phi_{\delta\varepsilon}(x)+L_\Psi|x|\phi'_{\delta\varepsilon}(|x|)\leq \max\{L_\Psi\varepsilon,0 \}$, we get 
\begin{align*}
     J_1+J_2=&\displaystyle \int_0^T \left[-L_\Psi e^{-L_\Psi t}\phi_{\delta\varepsilon}(\varphi(X_t)-\varphi(Y_t))  + e^{-L_\Psi t}\phi'_{\delta\varepsilon}(\varphi(X_t)-\varphi(Y_t))\left(\Psi(X_t)-\Psi(Y_t)\right) \right]\, dt\\
 \leq &\displaystyle \int_0^T \max\{L_\Psi  \varepsilon,0\} e^{-L_\Psi t}dt.
\end{align*}

For the rest of the proof, we denote by  $K_1,K_2,...$ some constants that do not depend on $\Delta$. Moreover, when $\gamma<0$, these constants do not depend on $t$ either. 

Using Properties (YW2),(P2), Assumption (A3), and Remark \ref{rm2.1}, we get
\begin{align*}
&\left|\phi'_{\delta_\varepsilon}(\varphi(X_t)-\varphi(Y_t)) \varphi'(Y_t)(b(Y_{\underline{t}})- b(Y_t))  \right|  
 \leq H\left| b(Y_{\underline{t}})- b(Y_t)  \right| \\
& = H\left| b(Y_{\underline{t}})- b(Y_t)  \right|1_{ \bmS}(Y_t,Y_{\underline{t}}) + H\left| b(Y_{\underline{t}})- b(Y_t)  \right|1_{\mathbb{R}^2\backslash \bmS}(Y_t,Y_{\underline{t}})\\
& \leq K_1 \left( 1_{ \bmS}(Y_t,Y_{\underline{t}}) +  (1+|Y_t|^l+|Y_{\underline{t}}|^l )|Y_t-Y_{\underline{t}}|\right).
\end{align*}
Then, Proposition \ref{dinh ly 2}, Lemma \ref{hq2} and Lemma \ref{tichphanxacsuatchuyen2} give
\begin{equation*}
   \mathbb E[J_3]\leq K_1 \displaystyle \int_0^T  e^{-L_\Psi t}\mathbb E\left(1_{ \bmS}(Y_t,Y_{\underline{t}}) + (1+|Y_t|^l+|Y_{\underline{t}}|^l )|Y_t-Y_{\underline{t}}|\right) dt \leq
    K_2 \displaystyle \int_0^T e^{-L_\Psi t}\Delta^{\frac{1}{4}+\frac{\alpha}{2}} \, dt.
\end{equation*}
 For $J_4$, we first note that  
\begin{align*}
&\left| \phi'_{\delta_\varepsilon}(\varphi(X_t)-\varphi(Y_t)) \left[  \sigma_\Delta^2(Y_{\underline{t}}) \varphi''(Y_t)
    -  \sigma^2(Y_t) \varphi''(Y_t) \right] \right| \\
    &\leq
    K_3 \left[(1+|Y_t|^{2m+\alpha+1/2}+|Y_{\underline{t}}|^{2m+\alpha+1/2})|Y_t-Y_{\underline{t}}|^{1/4+\alpha/2} + (1+|Y_{\underline{t}}|^{2m+2\alpha+1})\Delta^{1/2} \right];
\end{align*} 
hence, by Proposition \ref{dinh ly 2} and Lemma \ref{hq2}
\begin{align*}
   \mathbb E[J_4]\leq &\displaystyle \int_0^T \mathbb E \left[ \Bigg| e^{-L_\Psi t}\phi'_{\delta_\varepsilon}(\varphi(X_t)-\varphi(Y_t)) \left( \dfrac{1}{2} \sigma_\Delta^2(Y_{\underline{t}}) \varphi''(Y_t)
    - \dfrac{1}{2} \sigma^2(Y_t) \varphi''(Y_t) \right)\Bigg| \right] \, dt\\
    \leq & K_4 \displaystyle \int_0^T e^{-L_\Psi t} \mathbb E \left[(1+|Y_t|^{2m+\alpha+1/2}+|Y_{\underline{t}}|^{2m+\alpha+1/2})|Y_t-Y_{\underline{t}}|^{1/2+\alpha} + (1+|Y_{\underline{t}}|^{2m+2\alpha+1})\Delta^{1/2} \right]dt\\
    \leq &K_5 \displaystyle \int_0^T  e^{-L_\Psi t}(\Delta^{1/4+\alpha/2}+\Delta^{1/2}) \, dt.
\end{align*}
For $J_5$, using the estimate $(a+b+c+d)^2 \leq 4(a^2 + b^2 + c^2 + d^2)$, we get  
\begin{align*}
&\displaystyle\int_0^T e^{-L_\Psi t}\phi''_{\delta_\varepsilon}(\varphi(X_t)-\varphi(Y_t)) \left[ \varphi'(X_t)\sigma(X_t) - \varphi'(Y_t)\sigma_\Delta(Y_{\underline{t}}) \right]^2dt\\ &
\leq 4\displaystyle\int_0^T e^{-L_\Psi t} \phi''_{\delta_\varepsilon}(\varphi(X_t)-\varphi(Y_t)) \left[ \varphi'(X_t) \right]^2 \left[\sigma(X_t) - \sigma(Y_t) \right]^2dt\\
&
+ 4 \displaystyle\int_0^T e^{-L_\Psi t}\phi''_{\delta_\varepsilon}(\varphi(X_t)-\varphi(Y_t)) \left[ \varphi'(Y_t) \right]^2 \left[\sigma(Y_t) - \sigma(Y_{\underline{t}}) \right]^2dt\\
&
+ 4 \displaystyle\int_0^T e^{-L_\Psi t}\phi''_{\delta_\varepsilon}(\varphi(X_t)-\varphi(Y_t)) \left[ \varphi'(Y_t) \right]^2 \left[\sigma_\Delta(Y_{\underline{t}} - \sigma(Y_{\underline{t}}) \right]^2dt\\
&
+ 4\displaystyle\int_0^T e^{-L_\Psi t} \phi''_{\delta_\varepsilon}(\varphi(X_t)-\varphi(Y_t)) \sigma^2(Y_t)  \left[\varphi'(X_t) - \varphi'(Y_t) \right]^2dt\\
&:= 4(J_{5,1}+J_{5,2}+J_{5,3}+J_{5,4}).
\end{align*}
Using Properties (YW5), (P2), Assumption (A4), Proposition \ref{dinh ly 2} and  Proposition \ref{nghiem dung 1}, we get 
\begin{align*}
    \mathbb E[J_{5,1}] \leq &H^2 \displaystyle\int_0^T e^{-L_\Psi t}\mathbb E\left[\phi''_{\delta\varepsilon}(\varphi(X_t)-\varphi(Y_t))|\varphi(X_t)-\varphi(Y_t)|\frac{\big[\sigma(X_t)-\sigma(Y_t)\big]^2}{|\varphi(X_t)-\varphi(Y_t)|}\right]dt\\
    &\leq K_6 \displaystyle\int_0^T e^{-L_\Psi t} \mathbb E\left[\frac{2}{ \log \delta}1_{\{|\varphi(X_t)-\varphi(Y_t)|\leq \varepsilon\}}\frac{|X_t-Y_t|^{1+2\alpha}(1+|X_t|^{2m}+|Y_t|^{2m})}{|\varphi(X_t)-\varphi(Y_t)|}\right]dt\\
    &\leq K_7 \displaystyle\int_0^T e^{-L_\Psi t}\frac{\varepsilon^{2\alpha}}{\log \delta}\mathbb E\left[(1+|X_t|^{2m}+|Y_t|^{2m})\right]dt\\
    &\leq K_8 \displaystyle\int_0^T e^{-L_\Psi t}\frac{\varepsilon^{2\alpha}}{\log\delta} \,dt,
\end{align*}
and 
\begin{align*}
   \mathbb E[ J_{5,4}] &=\displaystyle\int_0^T e^{-L_\Psi t} \mathbb E\left[\phi''_{\delta\varepsilon}(\varphi(X_t)-\varphi(Y_t))|\varphi(X_t)-\varphi(Y_t)|\sigma^2(Y_t)\frac{|\varphi'(X_t)-\varphi'(Y_t)|^2}{|\varphi(X_t)-\varphi(Y_t)|^2}|\varphi(X_t)-\varphi(Y_t)|  \right] \,dt\\
    &\leq K_9 \displaystyle\int_0^T e^{-L_\Psi t}\mathbb E\left[ \frac{2}{\log \delta}1_{\{|\varphi(X_t)-\varphi(Y_t)|\leq \varepsilon\}}|\varphi(X_t)-\varphi(Y_t)|\sigma^2(Y_t) \right] \,dt\\
    &\leq K_{10} \displaystyle\int_0^T e^{-L_\Psi t} \frac{2}{\log \delta}\varepsilon\mathbb E\left[\sigma^2(Y_t)\right] \,dt\\
    & \leq K_{11} \displaystyle\int_0^T e^{-L_\Psi t} \frac{2\varepsilon}{\log \delta} \,dt.
\end{align*}
Using Properties (YW5), (P2), Assumption (A4), Proposition \ref{dinh ly 2}, and Lemma \ref{hq2}, we get 
\begin{align*}
    \mathbb E[J_{5,2}]\leq K_{12} \displaystyle\int_0^T e^{-L_\Psi t} \mathbb E\left[\big(\sigma(Y_t)-\sigma(Y_{\underline t})\big)^2\frac{2\delta}{\varepsilon\log\delta} \,\right] dt \leq K_{13} \displaystyle\int_0^T e^{-L_\Psi t}\Delta^{\alpha+1/2}\frac{2\delta}{\varepsilon\log\delta} \, dt.
\end{align*}
Using similar estimates as in \eqref{sigmadelta}, we get 
\begin{align*}
    \mathbb E[J_{5,3}]\leq K_{14} \displaystyle\int_0^T e^{-L_\Psi t}\mathbb E\left[\frac{2\delta}{\varepsilon\log\delta}\left(\sigma_\Delta(Y_{\underline{t}}) - \sigma(Y_{\underline{t}}) \right)^2\right] \, dt\leq K_{15} \displaystyle\int_0^T e^{-L_\Psi t} \frac{2\delta}{\varepsilon\log\delta}\Delta \,dt.
\end{align*}
Finally,
$\mathbb E[J_6]=0.$
To sum up, 
\begin{align*}\mathbb E[e^{-L_\Psi T}|Z_T|]& \leq \mathbb E[J_1+J_2+J_3+J_4+J_5+J_6]+ e^{-L_\Psi T}\varepsilon \\
&\leq K_{16}\displaystyle\int_0^T e^{-L_{\Psi}t}\left[\varepsilon+\Delta^{1/2} +\Delta^{\frac 14 + \frac{\alpha}{2}}+ \dfrac{ \varepsilon^{2\alpha}}{\log \delta}+\dfrac{\delta \Delta^{1/2+\alpha}}{\varepsilon \log \delta} + \dfrac{\delta \Delta }{\varepsilon \log \delta}+\frac{2\varepsilon}{\log \delta} \right] \,dt.
\end{align*}
If $\alpha \in (0,\frac{1}{2}]$, then by choosing $\varepsilon=\Delta^{1/2}$ and $\delta=2$, we obtain
\begin{equation} \label{thang1}
\mathbb E[|Z_T|] \leq C \frac{e^{L_\Psi T} -1}{L_\Psi} \Delta^\alpha.
\end{equation} 
			 If $\alpha=0$, then by choosing $\varepsilon=\Delta^{1/4}$ and $\delta=\Delta^{-1/4}$, we obtain
			\begin{equation} \label{thang2}
			\sup_{t\in [0,T]} \bE \left[|Z_t|\right] \le C \frac{e^{L_\Psi T} -1}{L_\Psi} \frac{1}{\log \frac{1}{\Delta}}. 
			\end{equation}
Recall that the constants $C$ in \eqref{thang1} and \eqref{thang2} do not depend on $T$ when $L_1, \gamma$ and $L_\Psi$ are negative. This implies the desired result. 



    \subsection{Proof of Theorem \ref{cost analysis}}
    
    \begin{proof}
    We first note that 
    $$N_{T}=1+\sum_{k=1}^{\infty}1_{\{t_k<T\}}=1+\sum_{k=1}^{\infty}1_{\{t_k<T\}}\displaystyle \int_{t_{k-1}}^{t_{k}}\frac{1}{h_{\Delta}(\overline{Y}_s)}ds=1+\displaystyle \int _{0}^T\frac{1}{h_{\Delta}(\overline{Y}_s)}ds.$$
We write $\mathbb E\Big[\displaystyle \int _{0}^T\frac{1}{h_{\Delta}(\overline{Y}_s)}ds\Big]= I_1 + I_2 + I_3$, where 
    $ I_1 =\mathbb E\Big[\displaystyle \int _{0}^T\frac{1}{h_{\Delta}(\overline{Y}_s)}1_{(\Xi^{\varepsilon_1})^c}(Y_{\underline{s}})ds\Big]$,\\ 
    $I_2 = \mathbb E\Big[\displaystyle \int _{0}^T\frac{1}{h_{\Delta}(\overline{Y}_s)}1_{\Xi^{\varepsilon_1}\setminus\Xi^{\varepsilon_2}}(Y_{\underline{s}})ds\Big],$ 
    $I_3  = \mathbb E\Big[\displaystyle \int _{0}^T\frac{1}{h_{\Delta}(\overline{Y}_s)}1_{\Xi^{\varepsilon_2}}(Y_{\underline{s}})ds\Big].$
 
 For the rest of the proof, we denote by  $K_1,K_2,...$ some constants that do not depend on $\Delta$. Moreover, when $\gamma<0$, these constants do not depend on $t$ either. 
    It follows from \eqref{chooseh} and  Proposition \ref{dinh ly 2}  that 
    \begin{align*}
        I_1&\leq\mathbb E\Big[\displaystyle \int _{0}^T\frac{\big[1+|b(Y_{\underline{s}})|+|\sigma(Y_{\underline{s}})|+|Y_{\underline{s}}|^l\big]^2}{\Delta}ds\bigg]       \leq K_1 T\Delta^{-1}. \label{I1}
    \end{align*}
     Thanks to Assumptions (A3) and (A4),   
     $\sup_{y\in \Xi^{\varepsilon_2}}\Big[1+|b(y)|+|\sigma(y)|+|y|^l\Big]^2 < +\infty.$  It follows from \eqref{chooseh} that 
   \begin{equation}
    \begin{aligned}
        I_3    &=\mathbb E\Big[\displaystyle \int _{0}^T\frac{\big[1+b(Y_{\underline{s}})+\sigma(Y_{\underline{s}})+|Y_{\underline{s}}|^l\big]^2}{\Delta^2\log^4(1/\Delta)}1_{\Xi^{\varepsilon_2}}(Y_{\underline{s}})ds\Big]
    \leq \frac{K_2}{\Delta^2\log^4(1/\Delta)} \displaystyle\int_0^T\mathbb P\Big(Y_{\underline{s}}\in \Xi^{\varepsilon_2}\Big)ds. 
    \end{aligned} \label{est:I3}
    \end{equation}
    Similarly, we have 
    \begin{align*}
        I_2&=\mathbb E\Big[\displaystyle \int _{0}^T\frac{\log^4(1/\Delta)\big[1+|b(Y_{\underline{s}})|+|\sigma(Y_{\underline{s}})|+|Y_{\underline{s}}|^l\big]^2}{\big[d(Y_{\underline{s}},\Xi)\big]^2}1_{\Xi^{\varepsilon_1}\setminus \Xi^{\varepsilon_2}}(Y_{\underline{s}})ds\bigg]\\
        &\leq K_3 \log^4(1/\Delta)\mathbb E\Big[\displaystyle \int _{0}^T\frac{1}{\big[d(Y_{\underline{s}},\Xi)\big]^2}1_{\Xi^{\varepsilon_1}\setminus \Xi^{\varepsilon_2}}(Y_{\underline{s}})ds\bigg] =  K_3 \log^4(1/\Delta)(I_{21} + I_{22}),
        \end{align*}
        where 
        \begin{align*}
      I_{2,1} & = \mathbb E\Big[\displaystyle \int _{0}^T\frac{1}{\big[d(Y_{\underline{s}},\Xi)\big]^2}1_{\Xi^{\varepsilon_1}\setminus \Xi^{\varepsilon_2}}(Y_{\underline{s}})1_{\{|Y_s-Y_{\underline{s}}|\geq \frac{1}{2}d(Y_{\underline{s}},\Xi)\}}ds\bigg],\\    I_{2,2}& = \mathbb E\Big[\displaystyle \int _{0}^T\frac{1}{\big[d(Y_{\underline{s}},\Xi)\big]^2}1_{\Xi^{\varepsilon_1}\setminus \Xi^{\varepsilon_2}}(Y_{\underline{s}})1_{\{|Y_s-Y_{\underline{s}}|< \frac{1}{2}d(Y_{\underline{s}},\Xi)\}}ds\bigg].
    \end{align*}
  
   By applying Lemma \ref{xac suat chuyen}.ii, for all $\Delta<\Delta_0$ we obtain
    \begin{align*}
        I_{2,1}
        &\leq \frac{1}{\varepsilon_2^2}\displaystyle\int_{0}^T \mathbb P\Big(|Y_s-Y_{\underline{s}}|\geq \frac 12 d(Y_{\underline{s}},\Xi),  Y_{\underline{s}} \in \Xi^{\varepsilon_1}\setminus \Xi^{\varepsilon_2}\Big)ds \leq K_4 \frac{T}{\Delta^2\log^8(1/\Delta)}\Delta^{\frac 12\log(1/\Delta)}.
     \end{align*}    
     
       Note that $|Y_s-Y_{\underline{s}}|< \frac 12 d(Y_{\underline{s}},\Xi)$ implies $\frac 12 d(Y_{\underline{s}},\Xi)\leq d(Y_s,\Xi)\leq \frac 32 d(Y_{\underline{s}},\Xi)$. Hence, if $\varepsilon_2\leq d(Y_{\underline{s}},\Xi)\leq \varepsilon_1$, then $\frac 12 \varepsilon_2\leq d(Y_s,\Xi)\leq \frac 32 \varepsilon_1.$ Hence
    \begin{align*}
        I_{2,2}        &\leq  \frac 94\mathbb E\Big[\displaystyle \int _{0}^T\frac{1}{\big[d(Y_{s},\Xi)\big]^2}1_{ \Xi^{\frac 32 \varepsilon_1}\setminus \Xi^{\frac 12\varepsilon_2}}(Y_{s})ds\Big].
    \end{align*}
    
Now we will consider two cases. 

\vskip 0.2cm
\noindent 
\textit{Case 1}: $[\frac{p_0}{2}] >  \frac{3}{2\alpha + 1} (1 + 2\alpha + 2m)$.


We can choose $n$ such that $n > \frac{3}{2\alpha + 1} $ and $[\frac{p_0}{2}] >  \frac{3}{2\alpha + 1} (1 + 2\alpha + 2m)n$, and then choose $p \in (\frac{4}{2\alpha +1}, \frac{4n}{3})$. Let  $q=\frac{4n-3}{4n}$.  Define the sequence $a_i$ as follow: $a_0=\frac 12$, and 
$a_{i+1}=\frac12(a_i(q+\frac{\alpha}{2}+\frac 14)+1)$ for $i\geq0$. Let $\varepsilon_{a_i}:=\Delta^{a_i}\log^{4a_i}(1/\Delta)$. 
Since $q+\frac{\alpha}{2}+\frac 14>1$, we have $\lim\limits_{i\rightarrow \infty}a_i>1$. Thus, there exists a positive integer $h$ such that  $a_h\leq 1<a_{h+1}$. 
 Then we write 
 \begin{align*}
       I_{2,2}
        &\leq \frac 94\mathbb E\Big[\displaystyle \int _{0}^T\frac{1}{\big[d(Y_{s},\Xi)\big]^2}1_{ \Xi^{\frac 32 \varepsilon_1}\setminus \Xi^{ \varepsilon_{a_1}}}(Y_s)ds\Big]+\frac 94\mathbb E\Big[\displaystyle \int _{0}^T\frac{1}{\big[d(Y_{s},\Xi)\big]^2}1_{ \Xi^{ \varepsilon_{a_h}}\setminus \Xi^{\frac 12\varepsilon_2}}(Y_s)ds\Big]\\
        &+\frac 94 \sum\limits_{i=1}^{h-1}\mathbb E\Big[\displaystyle \int _{0}^T\frac{1}{\big[d(Y_{s},\Xi)\big]^2}1_{ \Xi^{ \varepsilon_{a_{i}}}\setminus \Xi^{\varepsilon_{a_{i+1}}}}(Y_s)ds\Big]\\
        &\leq\frac 94 \sum\limits_{i=0}^{h-1}\mathbb E\Big[\displaystyle \int _{0}^T\frac{1}{\big[d(Y_{s},\Xi)\big]^2}1_{ \Xi^{\frac 32 \varepsilon_{a_{i}}}\setminus \Xi^{\frac 12\varepsilon_{a_{i+1}}}}(Y_s)ds\Big]
        +\frac 94 \mathbb E\Big[\displaystyle \int _{0}^T\frac{1}{\big[d(Y_{s},\Xi)\big]^2}1_{ \Xi^{\frac 32 \varepsilon_{a_{h}}}\setminus \Xi^{\frac 12\varepsilon_2}}(Y_s)ds\Big].
\end{align*}
Applying  Lemma \ref{localtime} with $f(x)=\frac{1}{(\text{max}\{\frac 12 \varepsilon_{a_{i+1}}, x\})^2},\varepsilon=\frac 32\varepsilon_{a_{i}}$, we have that for each $i=0,\ldots,h-1,$
\begin{align*}
   & \mathbb E\Big[\displaystyle \int _{0}^T\frac{1}{\big[d(Y_{s},\Xi)\big]^2}1_{ \Xi^{\frac 32 \varepsilon_{a_{i}}}\setminus \Xi^{\frac 12\varepsilon_{a_{i+1}}}}(Y_s)ds\Big]\leq \mathbb E\Big[\displaystyle \int _{0}^T\frac{1}{\max\big\{\frac 12 \varepsilon_{a_{i+1}},d(Y_{s},\Xi)\big\}^2}1_{ \Xi^{\frac 32 \varepsilon_{a_{i}}}\setminus \Xi^{\frac 12\varepsilon_{a_{i+1}}}}(Y_s)ds\Big]\\
    &\leq  K_5 T\displaystyle \int_{0}^{\frac 32 \varepsilon_{a_{i}}}\frac{1}{(\text{max}\{ \frac 12\varepsilon_{a_{i+1}}, x\})^2}dx+K_5  T \frac{4}{\varepsilon_{a_{i+1}}^2}\Big(\big(\frac 32 \varepsilon_{a_{i}}\big)^{q+\frac{\alpha}{2}+\frac 14}+\Delta+\Delta^{(\frac 14+\frac{\alpha}{2})p}\Big) \\
    & \leq K_6 T(\Delta^{-1}+\Delta^{(\frac 14 +\frac{\alpha}{2})p-2})\frac{1}{\log^4(1/\Delta)}\\
    & \leq 2K_6 T \Delta^{-1}\frac{1}{\log^4(1/\Delta)},
\end{align*}
where the last estimate follows from the fact that $(\frac 14+\frac{\alpha}{2})p  > 1$.\\
Similarly, applying  Lemma \ref{localtime} with $f(x)=\frac{1}{(\text{max}\{\frac 12 \varepsilon_{2}, x\})^2},\varepsilon=\frac 32\varepsilon_{a_{h}}$, we have 
\begin{align*}
   & \mathbb E\Big[\displaystyle \int _{0}^T\frac{1}{\big[d(Y_{s},\Xi)\big]^2}1_{ \Xi^{\frac 32 \varepsilon_{a_{h}}}\setminus \Xi^{\frac 12\varepsilon_2}}(Y_s)ds\Big]\leq \mathbb E\Big[\displaystyle \int _{0}^T\frac{1}{\max\big\{\frac 12 \varepsilon_2,d(Y_{s},\Xi)\big\}^2}1_{ \Xi^{\frac 32 \varepsilon_{a_{h}}}\setminus \Xi^{\frac 12\varepsilon_{2}}}(Y_s)ds\Big]\\
    &\leq  K_7 T\displaystyle \int_{0}^{\frac 32 \varepsilon_{a_{h}}}\frac{1}{(\text{max}\{ \frac 12\varepsilon_{2}, x\})^2}dx+K_7 T \frac{4}{\varepsilon_{2}^2}\Big(\big(\frac 32 \varepsilon_{a_h}\big)^{q+\frac{\alpha}{2}+\frac 14}+\Delta+\Delta^{(\frac 14+\frac{\alpha}{2})p}\Big) \\
    & \leq {K_8 T(\Delta^{-1}+\Delta^{(\frac 14 +\frac{\alpha}{2})p-2})}\frac{1}{\log^4(1/\Delta)}\\
     & \leq K_9 T \Delta^{-1}\frac{1}{\log^4(1/\Delta)}.
\end{align*}
   To estimate $I_3$, applying Lemma \ref{localtime} with $f(x)=1$ and $\varepsilon=\varepsilon_2$, it follows from \eqref{est:I3} that  
    \begin{align*}
        I_3       &\leq \frac{K_{10} T}{\Delta^2\log^4(1/\Delta)} \Big(\varepsilon_2+(\varepsilon_2+\Delta+\Delta^{(\frac 14+\frac{\alpha}{2})p})\Big) \leq  K_{11} \frac{T}{\Delta \log^4(1/\Delta) }.
    \end{align*}

\vskip 0.2cm 
\noindent     
    \textit{Case 2}: $[\frac{p_0}{2}] \leq   \frac{3}{2\alpha + 1} (1 + 2\alpha + 2m)$.
    
    Applying  Lemma \ref{localtime} with $f(x)=\frac{1}{(\text{max}\{\frac 32 \varepsilon_2, x\})^2},\varepsilon=\frac 32 \varepsilon_1$, for all $\Delta<\Delta_0$, we have
    \begin{align*}
    I_{2,2}&\leq \frac 94\mathbb E\Big[\displaystyle \int _{0}^T\frac{1}{\big[d(Y_{s},\Xi)\big]^2}1_{ \Xi^{\frac 32 \varepsilon_1}}(Y_{\underline{s}})ds\Big]
    \leq K_{12}T\displaystyle \int_{0}^{\frac 32 \varepsilon_1}\frac{1}{(\text{max}\{\frac 32 \varepsilon_2, x\})^2}dx+K_{12}T\Delta^{\frac 14 + \frac{\alpha}{2}}\frac{1}{\varepsilon_2^2}\\
    & \leq K_{13}T \frac{1}{\log^4(1/\Delta)} \Delta^{\alpha/2 - 7/4}.
    \end{align*}
    
 It follows from Lemma \ref{localtime} with $f(x) = 1$ and $\varepsilon = \varepsilon_2$, and \eqref{est:I3} that 
    \begin{align*}
        I_3    &\leq  K_{14} \frac{T}{\Delta\log^4(1/\Delta)}(\Delta^{\frac 14 + \frac{\alpha}{2}}+\Delta^2 \log^4(1/\Delta)) \leq K_{15} T\Delta^{\alpha/2-7/4}.
    \end{align*}
     When $\gamma<0$,  it follows from the uniform boundedness of the moment of $Y$ that the constants $(K_i)_{1\leq i \leq 15}$ does not depend on $T$.
    We conclude the proof.
    \end{proof}

\subsection{Proofs of Theorem \ref{existence1} and Theorem \ref{existence2}}
The following lemma is needed for the proofs of Theorem \ref{existence1} and Theorem \ref{existence2}.
\begin{Lem}\label{uniquess}
    Suppose that Assumptions (A1), (A2) and (A4) hold for some $m \geq 0$. Moreover,  there exists a constant $C>0$ such that for any solution $X$ of the equation \eqref{eqn1.1}, we have
    $$\sup\limits_{t\in[0,T]} \mathbb E[|X_t|^{2m\vee 1}]\leq C.$$
Then,  the equation \eqref{eqn1.1} has at most one  strong solution.
\end{Lem}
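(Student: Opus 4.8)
The plan is to establish \emph{pathwise uniqueness} via the Yamada--Watanabe approximation functions $\phi_{\delta\varepsilon}$ recalled in Section~\ref{Sec:3}, after which the assertion "at most one strong solution" is immediate. Let $X$ and $\tilde X$ be two strong solutions of \eqref{eqn1.1} driven by the same Brownian motion $W$ with $X_0=\tilde X_0=x_0$, and put $U_t:=X_t-\tilde X_t$. Since Assumption (A4) only controls $|\sigma(x)|$ by something of order $1+|x|^{m+\alpha+1/2}$, the stochastic integrals below need not be true martingales, so I would work up to the stopping times $\tau_N:=\inf\{t\ge 0:\ |X_t|\vee|\tilde X_t|\ge N\}$, which increase to $+\infty$ almost surely by pathwise continuity.

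First I would apply It\^o's formula to $\phi_{\delta\varepsilon}(U_{t\wedge\tau_N})$. The drift part $\phi'_{\delta\varepsilon}(U_s)\big(b(X_s)-b(\tilde X_s)\big)$ is handled exactly as in the proof of Theorem~\ref{convergence}: by (YW1) it equals $\tfrac{\phi'_{\delta\varepsilon}(|U_s|)}{|U_s|}U_s\big(b(X_s)-b(\tilde X_s)\big)$, so Assumption (A2) bounds it by $L_1\phi'_{\delta\varepsilon}(|U_s|)|U_s|$, and then (YW2)--(YW3) give the bound $L_1^+\big(\varepsilon+\phi_{\delta\varepsilon}(U_s)\big)$ with $L_1^+:=\max\{L_1,0\}$. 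The second-order term $\tfrac12\phi''_{\delta\varepsilon}(U_s)|\sigma(X_s)-\sigma(\tilde X_s)|^2$ is controlled by (YW5) and Assumption (A4): it does not exceed $\tfrac{L_3^2}{\log\delta}\big(1+|X_s|^m+|\tilde X_s|^m\big)^2|U_s|^{2\alpha}1_{[\varepsilon/\delta,\varepsilon]}(|U_s|)\le \tfrac{L_3^2\varepsilon^{2\alpha}}{\log\delta}\big(1+|X_s|^m+|\tilde X_s|^m\big)^2$. The stochastic integral stopped at $\tau_N$ has a bounded integrand (both solutions lie in $[-N,N]$ there and $\sigma$ is continuous), hence is a genuine martingale and drops out after taking expectations.

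Taking expectations and using Fubini--Tonelli to move them inside the time integral, I obtain, for $t\in[0,T]$ and with $g_N(t):=\mathbb E[\phi_{\delta\varepsilon}(U_{t\wedge\tau_N})]$,
\begin{equation*}
g_N(t)\ \le\ L_1^+T\varepsilon+L_1^+\int_0^t g_N(s)\,ds+\frac{L_3^2\varepsilon^{2\alpha}}{\log\delta}\int_0^t\mathbb E\big[(1+|X_s|^m+|\tilde X_s|^m)^2\big]\,ds .
\end{equation*}
This is the step where the hypothesis is used decisively: since $|x|^{2m}\le 1+|x|^{2m\vee 1}$, the bound $\sup_{s\le T}\mathbb E[|X_s|^{2m\vee1}]\le C$ (and the same for $\tilde X$) makes the last integrand bounded by a constant $C_1$ that does not depend on $N$. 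Gronwall's inequality then gives $g_N(t)\le\big(L_1^+T\varepsilon+C_1 L_3^2\varepsilon^{2\alpha}T/\log\delta\big)e^{L_1^+T}$, uniformly in $N$; letting $N\to\infty$ and applying Fatou's lemma (using continuity of $\phi_{\delta\varepsilon}$ and of $U$, together with $\tau_N\uparrow\infty$) yields the same bound for $\mathbb E[\phi_{\delta\varepsilon}(U_t)]$, and finally (YW3) gives
\begin{equation*}
\mathbb E[|U_t|]\ \le\ \varepsilon+\Big(L_1^+T\varepsilon+\frac{C_1 L_3^2\varepsilon^{2\alpha}T}{\log\delta}\Big)e^{L_1^+T},\qquad t\in[0,T].
\end{equation*}

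To conclude: if $\alpha>0$ I fix $\delta=2$ and let $\varepsilon\downarrow0$; if $\alpha=0$ I first let $\varepsilon\downarrow0$ and then $\delta\uparrow\infty$. In either case $\mathbb E[|U_t|]=0$ for every $t\in[0,T]$, hence $X_t=\tilde X_t$ almost surely; since $T$ is arbitrary and the trajectories are continuous, $X$ and $\tilde X$ are indistinguishable. This is pathwise uniqueness, and in particular equation \eqref{eqn1.1} has at most one strong solution. I expect the only genuinely delicate points to be the localization bookkeeping --- keeping the Gronwall estimate uniform in $N$ before passing to the limit --- and not forgetting the double limit needed in the borderline case $\alpha=0$; the remaining estimates are the routine Yamada--Watanabe computation already carried out for Theorem~\ref{convergence}.
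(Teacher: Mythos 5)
Your argument is correct and follows essentially the same route as the paper: Yamada--Watanabe approximation $\phi_{\delta\varepsilon}$, one-sided Lipschitz bound on the drift term via (YW1)--(YW2) and (A2), the $\varepsilon^{2\alpha}/\log\delta$ bound on the second-order term via (YW5), (A4) and the moment hypothesis, then Gronwall followed by $\varepsilon\downarrow 0$ (and $\delta\uparrow\infty$ when $\alpha=0$). The one genuine refinement you add is the localization via $\tau_N$ before taking expectations: the paper takes $\mathbb{E}[J_3]=0$ for the unstopped stochastic integral directly, which implicitly requires integrability of $\phi'_{\delta\varepsilon}(U_s)[\sigma(X_s)-\sigma(X'_s)]$ of an order ($2m+2\alpha+1$ moments) that is not literally guaranteed by the stated hypothesis $\sup_t\mathbb{E}[|X_t|^{2m\vee 1}]\le C$; your stopping-time argument plus the $N$-uniform Gronwall bound and Fatou's lemma closes that gap cleanly without needing any extra moment assumption.
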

\begin{proof}
Assume that $(X')$ is another solution of the equation \eqref{eqn1.1}, we will show that $\bE[|X_t - X'_t|] = 0$ for all $t \in [0,T]$, which implies the uniqueness of the solution. \\
By applying It\^o's formula for $\phi_{\delta\varepsilon}(X_{t} -X'_{t})$ and using Property (YW3), we have
\begin{align}
\left|X_{t} -X'_{t}\right|&\leq \varepsilon+\int_0^t \phi'_{\delta\varepsilon}\left(X_s -X'_s\right)\left[b(X_s)-b(X'_s)\right]ds \notag \\
&+\frac{1}{2}\int_0^t \phi''_{\delta\varepsilon}\left(X_s -X'_s\right)\left[\sigma(X_s)-\sigma(X'_s)\right]^2ds \notag \\
&+\int_0^{t} \phi'_{\delta\varepsilon}\left(X_s -X'_s\right)\left[\sigma(X_{s})-\sigma(X'_{s})\right]dW_s. \label{tag14}
\end{align}
Using Assumption (A2) and Properties (YW1), (YW2), we get 
\begin{align}\label{estimateoneside}
    \displaystyle \int _0^t \phi'_{\delta\varepsilon}(X_s-X'_s)[b(X_s)-b(X'_s)]ds=&\displaystyle \int _0^t \frac{\phi'_{\delta\varepsilon}(X_s-X'_s)}{X_s-X'_s}(X_s-X'_s)[b(X_s)-b(X'_s)]ds \\ \notag
    &\leq \displaystyle \int _0^t \frac{\phi'_{\delta\varepsilon}(|X_s-X'_s|)}{|X_s-X'_s|}L_1(X_s-X'_s)^2ds\\ \notag
    &\leq \displaystyle \int _0^t |L_1||X_s-X'_s|ds. \notag
\end{align}

Using Assumption (A4), Property (YW5), and Proposition \ref{nghiem dung 1}, we have
\begin{align*}
\mathbb E \bigg[\int_0^t &\phi''_{\delta\varepsilon}\left(X_s -X'_s\right)\left[\sigma(X_s)-\sigma(X'_s)\right]^2ds\bigg]
\leq  \frac{2\varepsilon^{2\alpha}}{\log \delta}\mathbb E\left[\displaystyle \int _0^t 3L_3+3L_3|X_s|^{2m}+3L_3|X'_s|^{2m}ds\right] \leq \frac{K_1\varepsilon^{2\alpha}}{\log \delta},
\end{align*}
where $K_1$  depend  on neither $\delta$ nor $\varepsilon$. Together with  \eqref{tag14} and \eqref{estimateoneside}, this implies 
$$\bE[\left|X_{t} -X'_{t}\right|]\leq \varepsilon+|L_1|\int_0^t \bE[ \left|X_{s} -X'_{s}\right|]ds+\frac{K_1\varepsilon^{2\alpha}}{\log \delta}.$$
By letting $\varepsilon\to 0,\delta \to \infty $, we get 
$$\bE[\left|X_{t} -X'_{t}\right|] \leq |L_1|\int_0^t \bE[\left|X_{s} -X'_{s}\right|]ds.$$
By Gronwall's inequality, we induce that
$\bE\left[\left|X_{t} -X'_{t}\right|\right]=0.$
 The proof is complete.
\end{proof}
\subsubsection*{Proof of Theorem \ref{existence1}}
 We will use the localization technique as in \cite{NL17}. For each $N>1$, set 
\begin{equation*}
    b_N(x)=
    \begin{cases}
        b(x)  &\text{ if } |x|\leq N,\\
        b\Big(\frac{Nx}{|x|}\Big)(N+1-|x|\Big)  &\text{ if }  N<|x|<N+1,\\
        0  &\text{ if }  |x|\geq N+1,
    \end{cases}
\end{equation*}
and
\begin{equation*}
    \sigma_N(x)=
    \begin{cases}
        \sigma(x) & \text{ if }  |x|\leq N,\\
        \Big(\sigma\Big(\frac{Nx}{|x|}\Big)-1\Big)(N+1-|x|)+1  & \text{ if }  N<|x|<N+1,\\
        1  &\text{ if } |x|\geq N+1.
    \end{cases}
\end{equation*}
It can verify that $b_N$ is bounded, and $\sigma_N$ is H\"older continuous and uniformly elliptic. 
Then the equation
$$X_N(t) = x_0 + \int_0^t b_N(X_s)ds+ \int_0^t \sigma_N(X_s)dW_s$$
has a unique strong solution.
 Moreover, we can verify that $xb_N(x)+\frac{p_0-1}{2}|\sigma_N(x)^2|\leq \gamma'|x|^2+\eta'$ for some constants $\gamma',\eta'$ depending only on  $\gamma,\eta$.
Hence, it follows from Lemma 3.1 in [LT19] that if $p_0\geq (l+4)\vee (2m+2\alpha+4)$ and $2\leq p\leq (p_0-(l\vee (2m+2\alpha)))/2$, then
\begin{equation*}
\mathbb E\Big[\sup\limits_{t\in [0,T]}|X_N(t)|^p\Big]\leq C(x,T,l,p,\gamma,\eta,m).
\end{equation*}
Using the argument in the proof of Theorem 3.1.i in \cite{NL17}, we can show that when $N\to \infty$, $X_N$ will converge in probability to a process $X$ which satisfies the equation \eqref{eqn1.1}. \\ 
It remains to prove the uniqueness of the solution. Let $\varphi$ be the function defined in Theorem \ref{convergence2}. Note that from the properties (P1)--(P3), $\varphi^{-1}$ exists and has bounded and Lipschitz continuous derivative. Let   $\overline b := \Psi\circ \varphi^{-1}$ and $\overline{\sigma}:=(\varphi'.\sigma)\circ\varphi^{-1}$. 
It can be verified that $\overline{b}$ and $\overline{\sigma}$ satisfy the following properties: for any $x, y \in \mathbb{R}$,
$$(x-y)(\overline{b}(x)-\overline{b}(y))\leq C(x-y)^2, $$ and
$$|\overline{\sigma}(x)-\overline{\sigma}(y)| \leq C|x-y|^{\alpha+1/2}(1+|x|^{m+1}+|y|^{m+1})$$
for some positive constant $C$.
Let $U_t:=\varphi(X_t)$. Using It\^o's formula and Property (P4), we obtain
\begin{equation} \label{pt:Z}
dU_t=\overline b(U_t)dt+\overline{\sigma}(U_t)dW_t.
\end{equation} 
It follows from Lemma \ref{nghiem dung 1} and Property (P2) that 
$$\mathbb E[|U_t|^{2m+2}]\leq C_1+C_1\mathbb E[|X_t|^{2m+2}]< +\infty. $$
Then it follows from Lemma \ref{uniquess} that there exists at most one solution to equation \eqref{pt:Z}. Since  $\varphi$ is strictly increasing, we obtain the uniqueness for solution of equation \eqref{eqn1.1}. This concludes the proof of Theorem \ref{existence1}.

\subsubsection*{Proof of Theorem \ref{existence2}}
For the proof of Theorem \ref{existence2}, we need the following result.
\begin{Lem}\label{existence2.1}
  Suppose that Assumptions (A1)--(A5) hold for  $m=l=\alpha=0$, and $b$ is bounded by $\|b\|_{\infty}<+\infty$.  Then the equation \eqref{eqn1.1} has a unique strong solution.
\end{Lem}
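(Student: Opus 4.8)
The plan is to obtain the strong solution from the Yamada--Watanabe principle: prove pathwise uniqueness and weak existence separately, then combine them; since both coefficients have at most linear growth the solution is non-exploding and lives on $[0,+\infty)$. Pathwise uniqueness comes essentially for free. Because $m=l=\alpha=0$, Assumptions (A1), (A2) and (A4) are precisely the standing hypotheses of Lemma \ref{uniquess} (with $2m\vee 1=1$), so the only remaining thing to verify is the a priori bound $\sup_{t\in[0,T]}\bE[|X_t|]<+\infty$ for an arbitrary solution $X$. This is immediate from Proposition \ref{nghiem dung 1}, since $\sigma$, being continuous, is bounded on compact sets; hence Lemma \ref{uniquess} yields pathwise uniqueness, and in fact consistency over increasing $T$.

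For weak existence I would regularise the drift. Choose bounded continuous functions $b_n$ with $\|b_n\|_{\infty}\le\|b\|_{\infty}$, with $b_n\equiv b$ on $(\Xi^{1/n})^c$, and with $b_n\to b$ pointwise on $\mathbb R\setminus\Xi$ — for instance linear interpolation on each $[\xi_i-1/n,\xi_i+1/n]$, which is legitimate since (A3) with $l=0$ makes $b$ Lipschitz, hence one-sidedly continuous, on each of the intervals $(\xi_i,\xi_{i+1})$, $(-\infty,\xi_1)$, $(\xi_k,+\infty)$. For each $n$ the coefficients $b_n,\sigma$ are continuous with at most linear growth (note that (A4) with $m=\alpha=0$ gives $|\sigma(x)|\le|\sigma(0)|+3L_3|x|^{1/2}$), so Skorokhod's existence theorem provides a non-exploding weak solution $X^n$. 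Assumption (A1) persists for each $b_n$ with $\gamma,\eta$ replaced by constants independent of $n$ (because $xb_n(x)\le\frac12 x^2+\frac12\|b\|_{\infty}^2$), whence $\sup_n\sup_{t\le T}\bE[|X^n_t|^p]<\infty$ for $p\le p_0$; together with the Burkholder--Davis--Gundy inequality and $p_0\ge2$ this gives $\bE[|X^n_t-X^n_s|^4]\le C|t-s|^2$, so $\{(X^n,W^n)\}_n$ is tight in $C([0,T])^2$. Passing to a subsequence and applying the Skorokhod representation theorem, $(X^n,W^n)\to(X,W)$ a.s.\ uniformly on $[0,T]$, and $\int_0^{\cdot}\sigma(X^n_s)\,dW^n_s\to\int_0^{\cdot}\sigma(X_s)\,dW_s$ by continuity of $\sigma$ and the uniform moment bounds.

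The crux is the convergence of the drift term. Write $\int_0^t b_n(X^n_s)\,ds=\int_0^t b(X^n_s)\,ds+\int_0^t[b_n-b](X^n_s)\,ds$; the correction is dominated by $2\|b\|_{\infty}\,\mathrm{Leb}\{s\le T:X^n_s\in\Xi^{1/n}\}$. Tanaka's formula gives $\bE[L^a_T(X^n)]\le\bE[|X^n_T-x_0|]+T\|b\|_{\infty}\le C$ uniformly in $a$ and $n$; since $\sigma(\xi_i)>0$ and $\sigma$ is continuous there are $\mu,\nu>0$ with $\sigma\ge\nu$ on $\Xi^{\mu}$, so for $n$ large the occupation times formula bounds $\bE[\mathrm{Leb}\{s\le T:X^n_s\in\Xi^{1/n}\}]$ by $\nu^{-2}\int_{\Xi^{1/n}}\bE[L^a_T(X^n)]\,da=O(1/n)$. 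A Fatou argument, squeezing $1_{\Xi^{\eta}}$ between continuous functions, transfers an analogous occupation bound to $X$ and shows $\mathrm{Leb}\{s\le T:X_s\in\Xi\}=0$ a.s., so $b$ is continuous at $X_s$ for a.e.\ $s$ and $\int_0^t b(X^n_s)\,ds\to\int_0^t b(X_s)\,ds$ a.s. Passing to the limit identifies $(X,W)$ as a weak solution of \eqref{eqn1.1}; combined with pathwise uniqueness, the Yamada--Watanabe theorem produces the unique strong solution. I expect this last step — the uniform-in-$n$ control of the time the approximating diffusions spend near the finite set $\Xi$ — to be the main obstacle, and it is precisely the two standing hypotheses of the lemma, boundedness of $b$ and non-degeneracy of $\sigma$ at the points $\xi_i$, that make it go through.
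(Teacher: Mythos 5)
Your proof is correct, and it is a genuinely different route from the paper's, though it rests on the same central technical idea. The paper works entirely at the level of \emph{strong} solutions: it regularises $b$ to a locally Lipschitz $b_N$ (the same linear interpolation on the $1/N$-neighbourhoods of $\Xi$ that you use), appeals to \cite{NL17} to get unique strong solutions $X_N$, and then shows $\{X_N\}$ is Cauchy in $L^1(\Omega)$ by applying It\^o's formula to $\phi_{\delta\varepsilon}(X_M-X_N)$, splitting the drift difference into the one-sided-Lipschitz part $b_M(X_M)-b_M(X_N)$ and the ``mismatch'' part $b_M(X_N)-b_N(X_N)$ supported on $\Xi^{1/M}$, and finishing with Gronwall. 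Your route instead factors through the Yamada--Watanabe principle: weak existence by tightness/Skorokhod for the (continuous, linear-growth) approximating coefficients, plus the pathwise uniqueness already supplied by Lemma~\ref{uniquess}. The technical crux — uniform-in-$N$ control of the occupation time of $\Xi^{1/N}$ via Tanaka's formula, boundedness of $b$, and the lower bound $\sigma\ge\nu$ on $\Xi^{\mu}$ — is identical in both arguments; the paper uses it inside the Cauchy estimate (to bound $J_{1,2}$) and again when passing to the limit, whereas you use it to make the drift term converge after the compactness step. What your version buys is that you never need strong well-posedness for the approximating SDEs (Skorokhod's theorem suffices), and the convergence-of-stochastic-integrals step becomes a standard consequence of the Skorokhod representation; what the paper's version buys is that it bypasses tightness, the representation theorem, and the abstract Yamada--Watanabe theorem, producing the strong solution directly as an $L^1$-limit of strong solutions on the original probability space. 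One minor point worth stating explicitly in your write-up: the dominated convergence used to pass $\int_0^t b(X^n_s)\,ds\to\int_0^t b(X_s)\,ds$ requires not only $\operatorname{Leb}\{s\le T:X_s\in\Xi\}=0$ a.s.\ but also that $b$ is continuous on $\mathbb R\setminus\Xi$, which follows from (A3) with $l=0$ as you note; and the correction term $\int_0^t[b_n-b](X^n_s)\,ds$ goes to zero in $L^1$ (hence along a subsequence a.s.), which is the form in which you should invoke it.
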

\begin{proof}
   For each $N> \frac{1}{\varepsilon_0}$, we define
    \begin{equation}
        b_N(x)=\begin{cases}
           & b(x) \quad \text{if} \quad d(x,\Xi)>\frac 1N,\\
           & b(\xi_i-\frac 1N)+\frac{ \big(x-\xi_i+\frac 1N\big)\Big(b(\xi_i+\frac 1N)-b(\xi_i-\frac 1N)\big)}{2/N} \quad \text{if}\quad d(x,\xi_i)\leq \frac 1N \text{ for some  } i\in \{ 1,\ldots,k\}. 
        \end{cases}
    \end{equation}

One can verify that $b_N$ is locally Lipschitz continuous, and there exists a constant $K$ not depending on $N$ such that 
\begin{equation}\label {bMbN}
   \sup_{x \in \Xi^{\varepsilon_0}} \Big( |b(x)-b_N(x)| + |b_M(x)-b_N(x)| \Big) \leq K, \text{ for any } M,N >  \frac{1}{\varepsilon_0},
\end{equation}
and 
\begin{equation}\label{onesideb}
(x-y)(b_N(x)-b_N(y))\leq |L_1| (x-y)^2, \text{ for any }  x,y \in \mathbb R , N> \frac{1}{\varepsilon_0}.
\end{equation}
From \eqref{bMbN} and Assumption (A3), we have
\begin{equation}\label{boundb1}
    |b_N(x)-b_N(y)|\leq L'_2+2K +3L'_2|x-y|.
\end{equation}
Also, from Assumption  (A1) and \eqref{bMbN},
\begin{equation}\label{boundb2}
    xb_N(x)+\frac{p_0-1}{2}\sigma^2(x)  \leq K \max\{ |\xi_{k}|, |\xi_1|\}+\eta+\gamma x^2.
\end{equation}
By using Theorem 3.1 in \cite{NL17}, the following SDE
$$X_N(t) = x_0 + \int_0^t b_N(X_N(s))ds+ \int_0^t \sigma(X_N(s))dW_s$$
has a unique strong solution.\\
For any $N> M>\frac {1}{\varepsilon_0}$ and $ u\leq T$, we have
$\phi_{\delta\varepsilon}(X_M(u)-X_N(u))=J_1+\frac 12 J_2+J_3$, where 
\begin{align*}
J_1 =  &\displaystyle \int_0^u\phi'_{\delta\varepsilon}(X_M(t)-X_N(t))(b_M(X_M(t))-b_N(X_N(t)))dt,\\
 J_2 = &\displaystyle \int_0^u \phi''_{\delta\varepsilon}(X_M(t)-X_N(t))[\sigma(X_M(t))-\sigma(X_N(t))]^2dt,\\
 J_3 = &\displaystyle \int_0^u \phi'_{\delta\varepsilon}(X_M(t)-X_N(t))[\sigma(X_M(t))-\sigma(X_N(t))]dW_t.
   \end{align*}
   First, by using Assumption  (A4), the Cauchy-Schwarz inequality, and Property (YW5), we have
   \begin{align*}
    \mathbb E[J_{2}]
    \leq \displaystyle\int_0^u  9L_3^2 \mathbb E\left[\frac{2}{ |X_M(t)-X_N(t)| \log \delta }1_{\{|X_M(t)-X_N(t)|\leq \varepsilon\}} |X_M(t)-X_N(t)|\right]dt     \leq   \frac{18L_3^2 T}{\log\delta}.
\end{align*}
Next, we write $J_1 = J_{1,1} + J_{1,2}$, where 
\begin{align*}
    J_{1,1} =& \displaystyle \int_0^u\phi'_{\delta\varepsilon}(X_M(t)-X_N(t))(b_M(X_M(t))-b_M(X_N(t)))dt,\\
    J_{1,2} = & \displaystyle \int_0^u\phi'_{\delta\varepsilon}(X_M(t)-X_N(t))(b_M(X_N(t))-b_N(X_N(t)))dt.
\end{align*}
Using \eqref{onesideb} and Properties (YW1) and (YW2),
\begin{align*}
  J_{1,1}    &=\displaystyle \int_0^u \frac{\phi'_{\delta\varepsilon}(X_M(t)-X_N(t)}{X_M(t)-X_N(t))}(X_M(t)-X_N(t))(b_M(X_M(t))-b_M(X_N(t)))dt\\
    \leq &\displaystyle \int_0^u \frac{\phi'_{\delta\varepsilon}(|X_M(t)-X_N(t)|)}{|X_M(t)-X_N(t))|} |L_1| (X_M(t)-X_N(t))^2dt \leq \displaystyle |L_1| \int_0^u  |X_M(t)-X_N(t)|dt.
\end{align*}
Note that for $N>M$, $b_N(x)=b_M(x)$ when $d(x,\Xi)>\frac 1M$. This implies that 
\begin{align*}
    \mathbb E[J_{1,2}]
    &\leq \displaystyle \int_0^u 2K \mathbb E\big(1_{\{X_N(t)\in \Xi^{1/M}\}}\big)dt=\sum_{i=1}^k\displaystyle \int_0^u 2K \mathbb E\big[1_{\{\xi_i-\frac 1M \leq X_N(t)\leq \xi_i+\frac 1M\}} \big]dt. 
\end{align*}
We will prove that there exists a constant $c$  not depending on $M, N$ such that for all $M,N\geq \frac{1}{\varepsilon_0}$,
\begin{equation}\label{localtimesigmaN}
\int_0^u\mathbb E\big[1_{\{\xi_i-\frac 1M \leq X_N(t)\leq \xi_i+\frac 1M\}} \big]dt \leq 
    \frac cM. 
\end{equation}
Indeed, thanks to Tanaka's formula, for all $a\in\mathbb R$ we have
$$|X_N(u)-a|=|x_0-a|+\displaystyle\int_{0}^u sgn(X_N(t)-a) b_N(X_N(t))ds+\displaystyle \int_0^u sgn(X_N(t)-a) \sigma(X_N(t))dW_t+L_u^a(X_N).$$
Hence
\begin{equation}
|L_u^a(X_N)| \leq |X_N(u)-x_0|+\left|\displaystyle\int_{0}^u sgn(X_N(t)-a)b_N(X_N(t))dt\right|+\left|\displaystyle \int_0^u sgn(X_N(t)-a)\sigma(X_N(t))dW_t\right|.
\end{equation}
By taking expectation on both sides of the above estimate, using Doob's inequality, Proposition \ref{nghiem dung 1}, Assumptions (A3), (A4), the estimates \eqref{boundb2} and \eqref{boundb1}, there exists a constant $c_1$ that does not depend on $N$ and $a$ such that
\begin{align*}
\mathbb E[|L_u^a(X_N)|]  
\leq & \mathbb E\left[\left|\displaystyle\int_{0}^u b_N(X_N(t))dt\right|\right]+\mathbb E\left[\left|\displaystyle \int_0^u \sigma(X_N(t))dW_t\right|\right]\\
&+\mathbb E\left[\left|\displaystyle\int_{0}^u sgn(X_N(t)-a)b_N(X_N(t))dt\right|\right]+\mathbb E\left[\left|\displaystyle \int_0^u sgn(X_N(t)-a)\sigma(X_N(t))dW_t\right|\right]\\
\leq & 2 \displaystyle\int_{0}^u \mathbb E\left[\left|b_N(X_N(t))\right|\right]dt+2 \left[\left|\displaystyle\int_{0}^u \mathbb E\left[\sigma^2(X_N(t))\right]dt\right|\right]^{1/2} \\
\leq & 2 \displaystyle\int_{0}^T \mathbb E\left[\left|b_N(X_N(t))\right|\right]dt+2 \left[\left|\displaystyle\int_{0}^T \mathbb E\left[\sigma^2(X_N(t))\right]dt\right|\right]^{1/2} 
\leq c_1.
\end{align*}
By using the occupation time formula, we obtain
\begin{align}\notag
    \mathbb E\Big[\displaystyle \int_{0}^{u} 1_{[\xi_i-\frac 1M,\xi_i+\frac 1M]}(X_N(t))dt\Big] & \leq \frac{1}{\nu^2} \mathbb E\Big[\displaystyle \int_{0}^{u} 1_{[\xi_i-\frac 1M,\xi_i+\frac 1M]}(X_N(t))\sigma^2(X_N(t))dt\Big]\\\notag
    &=\frac{1}{\nu^2}\displaystyle \int_{-\infty}^{+\infty} 1_{[\xi_i-\frac 1M,\xi_i+\frac 1M]}(a)\mathbb E[L^a_u(X_N)] da\leq \frac{c_1}{\nu^2}\frac 2M = \frac{c}{M}, 
\end{align}
where $c = \frac{2c_1}{\nu^2}.$ Note that  $\mathbb E[J_3]=0$.  In summary, by the Property (YW3),
\begin{align*}
\mathbb E\big[|X_M(u)-X_N(u)| \big]  & \leq\varepsilon+ \mathbb E\big[\phi_{\delta\varepsilon}(X_M(u)-X_N(u))\big] \\
&\leq \varepsilon + \frac{2kKc}{M} +\frac{18L_3^2 T }{\log\delta}+ |L_1| \displaystyle \int_0^u \mathbb E\big[|X_M(t)-X_N(t)|\big]dt.
\end{align*}
By letting $\varepsilon \to 0$ and $\delta \to \infty$, we get 
\begin{align*}
\mathbb E\big[|X_M(u)-X_N(u)| \big] \leq &  \frac{2kKc}{M} + |L_1| \displaystyle \int_0^u \mathbb E\big[|X_M(t)-X_N(t)|\big]dt.
\end{align*}
Using Gronwall's inequality, we have 
\begin{equation*}
   \mathbb E[|X_M(u)-X_N(u)|]\leq  \frac{2kKc}{M}e^{|L_1|u} \text{ for all } N > M > \frac{1}{\varepsilon_0}.
\end{equation*}
Then $X_M(u)$ is a Cauchy sequence in $L^1(\Omega)$, and it converges to a random variable $X(u)$. By letting $N\rightarrow +\infty$, it follows from Fatou's lemma that  
\begin{equation*}
    \mathbb E[|X(u)-X_M(u)|] \leq \frac{2kKc}{M}e^{|L_1|u},\text{ for all } u \leq T.
\end{equation*}
From here, by using Assumption (A4), H\"older's inequality and Young's inequality, we have 
\begin{align}
    &\mathbb E\left[\left(\displaystyle\int_0^u \sigma(X_N(t))dW_t-\displaystyle\int_0^u \sigma(X(t))dW_t\right)^2\right]
    =\mathbb E\left[\displaystyle\int_0^u \left(\sigma(X_N(t)-\sigma(X(t))\right)^2dt\right] \notag \\
    \leq &9L_3^2 \mathbb E\left[\displaystyle\int_0^u |X_N(t)-X(t)|dt\right] \leq \frac{18L_3^2 kKcu}{N} e^{|L_1|u}. \label{Long1} 
\end{align}
Next, from \eqref{localtimesigmaN}, we obtain
$$\mathbb E\left[\displaystyle\int_0^u |b(X_N(t))-b_N(X_N(t)|dt\right]\leq  K\sum\limits_{i=1}^k\mathbb E\Big[\displaystyle \int_{0}^{u} 1_{[\xi_i-\frac 1N,\xi_i+\frac 1N]}(X_N(t))dt\Big]\leq \frac {Kkc}{N}.$$
For $N>M$, we write $\mathbb E\left[\displaystyle\int_0^u |b(X_t)-b(X_N(t))| dt \right]  = I_1 + I_2$, where 
\begin{align*}
 I_1=&\mathbb E\left[\displaystyle\int_0^u |b(X_t)-b(X_N(t))|1_{\{X_t\in \Xi^{1/2M}\}}dt\right],\\
 I_2 =& \mathbb E\left[\displaystyle\int_0^u |b(X_t)-b(X_N(t))|1_{\{X_t \notin \Xi^{1/2M}\}}dt\right].
\end{align*}
From \eqref{localtimesigmaN}, by letting $N\rightarrow \infty$, 
\begin{equation}\notag
\mathbb E\Big[\displaystyle \int_{0}^{u} 1_{[\xi_i-\frac {1}{2M},\xi_i+\frac {1}{2M}]}(X(t))dt\Big]  \leq\limsup\limits_{N\rightarrow\infty}
    \mathbb E\Big[\displaystyle \int_{0}^{u} 1_{(\xi_i-\frac {1}{M},\xi_i+\frac {1}{M})}(X_N(t))dt\Big] \leq \frac{c}{M},
\end{equation}
implying that
\begin{equation}\label{localtimeX}
\mathbb E\Big[\displaystyle \int_{0}^{u} 1_{\{X_t\in\Xi^{1/2M}\}}dt\Big] \leq  \frac{ck}{M}.
\end{equation}
By using the boundedness of $b$ and \eqref{localtimeX}, we have
\begin{equation}\notag
    I_1 \leq 2\|b\|_{\infty}\displaystyle \int_0^u\mathbb E[1_{\{X_t\in\Xi^{1/2M}\}}]\leq 2\|b\|_{\infty}\frac{ck}{M}.
\end{equation}
To estimate $I_2$, we write 
\begin{align*}
    I_2  
    &=\mathbb E\left[\displaystyle\int_0^u |b(X_t)-b(X_N(t))|1_{\{X_t \notin \Xi^{1/2M}\}}1_{\{(X_t,X_N(t))\in S\}}dt\right]\\
    &\quad +\mathbb E\left[\displaystyle\int_0^u |b(X_t)-b(X_N(t))|1_{\{X_t \notin \Xi^{1/2M}\}}1_{\{(X_t,X_N(t))\notin S\}}dt\right]\\
   & \leq 3L_2 \displaystyle \int_0^u\mathbb E[|X(t)-X_N(t)|]dt+2\|b\|_{\infty} \displaystyle \int_0^u \mathbb E[|1_{\{|X_t-X_N(t)|>1/2M\}}]dt\\
   & \leq 3L_2 \displaystyle \int_0^u\mathbb E[|X(t)-X_N(t)|]dt+4\|b\|_{\infty}M \displaystyle \int_0^u \mathbb E[|X_t-X_N(t)|]dt\\
    & \leq (3L_2 + 4M\|b\|_\infty) \frac{2kKc}{N}e^{|L_1|u}.
\end{align*}
By choosing $N=M^2\rightarrow \infty$, we have 
$$\lim_{N \to \infty} \mathbb E\left[\displaystyle\int_0^u |b(X(t))-b_N(X_N(t)|dt\right] = 0.$$
Together with \eqref{Long1}, this concludes the existence of solution to equation \eqref{eqn1.1}. The uniqueness of solution is obtained by using Lemma \ref{uniquess}.
\end{proof}
Now we are ready to prove Theorem \ref{existence2}. For any $N>\max\{|\xi_1|,|\xi_k|\}$, let's denote 
\begin{equation*}
    b_N(x)=
    \begin{cases}
        b(x) & \text{ if }  |x|\leq N,\\
        b\Big(\frac{Nx}{|x|}\Big)(N+1-|x|\Big)  & \text{ if }  N<|x|<N+1,\\
        0  & \text{ if }  |x|\geq N+1,
    \end{cases}
\end{equation*}
and
\begin{equation*}
    \sigma_N(x)=
    \begin{cases}
        \sigma(x)  & \text{ if } |x|\leq N,\\
        \sigma\Big(\frac{Nx}{|x|}\Big)(N+1-|x|)   & \text{ if }  N<|x|<N+1,\\
        0  & \text{ if }  |x|\geq N+1.
    \end{cases}
\end{equation*}
One can check that $b_N$ and $\sigma_N$ satisfy all conditions of Lemma \ref{existence2.1}, with the note that a compactly supported $(\alpha+1/2)$-H\"older continuous function is also a $1/2$-H\"older continuous function.
Then the equation 
\begin{equation*}
		\begin{cases} 
			& d X_N(t)=b_N(X_t)dt+\sigma_N(X_t)dW_t \\
			& X^N_0=x_0 \in \mathbb R
		\end{cases},
	\end{equation*}
 has a unique strong solution.
 Moreover, we can verify that $xb_N(x)+\frac{p_0-1}{2}|\sigma_N(x)^2|\leq \gamma'|x|^2+\eta'$ for some constant $\gamma',\eta'$ which depend only on  $\gamma,\eta$.
Hence, it follows from Lemma 3.1 in \cite{NL19} that if $p_0\geq (l\vee m)+4$ and $2\leq p\leq (p_0-(l\vee m))/2$, then
$$\mathbb E\Big[ \sup\limits_{t\in [0,T]}|X^N_t|^p\Big] \leq C(x,T,l,p,\gamma,\eta,m).$$
Again, by following the argument in the proof of Theorem 3.1.i in \cite{NL17}, we can show that  $X_N$ will converge in probability to a process $X$ which satisfies equation \eqref{eqn1.1}. 

The uniqueness of solution is obtained by using Lemma \ref{uniquess}. We conclude the proof.

\section{Numerical analysis}
\label{Sec:4} 
In this section, we conduct the proposed algorithm for several SDEs to support our theory. Recall that if $X$ is the unique strong solution of the SDE \eqref{eqn1.1} and $Y^\Delta$ is our approximation scheme corresponding to the step-size parameter $\Delta$, then under the conditions stated in Theorem \ref{convergence} and Theorem \ref{convergence2}, for $\alpha \in (0,1/2],$ we have
\begin{equation*} 
	\sup\limits_{0\leq t\leq T} \bE \left[  |Y_t^\Delta - X_t| \right] \le 
	C \Delta^{\alpha} 
	\end{equation*}
for some $C\in(0, \infty)$ and for all $\Delta\in(0,\Delta_0)$. Hence, by using the triangle inequality, we obtain
\begin{equation*}
\sup\limits_{0\leq t\leq T} \bE \bigl[ |{Y_t}^{2^{-(k+1)}\Delta_0} - {Y_t}^{2^{-k}\Delta_0}|\bigr]\leq C 2^{- k\alpha}
\end{equation*}
for some positive constant $C$.

If we denote $r_k := |{Y_t}^{2^{-(k+1)}\Delta_0} - {Y_t}^{2^{-k}\Delta_0}|$, then we can use $\widetilde r_k := \frac{1}{N_k}\sum_{n=1}^{N_k}r_{k,n}$ as an approximation for $\bE[r_k]$, in which $r_{k,n}$ is the $n^{th}$ numerical path of $r_k$ for all $n = 1, 2,..., N_k$.
The main issue here is that the simulations of the fine and coarse paths in the formula of $r_k$ have to share the same driving Brownian motion during the overlap time interval. To do this, we adapt a simulating method from Fang and Giles (see \cite{FG}).

To determine the convergence rate $\alpha$ for each numerical example, observe that since
\begin{equation*}
    \widetilde r_k \leq c  2^{- k \alpha}
\end{equation*} 
when the approximation $\widetilde r_k$ is relatively good, i.e.\ when $N_k$ is sufficiently large, we also have
\begin{equation*}
    \log\widetilde r_k \leq \log c -  \alpha(k \log 2 ).
\end{equation*}
Therefore, we can estimate the rate $\alpha$  by computing the slope of the regression line for the pairs $(-k \log 2, \log\widetilde r_k)$, i.e. it should be no more than an empirical rate $\Tilde{\alpha}$ which satisfies $- \log\widetilde r_k = -\log c + \Tilde{\alpha}(k \log 2 ) + o(1).$

In Theorem \ref{cost analysis}, we have provided the rate of the computation cost $\zeta$ satisfying $\bE[N_T] \leq CT\Delta^\zeta$ for our scheme. This theoretical rate can also be estimated by an empirical rate $\Tilde{\zeta}$ in the same way as above.

The two SDEs that we consider in Table \ref{tab:1} satisfy the conditions in Theorem \ref{convergence}, while those in Table \ref{tab:2} satisfy the conditions provided in Theorem \ref{convergence2}. For each example, we compute $\widetilde r_k$ for $k = 1,2,3,4,5$ with $\Delta_0 = 1.8\cdot10^{-4}$. The number of samples is $N_k = 10^3$ for $k = 1,2,3,4$. The results for the case $T=1$ are summarized in Table \ref{tab:3}.

\begin{center}
	\begin{table}[!ht]
		\begin{center}
			\begin{tabular}{|c|c|c|c|c|c|c|c|c|c|}
				\hline
				Example
				&
				$b(x)$
				&
			$\sigma(x)$
				&
			 $x_0$
    &
    $p_0$
            &
            $l$
            &
            $m$
            &
            $\alpha$
            &
            $\gamma$
            &
            $L_1$
				\\
				\hline
				1
				&
				$
		\begin{cases} 
			-x-x^3 &\text{ if } x\geq0, \\
 		1- x - x^3 &\text{ if } x<0. \\  
		\end{cases}
$
				&
				$
		(1+x)(1+x^{2/3})1_{[-1,\infty)}(x)
$
				&
			 0
            &
            20
            &
            2
            &
            1
            &
            $\dfrac{1}{6}$
            &
            -1
            &
            -1
				\\
    \hline
    				2
				&
	$			\begin{cases} 
			-1+x-x^3 &\text{ if } x\geq0, \\
 		x - x^3 &\text{ if } x<0. \\  
		\end{cases}
  $
				&
					$
		(1+x)(1+x^{2/3})1_{[-1,\infty)}(x)
$
				&
			 0
            &
            20
            &
            2
            &
            1
            &
            $\dfrac{1}{6}$
            &
            1
            &
            1
				\\
    \hline
			\end{tabular}
		\end{center}
		\caption{\small Examples of SDEs satisfying the conditions in Theorem \ref{convergence}.
			\label{tab:1}}
	\end{table}
\end{center}

\begin{center}
	\begin{table}[!ht]
		\begin{center}
			\begin{tabular}{|c|c|c|c|c|c|c|c|c|c|}
				\hline
				Example
				&
				$b(x)$
				&
			$\sigma(x)$
				&
			 $x_0$
                &
              $p_0$
            &
            $l$
            &
            $m$
            &
            $\alpha$
            &
            $\gamma$
            &
            $L_2$
				\\
				\hline
    		3
				&
$
		\begin{cases} 
			1+x-x^3 &\text{ if } x>2, \\
			x^2+1 &\text{ if } 0 \leq x \leq 2, \\
 		x - x^3 &\text{ if } x<0. \\  
		\end{cases}
$
				&
$
		1+\sqrt{\dfrac{x^4+x^{4/3}}{14}}
$
				&
			 0.2
                &
              26
            &
            2
            &
            1
            &
            $\dfrac{1}{6}$
            &
           -1
            &
            -1
				\\
				\hline
			4
				&
$
		\begin{cases} 
			1+x-x^{2/3} &\text{ if } x>2, \\
			x^2+1 &\text{ if } 0 \leq x \leq 2, \\
 		x &\text{ if } x<0. \\  
		\end{cases}
$
				&
			$1+x^{2/3}$
				&
			 0
                &
              20
            &
            1
            &
            $\dfrac{4}{3}$
            &
            $\dfrac{1}{6}$
            &
            1
            &
            1
				\\
				\hline
			\end{tabular}
		\end{center}
		\caption{\small Examples of SDEs satisfying the conditions in Theorem \ref{convergence2}.
			\label{tab:2}}
	\end{table}
\end{center}

\begin{center}
	\begin{table}[!ht]
		\begin{center}
			\begin{tabular}{|c|c|c|c|c|}
				\hline
				Example
				&
	$\Tilde{\alpha}$
				&
			$95\%$ CI for $\Tilde{\alpha}$
				&
    $\Tilde{\nu}$
			 	&
			$95\%$ CI for $\Tilde{\zeta}$
				\\
				\hline
				1
				&
				0.557
				&
			$[0.422, 0.692]$
				&
			 -1.131
			 &
			$[-1.286, -0.977]$
				\\	
             \hline
				2
				&
				0.534	

				&
			$[0.376, 0.692]$
				&
			 -1.139	
			 &
			$[-1.279, -0.999]$
				\\	
            \hline
				3
				&
				0.601
				&
			$[0.463, 0.739]$
				&
			 -1.252
			 &
			$[-1.375, -1.129]$
				\\
				\hline
    				3
				&
				0.608

				&
			$	 [0.485, 0.731]$
				&
			 	-1.094	
			 &
			$[-1.137, -1.052]$
				\\
				\hline
			\end{tabular}
		\end{center}
		\caption{\small Estimation for the rates $\alpha$ and $\zeta$ in Example 1,2, 3 and 4 for $T = 1$. 
			\label{tab:3}}
	\end{table}
\end{center}
In these examples, for $T=1$, the empirical rate of convergence is larger than the theoretical rate $1/6$,  and the empirical rate of computation cost is almost the same as the theoretical rate $-1$. Hence, the numerical results support the theoretical findings. 

In addition, since $\gamma < 0$ in Example 1 and 3, the constant $C$ in the estimates \eqref{EXmu-X1} and \eqref{EXmu-X2} does not depend on $T$. Hence, the intercept of the regression line for the pairs $(-k \log 2, \log\widetilde r_k)$ should not change drastically when the time $T$ is adjusted. This can be verified when we compare the values of this intercept in the case $T=1$ and $T=5$ for both examples. The results are shown in Table \ref{tab:4}.

\begin{center}
	\begin{table}[!ht]
		\begin{center}
			\begin{tabular}{|c|c|c|}
				\hline
                Example
				&
				$T=1$
				&
			$T=5$
   \\
\hline
                1
				&
				-4.390
				&
			-4.184
   \\
\hline
                3
				&
				-1.925
				&
			-1.878
   \\
\hline
			\end{tabular}
		\end{center}
		\caption{\small The intercepts of the regression line for the pairs $(-k \log 2, \log\widetilde r_k)$ in Example 1 and 3. 
			\label{tab:4}}
	\end{table}
\end{center}

It has been shown that when $\gamma < 0$, the constant $C$ in Theorem \ref{cost analysis} does not depend on $T$ either. To demonstrate this, we note that the intercept of the regression line in the estimation of $\zeta$ is $\log C + \log T$. The values of this intercept for Example 1 in the case $T=1$ and $T=5$ are 7.082 and 7.675, respectively. We can see that $7.645 - 7.082 \approx \log 5$, which suggests that the constant $C$ does not depend on $T$.


\end{document}